\newtheorem{thm}{Theorem}
\newtheorem{cor}[thm]{Corollary}
\newtheorem{lem}[thm]{Lemma}
\newtheorem{prop}[thm]{Proposition}
\newtheorem{df}[thm]{Definition}
\theoremstyle{definition}
\newtheorem{algo}[thm]{Algorithm}
\numberwithin{equation}{section}
\newcommand{\Aon}[1][n]{\ensuremath{\Ao_{n}}}
\newcommand{\abs}[1]{\ensuremath{\left|#1\right|}}
\newcommand{\asdefined}{\mathrel{=:}}
\newcommand{\bi}[1][\grid]{\mathfrak{B}_{#1}}
\newcommand{\bilin}[3][\grid]{\ensuremath{\bi[#1][#2,\,#3]}}
\newcommand{\Cleq}{\ensuremath{\lesssim}}
\newcommand{\CIPG}{$\textsf{C}^0\textsf{IPG}$\xspace}
\newcommand{\ACIPGM}{\textsf{A}$\textsf{C}^0\textsf{IPG}$\textsf{M}\xspace}
\newcommand{\definedas}{\mathrel{:=}}
\newcommand{\dual}[2]{\ensuremath{\left\langle #1,\,#2\right\rangle}}
\newcommand{\dx}{\ensuremath{\,\mathrm{d}x\xspace}}
\newcommand{\ds}{\ensuremath{\,\mathrm{d}s\xspace}}
\newcommand{\DG}[1][\grid]{\ensuremath{D_{\!\texttt{pw}}}}
\DeclareMathOperator{\divo}{div}
\newcommand{\DeltaG}[1][\grid]{\ensuremath{\Delta_{\!\texttt{pw}}}}
\newcommand{\DOFs}[1][\grid]{\ensuremath{\mathcal{N}_{#1}}}
\newcommand{\cDOFs}[1][\elm]{\ensuremath{\mathcal{N}}_{#1}^{\texttt{HTC}}}
\newcommand{\elm}{\ensuremath{K}\xspace}
\newcommand{\enorm}[2][\grid]{\left|\negthinspace\left|\negthinspace\left|{#2}%
                      \right|\negthinspace\right|\negthinspace\right|_{#1}}
\newcommand{\est}{\eta}
\newcommand{\ESTIMATE}{\textsf{ESTIMATE}}
\newcommand{\ovf}[0]{\Pi_0 f}
\newcommand{\grids}{\ensuremath{\mathbb{G}}\xspace}
\newcommand{\grid}{\mathcal{T}}
\newcommand{\gridk}[1][k]{\grid_{#1}}
\newcommand{\helm}[1][\ell]{\ensuremath{h_{\elm}}}
\newcommand{\hk}[1][k]{\ensuremath{h_{#1}}}
\newcommand{\hG}[1][\grid]{\ensuremath{h_{#1}}}
\newcommand{\ipol}{\mathcal{I}}
\newcommand{\ipolk}[1][k]{\ipol_{#1}}
\newcommand{\Cipolk}[1][k]{\mathcal{E}_{#1}}
\newcommand{\Gipolk}[1][k]{{\ipol}_{#1}}
\newcommand{\id}{\operatorname{id}}
\newcommand{\jump}[1]{\left[\negthinspace\left[{#1}\right]\negthinspace\right]}
\newcommand{\jumpn}[2]{\jump{{\partial^{#1}_n {#2}}}}
\newcommand{\meann}[2]{\mean{{\partial^{#1}_n {#2}}}}
\DeclareMathAlphabet{\lf}{OT1}{pzc}{m}{it}
\newcommand{\liftG}[1][k]{\mathcal{L}_{#1}}
\newcommand{\mean}[1]{\left\{ \kern -1.2mm \left\{ {#1}  \right\} \kern -1.2mm\right\}}
\newcommand{\marked}{\mathcal{M}}
\newcommand{\MARK}{\textsf{MARK}\xspace}
\newcommand{\nablaG}[1][\grid]{\ensuremath{\nabla_{\!\texttt{pw}}}}
\newcommand{\nablak}[1][k]{\ensuremath{\nabla_{k}}}
\newcommand{\N}{\ensuremath{\mathbb{N}}}
\newcommand{\neighk}[1][k]{\ensuremath{{N}_{#1}}}
\newcommand{\neighG}[1][\grid]{\ensuremath{{N}_{#1}}}
\newcommand{\nodes}[1][\grid]{\ensuremath{\mathcal{Z}_{#1}}}
\newcommand{\cnodes}[1][\grid]{\ensuremath{\mathcal{Z}_{#1}^{\texttt{HTC}}}}
\newcommand{\normal}{\ensuremath{\vec{n}}}
\newcommand{\norm}[2][\Omega]{\ensuremath{\left\|#2\right\|_{#1}}}
\newcommand{\snorm}[2][\Omega]{\ensuremath{\left|#2\right|_{#1}}}
\newcommand{\ovu}{ \overline u_\infty}
\DeclareMathOperator{\osc}{osc}
\providecommand{\Poincare}{{Poincar{\'e}}\xspace}
\renewcommand{\P}{\ensuremath{\mathbb{P}}}
\renewcommand{\paragraph}[1]{\noindent\raisebox{0pt}[10pt][0pt]{\textbf{#1.}}}
\newcommand{\pn}[1]{\partial_n {#1} }
\newcommand{\polspace}[1][2]{\ensuremath{\P}_{#1}} 
\newcommand{\cpolspace}[1][4]{\ensuremath{\hat\P}_{#1}} 
\newcommand{\REFINE}{\textsf{REFINE}\xspace}
\newcommand{\R}{\ensuremath{\mathbb{R}}}
\newcommand{\scp}[3][\Omega]{\ensuremath{\left\langle #2,\,#3\right\rangle}_{#1}}
\newcommand{\set}[1]{\left\{#1\right\}}
\newcommand{\side}{\ensuremath{F}\xspace}
\newcommand{\sides}{\begingroup \mathcal{F}\endgroup}
\newcommand{\sidesk}[1][k]{\begingroup \mathcal{F}_{#1}\endgroup}
\newcommand{\insides}[1][k]{{\begingroup\mathring{\mathcal{F}_{#1}}\endgroup}}
\newcommand{\skeletonk}[1][k]{\ensuremath{\Gamma_{#1}}}
\newcommand{\inskeletonk}[1][k]{\ensuremath{\mathring\Gamma_{#1}}}
\newcommand{\step}[1]{\noindent\raisebox{1.5pt}[10pt][0pt]{\tiny\framebox{$#1$}}\xspace}
\newcommand{\SOLVE}{\textsf{SOLVE}}
\DeclareMathOperator{\supp}{supp}
\renewcommand{\vec}[1]{\ensuremath{\boldsymbol{#1}}}
\newcommand{\V}{\ensuremath{\mathbb{V}}}
\newcommand{\VG}[1][\grid]{\V(#1)}
\newcommand{\wconv}{\rightharpoonup}
\begin{document}



\title[Convergence of an adaptive~\CIPG~method]{Convergence of an adaptive
  $C^0$-interior penalty \\ Galerkin method for the biharmonic problem}

\author[A.~Dominicus]{Alexander Dominicus}
\address{Corresponding Author: Alexander Dominicus,
 Fakult\"at f\"ur Mathematik,
 TU Dortmund University, 
 Vogelpothsweg 87, D-44227 Dortmund, Germany
 }%
 \email{alexander.dominicus@tu-dortmund.de}

\author[F.~Gaspoz]{Fernando Gaspoz}

\author[Ch.~Kreuzer]{Christian Kreuzer}


\keywords{Adaptive discontinuous Galerkin methods, quadratic
  C0-interior penalty method, convergence, biharmonic
  problem}

\subjclass[2010]{65N30, 65N12, 65N50, 65N15}

\maketitle
\begin{abstract}
We develop a basic convergence analysis for an adaptive 
\CIPG method
for the Biharmonic problem which provides convergence without
rates for all practically relevant marking strategies and all penalty
parameters assuring coercivity of the method. The analysis hinges on
 embedding properties of (broken) Sobolev and BV
 spaces, and the construction of a suitable  limit space.
 In contrast to the convergence result of adaptive
discontinuous Galerkin methods for elliptic PDEs, by Kreuzer and Georgoulis
(\cite{KreuzerGeorgoulis:17}), here we have to deal with the fact that
the Lagrange finite element spaces may possibly contain no proper
$C^1$-conforming subspace. This prevents from a straight forward
generalisation and requires the development of some new
key technical tools. 
\end{abstract}
\section{Introduction}\label{sec:introduction}

We develop here a basic convergence analysis for an adaptive $C^0$-interior
penalty method (\ACIPGM) for fourth order boundary value problems.
Let $\Omega \subset \R^2$ be a bounded polygonal domain with
Lipschitz boundary. For the ease of presentation we restrict
ourselves to the Biharmonic problem 
\begin{align}\label{eq:Biharmonic_strong}
    \Delta ^2 u =f \quad \text{in }\Omega, \quad\text{and}\quad
u=\frac{\partial u}{\partial \normal_\Omega}=0  \quad \text{on }\partial \Omega,
\end{align}
where $f \in L^2(\Omega)$ and
$\normal_\Omega$ denotes the outer normal on $\partial\Omega$. 
However, we emphasise that the presented techniques
also apply to more general
fourth order problems.  

Conforming discretisations of fourth order problems require 
$C^1$-elements
\cite{Argyris1968:The-TUBA-family,Ciarlet1974:Sur-lelement-de,DouglasDupontPercellScott:79}, 
which are typically very cumbersome to implement since they require
polynomial degree $\ge 5$ in $2d$ or constructions via
macrotriangulations. For 
this reason, mixed (see
e.g. \cite{Boffi2013:Mixed-finite-el,Ciarlet1974,
  Johnson1973:On-the-converge})
and non-conforming methods
(e.g. \cite{Bazeley1965:0.-C.-Zienkiewi,Morley1968:The-triangular-})
gained attraction. In this work, we consider the non-conforming so-called
$C^0$-interior penalty Galerkin discretisation (\CIPG)
of~\eqref{eq:Biharmonic_strong}. This method uses standard continuous Lagrange
finite elements of order $\ge 2$. Consistency is ensured and 
jumps of the normal derivatives across element interfaces are
penalised. For a thorough
introduction to $C^0$-interior penalty methods see
e.g. \cite{BrennerSung:05,EngelGarikipati:2002,HansboLarson:2002}.
A posteriori error estimators for the \CIPG~method were developed
in~\cite{GeorgoulisHoustonVirtanen:09,BrennerGudiSung2009} and
can be used to design an \ACIPGM based on the standard loop
\begin{align}\label{loop:SEMR}
  \SOLVE \to \ESTIMATE \to \MARK \to \REFINE.
\end{align}
The convergence theory, however, turns out to be a particular
challenging task for two reasons. First, the presence 
of the negative power of the mesh-size $h$ in the
discontinuity penalisation term. 
Second, the analysis of the \CIPG~method
suffers additionally from the fact that, in general, no conforming subspace with
proper approximation properties is available unless the polynomial
degree exceeds e.g. $4$ in $2d$; compare with
\cite{Boor1983:Approximation-b,GiraultScott:2002}.

The first issue also appears in adaptive discontinuous Galerkin
methods for $2$nd order problems. Here,
resorting to D\"orflers marking strategy, error
reduction~\cite{KarakashianPascal:07,HoppeKanschatWarburton:2008} 
and even optimal
convergence rates~\cite{BonitoNochetto:10}  of adaptive schemes are available.
These results generalise the ideas for conforming methods in
\cite{Doerfler:96,MoNoSi:00,Cascon2008:Quasi-optimal-c} 
based on the observation that the penalty is dominated by the
`conforming parts' of the estimator provided the penalisation
parameter is chosen sufficiently large. 
This idea was taken up in~\cite{Fraunholz:2015} in an
attempt to prove convergence of the \ACIPGM
for the biharmonic problem~\eqref{eq:Biharmonic_strong}, although the
resulting argument is unclear to hold: For example, it appears to us that,
in the proof of
the crucial estimator reduction property~\cite[Lemma
4.1]{Fraunholz:2015}, the possible increase of the penalty due to
refinement is not properly taken into account. 
However, there are 
generalisations of~\cite{BonitoNochetto:10} for the Hellan-Hermann-Johnson element
\cite{Huang2011:Convergence-of-} and a hybridisable $C^0$-discontinuous
Galerkin method~\cite{Sun2018:Quasi-optimal-c} where no negative power
of the mesh-size is present; compare also with the discussion
in~\cite{Cockburn2016:Contraction-pro}.

Very recently in \cite{KreuzerGeorgoulis:17} (c.f. also
\cite{KreuzerGeorgoulis:19})
the basic
convergence results for conforming adaptive finite element methods
\cite{MorinSiebertVeeser:08,Siebert:11} have been extended to adaptive
discontinuous Galerkin methods 
for $2$nd order problems. The result utilises a newly developed space limit of
the discrete space sequence created by the adaptive
loop~\eqref{loop:SEMR}.
Replacing Cea's Lemma in \cite{MorinSiebertVeeser:08}
by a version of the medius analysis of Gudi
\cite{Gudi:10} adapted to the limit space
yields convergence of discrete approximations 
to the weak solution in the limit space. Coincidence with the exact
solution follows thanks to properties of the marking strategy.
The result is neither restricted to
symmetric problems and discretisations nor to a particular marking
strategy and holds for all values of the penalty parameter, for which
the method is coercive.
This has important consequences in practical
computations: Since the condition number of the
respective stiffness matrix grows as the penalty parameter grows, the
magnitude of the penalisation affects the performance of iterative
linear solvers. This fact becomes even more relevant for the here
considered fourth order problem. 
We stress, however, that this technique does not provide linear
or even optimal convergence rates.

In this work, we extend~\cite{KreuzerGeorgoulis:17} to an
\ACIPGM for the Biharmonic problem~\eqref{eq:Biharmonic_strong}. The
main result states convergence of the adaptive loop~\eqref{loop:SEMR}
for most common marking strategies and  all penalty
parameters, for which the method is coercive.
Unfortunately,~\cite{KreuzerGeorgoulis:17}
makes exhaustive use of conforming subspaces
of the respective discrete spaces, which is
prohibitive for the \ACIPGM unless the polynomial degree of the
Ansatz space is large enough.
Therefore, the verification of certain properties of the limit space
requires the development of essentially different techniques and also
the convergence of 
discrete solutions cannot be concluded using the generalised medius
analysis of Gudi \cite{Gudi:10} from \cite{KreuzerGeorgoulis:17}.
For the sake of presentation, in this paper, we restrict ourselves to
quadratic $C^0$-elements.  
We emphasise,  however, that the techniques apply
to more general fourth order problems, arbitrary polynomial and even
discontinuous Galerkin discretisations, however, the construction of
suitable technical tools like interpolation operators and a posteriori
error estimators is getting much more involved.

The rest of this paper is organised as follows. In
Section~\ref{sec:adaptice_COIP_and_manin_result}, we introduce the \CIPG
discretisation and define the \ACIPGM  by a precise formulation of the adaptive
loop~\eqref{loop:SEMR}. We conclude the section stating the main result,
Theorem~\ref{thm:main}. For the sake of clarity, in
Section~\ref{sec:proof_main_result}, we first 
present the main ideas of its proof. The fact that the discrete
$C^0$-spaces do in general not contain proper $C^1$-conforming 
subspaces mainly affects the proofs of the two key technical results,
Lemma~\ref{lem:limit_space_is_Hilbert} and
Theorem~\ref{thm:u_k_to_u_infty}. They are presented in
Section~\ref{sec:proof_u_k_to_u_infty}.

\section{The adaptive \CIPG~finite element method and the main
  result}
\label{sec:adaptice_COIP_and_manin_result}
Let $\omega$ be a measurable set
 and $m\in\N$. We consider the usual Lebesgue
spaces $L^p(\omega;\R^m)$, $1\le p\le \infty$ over $\omega$ with
values in $\R^m$. In the case $p=2$, $L^2(\omega;\R^m)$ is a Hilbert
space  with inner product  $\scp[\omega]{\cdot}{\cdot}$ 
and associated norm $\norm[\omega]{\cdot}$. We also set $L^2(\omega):=L^2(\omega;\R)$.
The
Sobolev space $H^k(\omega)$ is the space of all functions in
$L^2(\omega)$ whose weak derivatives up to order $k$ are in $L^2(\omega)$. 
Thanks to the Poincar\'e-Friedrichs' inequality, the closure
$H_0^2(\omega)$ of $C_0^\infty(\omega)$ in $H^2(\omega)$ 
is a Hilbert space with inner product
$\scp[\omega]{D^2 \cdot}{D^2 \cdot}$ and norm
$\norm[\omega]{D^2 \cdot}$, where $D^2v$ denotes the Hessian of $v$.
The dual space $H^{-2}(\omega)$ of $H_0^2(\omega)$ is equipped with the norm
$\norm[H^{-2}(\omega)]{\lf{v}}:=\sup_{w\in
  H_0^2(\omega)}\frac{\dual{\lf{v}}{w}}{\norm[\omega]{D^2 w}}$, $\lf{v}\in H^{-2}(\omega)$, with
dual brackets defined by $\dual{\lf{v}}{w}:=\lf{v}(w)$, for
$w\in H^2_0(\omega)$.

For $f\in L^2(\Omega)$,
the weak formulation of~\eqref{eq:Biharmonic_strong} reads: find $u\in
H_0^2(\Omega)$, such that 
\begin{align}  \label{eq:Bihamonic_equation}
  a(u,v) = \int_\Omega fv \dx \quad \forall v \in
           H^2_0(\Omega),
\end{align}
for the bilinear form 
\begin{align*}
  a(w,v):=\int_\Omega D^2w \colon D^2v \dx = \int_\Omega \sum_{i,j=1}^2 \frac{\partial^2 w}{\partial 
  x_i \partial x_j} \frac{\partial^2 v}{\partial x_i \partial x_j}\dx ,
\end{align*}
which is uniformly coercive and continuous on $H_0^2(\Omega)$.
Consequently, Riesz' representation theorem provides  a unique
solution $u \in H^2_0(\Omega)$ of
\eqref{eq:Bihamonic_equation}. 

\subsection{The \CIPG~finite element Method}
 Let $\grid$ be a conforming and shape
regular subdivision of $\Omega$ into adjacent closed triangular elements
$\elm \in \grid$ such that $\overline \Omega = \bigcup\{\elm \colon\elm \in
  \grid\} $. 
Let
$\sidesk[\grid]:=\sides(\grid)$ be the set of one-dimensional faces
$\side$, associated with the subdivision $\grid$ (including $\partial
\Omega$), and let $\insides[\grid]$ be the
subset of interior sides only. The corresponding \textit{skeletons} are
then defined by $\skeletonk[\grid]
=\skeletonk[](\grid):= \bigcup\{\side \colon \side \in
  \sidesk[\grid]\} $ and $\inskeletonk[\grid]:=\bigcup\{\side \colon \side \in
  \insides[\grid]\} $  respectively. We assume that $\grid$ is derived by
iterative or recursive bisection of an initial
conforming mesh $\grid_0$; compare with \cite{Baensch:91,Kossaczky:94,Maubach:95}.
We denote by $\grids$ the family of shape-regular triangulations
consisting of such refinements of $\grid_0$. For
$\grid,\grid_\star\in\grids$, we write $\grid_\star\ge\grid$,
whenever $\grid_\star$ is a refinement of $\grid$.

For $r\ge 2$, we define the \textit{Lagrange finite-element
  space} by  
\begin{align*}
  \V(\grid)\definedas
  H^1_0(\Omega)\cap\mathbb{P}_r(\grid)\quad\text{with}\quad
  \mathbb{P}_r(\grid)\definedas \{v \in L^1(\Omega)\colon v|_\elm
  \in \mathbb{P}_r(\elm)~ \forall \elm \in \grid\}.
\end{align*}
Obviously, we have $\V(\grid)\subset H_0^1(\Omega)$ but
$\V(\grid)\not\subset H_0^2(\Omega)$ in general. Since each function
in $\V(\grid)$ is piecewise polynomial on 
$\grid$, we have, however, that 
\begin{align*}
  \V(\grid)\subset H_0^2(\grid)\definedas H^2(\grid)\cap H_0^1(\Omega),
\end{align*}
where $H^2(\grid):=\{v\in L^2(\Omega)\colon v|_\elm\in
H^2(\elm),~\forall\elm\in\grid\}$.

The  piecewise constant \textit{mesh-size} function $\hG:
\Omega \to \R_{\geq 0}$ is defined by $\hG(x):
=\hG[\elm]:=\abs{\elm}^{1/d}$ for $x \in \elm \setminus \partial \elm$ and
$\hG(x) :=\hG[\side]:=\abs{\side}^{1/(d-1)}$ for $x\in\side \in
\sides$.  Let $\nodes[\grid]$ be the set of Lagrange nodes of
$\V(\grid)$, which can be identified with its nodal degrees of freedom
$\DOFs$. For $z\in\overline{\Omega}$, we denote its 
neighbourhood by $\neighG(z):= \set{\elm ' \in \grid \mid z \in
  \elm'}$, and the corresponding domain is defined by 
$\omega_\grid(z):=\Omega(\neighG(z))
$. Hereafter we use
$\Omega(X)\definedas
  \bigcup\{\elm\mid\elm\in
  X\}
$ for a collection of elements $X$.
With a little abuse of notation, for an element $\elm \in \grid$ we define 
its \emph{$j$th neighbourhood} recursively by $\neighG^j(\elm)\definedas \set{\elm '
  \in \grid \mid \elm' \cap \neighG^{j-1} (\elm) \not =
  \emptyset}$, where we set $\neighG^0(\elm)\definedas\elm$, and the
corresponding domain by
$\omega_\grid^j(\elm)\definedas\Omega(\neighG^j(\elm))$. We shall skip
the superindex if $j=1$, e.g. we write 
 $\neighG(\elm)=\neighG^1(\elm)$ and
$\omega_\grid(\elm)=\omega_\grid^1(\elm)$ for simplicity.
For a side $\side \subset \sidesk[\grid]$, we set
$\omega_\grid (\side)\definedas\bigcup \set{\elm \in \grid  \mid \side
  \subset \elm}$. We extend the above definitions to subsets
$\mathcal{M}\subset\grid$ setting
\begin{align*}
  \neighG^j(\mathcal{M})\definedas \{\elm\in\grid\colon \exists
  \elm'\in \mathcal{M}~\text{such
  that}~\elm\in\neighG^j(\elm')\}.
\end{align*}

Note that the shape regularity and conformity
of $\grids$ implies local quasi-uniformity, i.e.
\begin{align*}
  \sup_{\grid \in \grids} \max_{\elm' \in N_\grid(\elm)}
  \frac{\abs{\elm}}{\abs{\elm'}} \Cleq 1\qquad \text{and   }\qquad \sup_{\grid \in \grids} \max_{\elm \in \grid} \#N_\grid(\elm) \Cleq
  1 .
\end{align*}
In the sequel we use the notation $ a \Cleq b$, when $a \leq Cb $ for a
constant $C>0$, which is independent of all essential quantities (e.g. the
mesh-size of $\grid)$. 

In order to formulate the discrete bilinear form, we first need to
introduce the so-called jumps and averages of vector- respectively
tensorfields on the skeleton $\skeletonk[\grid]$. In fact, for
$v\in \VG$, we define
\begin{align*}
  \jumpn{}{v}_\side\definedas \jump{\nabla v\cdot\normal} |_\side\definedas \nabla v|_{\elm_1}\cdot\normal_{\elm_1}+\nabla v|_{\elm_2}\cdot\normal_{\elm_2}
\end{align*}
for $\side\in\insides[\grid]$ and $\side=\elm_1\cap\elm_2$ with 
two adjacent elements  $\elm_1,\elm_2\in\grid$. If $\side
\subset\partial \elm\cap\partial\Omega$, then $\jumpn{}{v}_\side\definedas
\nabla v|_{\elm}\cdot \normal_\elm$. The average of the Hessian of
$v\in\VG$ is defined by
\begin{align*}
  \meann{2}{v}_\side\definedas\mean{(D^2v)\normal\cdot\normal}\definedas  \frac 1 2
  \left(
  D^2v|_{\elm_1}+D^2v|_{\elm_2}\right) \normal_{\elm_1}\cdot\normal_{\elm_1}
\end{align*}
whenever $\side\in\insides[\grid]$ with $\side=\elm_1\cap\elm_2$ and
$\meann{2}{v}_\side:=D^2v|_{\elm}\normal_\elm\cdot\normal_\elm$ for  sides $\side\subset
\partial\elm\cap\partial\Omega$. We stress that the above definitions
do not depend on the choice of the ordering of the elements $\elm_1$
and $\elm_2$. This is not true for 
\begin{align}\label{eq:jump2n}
  \jumpn{2}{v}_\side\definedas \jumpn{}{(\nabla v\cdot\normal_{\elm_1})}
  |_\side\quad\text{and}\quad \meann{}{v}_\side\definedas
   \frac 1 2\left(\nabla v|_{\elm_1}+\nabla
  v|_{\elm_2}\right) \cdot\normal_{\elm_1} 
\end{align}
for $\side\in\insides[\grid]$ with $\side=\elm_1\cap\elm_2$ for
adjacent $\elm_1,\elm_2\in\grid$. However, the two expressions will
only appear as products with each other, e.g. as $\jumpn{2}{v}_\side
\meann{}{w}_\side$ or as $\jumpn{2}{v}_\side^2$, which are then again unique.



For $v,w \in \V(\grid)$ we recall then the discrete bilinear form from
\cite{BrennerSung:05, BrennerGudiSung2009}
\begin{align*}
  \begin{aligned}
    \bilin[\grid]{v}{w}&:= \int_{\grid} D^2v \colon D^2 w \dx -
    \int_{\sidesk[\grid]} \meann{2}{v}\jumpn{}{w} +
   \meann{2}{w} \jumpn{}{v} \ds \\
    &+\int_{\sidesk[\grid]} \frac{\sigma}{\hG} \jumpn{}{v} \jumpn{}{w}
    \ds.
  \end{aligned}
\end{align*}
Here, we used the following abbreviations
\begin{align*}
  \int_\grid \cdot \dx := \sum_{\elm \in \grid} \int_\elm \cdot \dx
  \quad \text{and}\quad \int_{\sidesk[\grid] } \cdot \ds :=\sum_{\side
  \in \sidesk[\grid]} \int_\side  \cdot \ds,
\end{align*}
where on each element $\elm\in\grid$, the piecewise 
Hessian $(\DG^2v)|_\elm = D^2(v|_\elm)\in L^2(\elm)$ exists since $v \in
H^2_0(\grid)$, i.e. we have $\int_{\grid} D^2v \colon D^2
w \dx = \int_\Omega \DG^2 v\colon \DG^2 w\dx$.

For sufficiently large $\sigma$, we have from \cite{BrennerSung:05}
that $\bi$ is continuous 
and coercive on $\VG$ with respect to the \textit{energy norm} 
\begin{align*}
  \enorm[\grid]{v}^2:= \int_\grid D^2 v \colon D^2 v \dx +
  \int_{\sidesk[\grid]}\frac{\sigma}{\hG} \abs{\jumpn{}{v}}^2 \ds \quad \forall
  v \in H^2_0 (\grid).
\end{align*}
When we consider the norm on a subset $\mathcal{M}\subset\grid$, then
we simply replace $\grid$ by $\mathcal{M}$ in the above definition.
In the following, instead of $\int_\Omega \DG^2 v \colon
\DG^2 v \dx $, we will also write $\int_\Omega \abs{\DG^2 v}^2 \dx $
for brevity.

\begin{prop}[Continuity and coercivity]
\label{prop:coerc_and_cont_discrete}
Let $\grid \in \grids$, then there exists $\sigma_\star>0$, such that
for all $\sigma
>\sigma_\star$
there exist positive constants
$C_{\textsf cont}, C_{\textsf coer}$  such that 
\begin{align*}
  \bilin[\grid]{v}{w} \leq C_{\textsf{cont}}
  \enorm[\grid]{v}\enorm[\grid]{w}
\qquad\text{and}\qquad
    C_{\textsf{coer}} \enorm[\grid]{v}^2 \leq \bilin[\grid]{v}{v}. 
\end{align*}
for all $v,w\in\VG$. The constants $\sigma_\star$, 
$C_{\textsf cont}$, and $C_{\textsf coer}$ solely depend on the shape
regularity of $\grid$ and the polynomial degree $r$.
\end{prop}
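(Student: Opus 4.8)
The plan is to reduce both estimates to elementwise and facewise bounds by means of discrete trace (inverse) inequalities, exactly as in the original references \cite{BrennerSung:05,BrennerGudiSung2009}. First I would treat continuity: expand $\bilin[\grid]{v}{w}$ into its three constituent terms. The volume term $\int_\grid D^2v\colon D^2w\dx$ is controlled by $\enorm[\grid]{v}\enorm[\grid]{w}$ directly by Cauchy--Schwarz. The penalty term $\int_{\sidesk[\grid]}\frac{\sigma}{\hG}\jumpn{}{v}\jumpn{}{w}\ds$ is likewise bounded by $\enorm[\grid]{v}\enorm[\grid]{w}$ after a facewise Cauchy--Schwarz, since these quantities appear squared in the energy norm. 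The only genuinely nontrivial piece is the consistency term $\int_{\sidesk[\grid]}\meann{2}{v}\jumpn{}{w}\ds$ (and its symmetric partner): here I would insert $\hG^{1/2}\hG^{-1/2}$ to pair $\hG^{-1/2}\jumpn{}{w}$ (bounded by the penalty part of $\enorm[\grid]{w}$, using $\sigma\gtrsim 1$) with $\hG^{1/2}\meann{2}{v}$, and then invoke the discrete trace inequality $\hG[\elm]^{1/2}\norm[\partial\elm]{D^2(v|_\elm)\normal\cdot\normal}\Cleq\norm[\elm]{D^2 v}$, valid on shape-regular elements for polynomials of degree $r$; summing over faces and elements (each element meeting boundedly many faces by local quasi-uniformity) yields the bound by $\norm[\Omega]{\DG^2 v}\le\enorm[\grid]{v}$. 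This gives $C_{\textsf{cont}}$ depending only on shape regularity and $r$.

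For coercivity, set $w=v$. The penalty term of $\bilin[\grid]{v}{v}$ equals exactly the penalty part of $\enorm[\grid]{v}^2$, and the volume term equals $\int_\Omega\abs{\DG^2 v}^2\dx$. It remains to absorb the consistency contribution $-2\int_{\sidesk[\grid]}\meann{2}{v}\jumpn{}{v}\ds$. Estimating as above but with a Young's inequality with parameter $\delta>0$, one gets
\begin{align*}
  2\abs[\Big]{\int_{\sidesk[\grid]}\meann{2}{v}\jumpn{}{v}\ds}
  \le \delta\, C_{\text{tr}}\int_\Omega\abs{\DG^2 v}^2\dx
      + \frac{1}{\delta}\int_{\sidesk[\grid]}\hG\,\abs{\meann{2}{v}}^2\ds\cdot\frac{1}{?}
\end{align*}
— more precisely, one splits $\hG^{-1}=\tfrac{\sigma}{\hG}\cdot\tfrac1\sigma$, so the second term is $\tfrac{1}{\delta\sigma}\int_{\sidesk[\grid]}\tfrac{\sigma}{\hG}\abs{\jumpn{}{v}}^2\ds$ times the trace constant, while the first term is $\delta\,C_{\text{tr}}\int_\Omega\abs{\DG^2v}^2\dx$. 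Choosing $\delta$ small and then $\sigma$ large enough (so $\delta C_{\text{tr}}<1$ and $\tfrac{C_{\text{tr}}}{\delta\sigma}<1$), both pieces are absorbed with room to spare, giving $C_{\textsf{coer}}>0$ and the threshold $\sigma_\star$; these depend only on $C_{\text{tr}}$, hence only on shape regularity and $r$.

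The main obstacle is purely bookkeeping: the consistency term is the delicate one, and one must be careful that the discrete trace inequality is applied on each element for its piecewise Hessian (legitimate since $v\in H^2_0(\grid)$ and $v|_\elm$ is a polynomial), that the face-to-element sum is finite with constant controlled by local quasi-uniformity, and that the factor $\sigma$ enters linearly in the penalty so it can be used to beat the $O(1)$ trace constant. Since all of this is already carried out in \cite{BrennerSung:05} for the fixed-mesh case and the constants there are explicitly mesh-independent under shape regularity, I would simply cite that result and note that $\grids$ being a shape-regular family of bisection refinements makes $\sigma_\star$, $C_{\textsf{cont}}$, $C_{\textsf{coer}}$ uniform over $\grid\in\grids$.
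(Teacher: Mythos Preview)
Your proposal is correct and matches the paper's approach: the paper does not give its own proof of this proposition but simply cites \cite{BrennerSung:05}, which is exactly what you conclude you would do after sketching the standard trace-inequality/Young's-inequality argument. Your sketch of that argument is accurate; the only cosmetic issue is the placeholder ``$1/?$'' in the displayed Young inequality, which you immediately resolve in the following sentence.
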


 Since $\VG$ is a Banach space with the energy-norm
 $\enorm[\grid]{\cdot}$,  there exists a unique $u_\grid\in\VG$ with
\begin{align}\label{eq:C0IP_Problem}
  \bilin[\grid]{u_\grid}{v_\grid}=\int_\Omega fv_\grid \dx \qquad \forall v_\grid \in \V(\grid).
\end{align}
This is the  $C^0$\textit{-interior penalty Galerkin} approximation of 
\eqref{eq:Bihamonic_equation}, which depends continuously on $f$, i.e.
\begin{align}\label{est:uniform_stab_solution}
  \enorm[\grid]{u_\grid} \Cleq  \|f\|_{\Omega}
\end{align}
thanks to the following broken \Poincare-Friedrichs
inequalities; compare with \cite{Brenner:2003}.
\begin{prop}\label{prop:Friedrich_type_estimate_global}
Let $v \in \V(\grid)$, then we have
\begin{align*}
\snorm[H^1_0(\Omega)]{v}^2 \Cleq \sum_{\elm \in \grid}
  \snorm[H^2(\elm)]{v}^2 
+\sum_{\side \in \sides(\grid)} h_\side^{-1} \int_\side \jumpn{}{v}^2 \, ds \Cleq \enorm[\grid]{v}^2.
\end{align*}
\end{prop}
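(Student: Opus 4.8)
The plan is to treat the two inequalities separately; the right-hand one is immediate and the left-hand one carries the content. For the upper bound by $\enorm[\grid]{v}^2$, note that $v|_\elm\in H^2(\elm)$ gives $\sum_{\elm\in\grid}\snorm[H^2(\elm)]{v}^2\Cleq\int_\Omega\abs{\DG^2v}^2\dx$, while, since the penalty parameter $\sigma>\sigma_\star$ is a fixed constant, $\sum_{\side\in\sides(\grid)}h_\side^{-1}\int_\side\jumpn{}{v}^2\ds\le\sigma_\star^{-1}\int_{\sides(\grid)}\tfrac{\sigma}{\hG}\abs{\jumpn{}{v}}^2\ds$. Adding these two estimates and recalling the definition of $\enorm[\grid]{\cdot}$ yields the claim with a constant depending only on $\sigma_\star$.

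For the left-hand inequality I would invoke the broken Poincar\'e--Friedrichs inequality for piecewise $H^1$ functions of \cite{Brenner:2003}, applied to the piecewise gradient $\nablaG v$. Since $v\in\VG\subset H^1_0(\Omega)$ is globally continuous and vanishes on $\partial\Omega$, its weak gradient coincides with $\nablaG v$, so $\snorm[H^1_0(\Omega)]{v}^2=\int_\Omega\abs{\nablaG v}^2\dx$; moreover $(\nablaG v)|_\elm=\nabla(v|_\elm)\in H^1(\elm)^2$ because $v|_\elm\in H^2(\elm)$, so each of the two scalar components of $\nablaG v$ is piecewise $H^1$ with piecewise $H^1$-seminorm controlled by the piecewise $H^2$-seminorm of $v$. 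Two further observations make Brenner's inequality directly applicable here. First, each component of $\nablaG v$ has vanishing integral over $\Omega$: by the divergence theorem, $\int_\Omega\partial_{x_i}v\dx=\int_{\partial\Omega}v\,(\normal_\Omega)_i\ds=0$ because $v|_{\partial\Omega}=0$, so the mean-value term drops out. Second, every jump of $\nablaG v$ across an interior side is controlled by the normal jump $\jumpn{}{v}$: on $\side=\elm_1\cap\elm_2$ the continuity of $v$ forces the tangential derivative of $v$ to be single-valued, hence $\nabla v|_{\elm_1}-\nabla v|_{\elm_2}=\jumpn{}{v}_\side\,\normal_{\elm_1}$ on $\side$, so each component of $\nablaG v$ jumps by at most $\abs{\jumpn{}{v}_\side}$ in modulus.

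Applying \cite{Brenner:2003} componentwise to $\nablaG v$ on the shape-regular mesh $\grid$, summing over the two components, and inserting these observations then gives
\[
  \snorm[H^1_0(\Omega)]{v}^2=\int_\Omega\abs{\nablaG v}^2\dx
  \Cleq\sum_{\elm\in\grid}\snorm[H^2(\elm)]{v}^2
       +\sum_{\side\in\insides[\grid]}h_\side^{-1}\int_\side\jumpn{}{v}^2\ds
  \le\sum_{\elm\in\grid}\snorm[H^2(\elm)]{v}^2
       +\sum_{\side\in\sides(\grid)}h_\side^{-1}\int_\side\jumpn{}{v}^2\ds,
\]
which is the asserted bound. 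I expect the only delicate point to be the bookkeeping around the hypotheses of \cite{Brenner:2003}: one must verify that its constant depends solely on the shape regularity of $\grids$ --- uniform over all bisection refinements of $\grid_0$ --- and observe that the sharp form of that inequality merely requires control of the $L^2(\side)$-projections of the jumps onto constants, which is trivially covered here since we control the full normal jumps $\jumpn{}{v}$ on every side.
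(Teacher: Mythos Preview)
Your proposal is correct and follows exactly the approach the paper indicates: the paper does not give its own proof but simply writes ``compare with \cite{Brenner:2003}'', and you have supplied the details of how Brenner's broken Poincar\'e--Friedrichs inequality applies componentwise to $\nablaG v$, using the continuity of $v$ to reduce the full jumps to normal jumps and the boundary condition to kill the mean-value term. The bookkeeping you flag (shape-regularity dependence of the constant, control of jump projections) is indeed handled in \cite{Brenner:2003}.
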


Unfortunately, $\bi$ cannot be applied to functions from
$H_0^2(\grid)$ since no trace of second derivatives is available.
For a side $\side \in
\sidesk[\grid]$  we therefore define a local lifting operator
$\liftG[\grid]^\side \colon L^1(\side) \to 
  \polspace[r-2] (\grid)
^{2 \times 2}$ by 
\begin{align}\label{df:local_liftings}
  \int_\Omega \liftG[\grid]^\side(\varphi) \colon \vec{\tau} \dx = \int_\side
 \mean{\vec{\tau}\normal\cdot\normal }  \varphi \ds  \quad \forall \vec{\tau} \in  
  \polspace[r-2] (\grid)^{2 \times 2},
\end{align}
where the support of $\liftG[\grid]^\side(\varphi)$ is given by
$\omega_\grid(\side)$. Using a trace estimate, we have that 
\begin{align}\label{est:local_liftings}
  \norm[{\Omega}]{\liftG[\grid]^\side(\varphi)} \Cleq 
   \norm[\side]{\hG^{-1/2} \varphi};
\end{align}
where the right-hand side is allowed to be infinity;
compare also with e.g.
\cite[Lemma 4.33]{DiPietroErn:12}. We define the
 \textit{global lifting operator}
$\liftG[\grid] \colon L^1(\skeletonk[\grid]) \to  
  \polspace[r-2] (\grid)^{2\times 2} $ by
\begin{align}\label{est:global_liftings}
  \liftG[\grid](\varphi) :=\sum_{\side \in \sidesk[\grid]}
  \liftG[\grid]^\side (\varphi)\quad\text{with}\quad \norm[\Omega]{\liftG[\grid](\phi)} \Cleq
  \norm[{\skeletonk[\grid]}]{\hG^{-1/2} \phi}.
\end{align}
Noting that $\pn{v}\in L^2(\skeletonk[\grid])$ for all $v\in
H^2_0(\grid)$, we can extend the bilinear form $\bi$ from $\VG$ to $H_0^2(\grid)$ by
\begin{align}\label{df:bilinear_form_VG}
  \begin{aligned}
  \bilin[\grid]{v}{w}&\definedas \int_{\grid} D^2v \colon D^2 w \dx -
  \int_\Omega\liftG[\grid](\jump{\pn{w}}) \colon \DG^2v
                         +\liftG[\grid](\jump{\pn{v}}) \colon \DG^2w \dx \\
&+\int_{\sidesk[\grid]} \frac{\sigma}{\hG} \jumpn{}{v} \jumpn{}{w} \ds.
  \end{aligned}
\end{align}
In what follows, $\bi$ refers always to this definition
  unless stated otherwise.

Discontinuous Galerkin spaces can be
embedded into the space of functions with bounded
variation; compare
e.g. with \cite[Lemma 2]{BuffaOrtner:09}.
In the context of \CIPG methods, 
this transfers to an embedding of the first derivatives; compare also
with \cite{LewNeff:2004}. In order to make the statement more precise,
we denote by 
$BV(\Omega)^2$ the Banach space of vector valued functions with bounded
variation equipped with the norm
\begin{align*}
  \norm[BV(\Omega)]{\cdot}=\norm[L^1(\Omega)]{\cdot}+|D\cdot|(\Omega). 
\end{align*}
For $v\in W^{1,2}(\Omega)$, we have that the total variation $D(\nabla v)$
is the measure representing the distributional derivative of $\nabla v$ with total
variation
\begin{align*}
    |D(\nabla v)|(\Omega)\definedas \sup_{\phi\in C_0^1(\Omega)^{2\times 2}, \norm[L^{\infty}(\Omega)\le
    1]{\phi}}\int_\Omega \nabla v \cdot \divo \phi\dx.
\end{align*}
Here $C_0^1(\Omega)^{2\times 2}$ denotes the space of continuously differentiable
functions with compact support in
$\Omega$.

\begin{prop} \label{prop:total_variation_of_gradient_bounded_by_energy}
  Let $v \in \V(\grid)$, then we have for the total variation of
  $\nabla v$ that 
\begin{align*}
|D (\nabla v)|(\Omega)\Cleq \int_\Omega \abs{\DG^2 v} \dx
  + \int_{\sides(\grid)} \abs{\jumpn{}{v}}\, ds \Cleq \enorm[\grid]{v}.
\end{align*}
\end{prop}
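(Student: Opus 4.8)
The plan is to estimate $\abs{D(\nabla v)}(\Omega)$ straight from its definition as a supremum over test fields. Fix $\phi\in C_0^1(\Omega)^{2\times 2}$ with $\norm[L^\infty(\Omega)]{\phi}\le 1$. Since $v\in\V(\grid)$ is piecewise polynomial, $v|_\elm$ is smooth on each $\elm\in\grid$, so I would integrate by parts element by element,
\[
  \int_\elm\nabla v\cdot\divo\phi\dx=-\int_\elm D^2v\colon\phi\dx+\int_{\partial\elm}(\phi\,\normal_\elm)\cdot\nabla v\ds,
\]
and sum over $\elm\in\grid$. The volume integrals add up to $-\int_\Omega\DG^2v\colon\phi\dx$, and the boundary integrals regroup into a sum over faces: on a face $\side\subset\partial\Omega$ the contribution vanishes since $\phi$ has compact support in $\Omega$, while on an interior face $\side\in\insides[\grid]$, $\side=\elm_1\cap\elm_2$, the two element contributions combine, using $\normal_{\elm_2}=-\normal_{\elm_1}$ and continuity of $\phi$, into $\int_\side(\phi\,\normal_{\elm_1})\cdot(\nabla v|_{\elm_1}-\nabla v|_{\elm_2})\ds$.

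The decisive point is that, because $v$ is continuous across $\side$ (being piecewise polynomial and in $H^1_0(\Omega)$), the tangential part of $\nabla v$ matches from both sides, so $\nabla v|_{\elm_1}-\nabla v|_{\elm_2}=\jumpn{}{v}_\side\,\normal_{\elm_1}$; hence the interior-face term equals $\int_\side\jumpn{}{v}\,(\phi\,\normal_{\elm_1}\cdot\normal_{\elm_1})\ds$. Since $\norm[L^\infty(\Omega)]{\phi}\le 1$ bounds $\abs{\DG^2v\colon\phi}\Cleq\abs{\DG^2v}$ and $\abs{\phi\,\normal\cdot\normal}\Cleq 1$ (with constants depending only on the chosen matrix norm), I would take absolute values and then pass to the supremum over admissible $\phi$ to obtain
\[
  \abs{D(\nabla v)}(\Omega)\Cleq\int_\Omega\abs{\DG^2v}\dx+\int_{\inskeletonk[\grid]}\abs{\jumpn{}{v}}\ds\le\int_\Omega\abs{\DG^2v}\dx+\int_{\sides(\grid)}\abs{\jumpn{}{v}}\ds,
\]
the last inequality merely adding the nonnegative boundary-face contributions. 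This is the first asserted estimate.

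For the second estimate, Cauchy--Schwarz on $\Omega$ gives $\int_\Omega\abs{\DG^2v}\dx\le\abs{\Omega}^{1/2}\bigl(\int_\Omega\abs{\DG^2v}^2\dx\bigr)^{1/2}\le\abs{\Omega}^{1/2}\enorm[\grid]{v}$. For the jump term I would peel off the mesh-size weight and apply Cauchy--Schwarz over the skeleton,
\[
  \int_{\sides(\grid)}\abs{\jumpn{}{v}}\ds\le\Bigl(\int_{\sides(\grid)}\hG\ds\Bigr)^{1/2}\Bigl(\int_{\sides(\grid)}\hG^{-1}\abs{\jumpn{}{v}}^2\ds\Bigr)^{1/2}.
\]
Here $\int_{\sides(\grid)}\hG\ds=\sum_{\side\in\sides(\grid)}\hG[\side]^2\Cleq\sum_{\elm\in\grid}\abs{\elm}=\abs{\Omega}$ by shape regularity and conformity (each face lies in at most two elements and $\hG[\side]^2\Cleq\hG[\elm]^2=\abs{\elm}$ for $\side\subset\elm$), while $\int_{\sides(\grid)}\hG^{-1}\abs{\jumpn{}{v}}^2\ds=\sigma^{-1}\int_{\sides(\grid)}\frac{\sigma}{\hG}\abs{\jumpn{}{v}}^2\ds\le\sigma^{-1}\enorm[\grid]{v}^2$. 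Combining these yields $\int_{\sides(\grid)}\abs{\jumpn{}{v}}\ds\Cleq\enorm[\grid]{v}$, which finishes the proof.

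I expect the only genuinely delicate step to be the face identity $\nabla v|_{\elm_1}-\nabla v|_{\elm_2}=\jumpn{}{v}_\side\,\normal_{\elm_1}$ used in the second paragraph: it is precisely here that the $C^0$-conformity of $\V(\grid)$ enters, since continuity of $v$, and hence of its tangential derivative across interior faces, is what allows one to replace the full gradient jump by its normal component $\jumpn{}{v}_\side$. The remaining ingredients — element-wise integration by parts, the compact support of $\phi$, Cauchy--Schwarz, and a shape-regularity scaling count — are entirely routine.
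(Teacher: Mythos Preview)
Your argument is correct. The paper itself does not spell out a proof of this proposition; it merely points to \cite[Lemma~2]{BuffaOrtner:09} and \cite{LewNeff:2004} for the analogous embedding, so your element-wise integration-by-parts followed by the face identity $\nabla v|_{\elm_1}-\nabla v|_{\elm_2}=\jumpn{}{v}_\side\,\normal_{\elm_1}$ (coming from $C^0$-conformity) is precisely the standard route taken in those references, and your Cauchy--Schwarz argument for the second inequality is the expected one.
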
 


\subsection{A posteriori error bounds}
From here on, we restrict ourselves to quadratic $C^0$-elements,
i.e., $r=2$ and introduce the a posteriori error estimators from~\cite{BrennerGudiSung2009}. 
For $v \in \V(\grid)$ and $\elm \in \grid$ let
\begin{align}\label{eq:element_estimator}
   \est (v, \elm):= \left( \int_\elm \hG^4 \abs{f}^2 \dx +
  \int_{\partial \elm \cap  \Omega } \hG \jumpn{2}{v}^2\ds +
\sigma^2 \int_{\partial \elm} \hG^{-1} \jumpn{}{v}^2\ds  \right)^{1/2}.
\end{align}
When $v=u_\grid$, we simply write $\est_\grid
(\elm):=\est(u_\grid,\elm)$. Moreover, for $\marked \subset \grid $, we
set
\begin{align*}
  \est_\grid(v,\marked):=\left( \sum_{\elm \in \marked}
  \est(v,\elm)^2\right)^{1/2}\quad\text{and}\quad
  \est_\grid(\marked)\definedas \est_\grid(u_\grid,\marked).
\end{align*}
From \cite[Theorem 3.1]{BrennerGudiSung2009}, we have
that~\eqref{eq:element_estimator} defines a reliable estimator.
\begin{prop}\label{prop:upper_bound_estimator}
  Let $u \in H^2_0(\Omega)$ be the solution
  of~\eqref{eq:Bihamonic_equation} and $u_\grid$ the discrete solution
  of~\eqref{eq:C0IP_Problem}. Then,
  \begin{align*}
    \enorm[\grid]{u-u_\grid}\Cleq \est_\grid(
    \grid),
  \end{align*}
where the constants in $\Cleq$  depend only on the shape regularity
of $\grid$.
\end{prop}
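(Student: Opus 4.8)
The plan is to adapt the reliability argument of Brenner, Gudi and Sung~\cite{BrennerGudiSung2009}. Its backbone is a $C^1$-conforming \emph{enriching operator} $E_\grid\colon\VG\to H^2_0(\Omega)$, built from Hsieh--Clough--Tocher macroelements on a fixed refinement of $\grid$---recall that for $r=2$ no $C^1$-conforming subspace of $\VG$ with approximation power is available---satisfying the \emph{companion estimate}
\begin{align*}
  \sum_{\elm\in\grid}\Big(h_\elm^{-4}\norm[\elm]{v-E_\grid v}^2+h_\elm^{-2}\norm[\elm]{\nabla(v-E_\grid v)}^2+\norm[\elm]{\DG^2(v-E_\grid v)}^2\Big)\Cleq\sum_{\side\in\sidesk[\grid]}h_\side^{-1}\norm[\side]{\jumpn{}{v}}^2
\end{align*}
for all $v\in\VG$; it encodes that the only obstruction to $H^2_0(\Omega)$-conformity of $v$ is the jump $\jumpn{}{v}$ of its normal derivative. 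Setting $u_\grid^c\definedas E_\grid u_\grid\in H^2_0(\Omega)$ and using the triangle inequality, $\enorm[\grid]{u-u_\grid}\le\enorm[\grid]{u-u_\grid^c}+\enorm[\grid]{u_\grid^c-u_\grid}$. Since $\jumpn{}{u_\grid^c}=0$, the second term equals $\norm[\Omega]{\DG^2(u_\grid^c-u_\grid)}^2+\sigma\sum_{\side\in\sidesk[\grid]}h_\side^{-1}\norm[\side]{\jumpn{}{u_\grid}}^2$, and the companion estimate bounds it by $\Cleq\sum_{\side\in\sidesk[\grid]}\sigma^2h_\side^{-1}\norm[\side]{\jumpn{}{u_\grid}}^2\le\est_\grid(\grid)^2$ (with a constant depending on $\sigma_\star$).

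For the conforming part, $u-u_\grid^c\in H^2_0(\Omega)$ kills the jump part of $\enorm[\grid]{\cdot}$, and since $a(\cdot,\cdot)$ is the inner product of $H^2_0(\Omega)$ we obtain, using~\eqref{eq:Bihamonic_equation},
\begin{align*}
  \enorm[\grid]{u-u_\grid^c}=\norm[\Omega]{D^2(u-u_\grid^c)}=\sup_{w\in H^2_0(\Omega),\,\norm[\Omega]{D^2 w}=1}\left(\int_\Omega fw\dx-\int_\Omega D^2 u_\grid^c\colon D^2 w\dx\right).
\end{align*}
Fix such a $w$ and let $w_\grid\in\VG$ be the Lagrange interpolant of $w$ (well-defined as $H^2(\Omega)\arrowemb C^0(\overline\Omega)$ in two dimensions), which satisfies the standard bounds $\sum_{\elm\in\grid}(h_\elm^{-4}\norm[\elm]{w-w_\grid}^2+h_\elm^{-2}\norm[\elm]{\nabla(w-w_\grid)}^2+\norm[\elm]{\DG^2 w_\grid}^2)+\sum_{\side\in\sidesk[\grid]}h_\side^{-1}\norm[\side]{\jumpn{}{(w-w_\grid)}}^2\Cleq\norm[\Omega]{D^2 w}^2$. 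Replacing $D^2 u_\grid^c$ by $\DG^2 u_\grid$ costs at most $\norm[\Omega]{\DG^2(u_\grid^c-u_\grid)}\norm[\Omega]{D^2 w}\Cleq\est_\grid(\grid)\norm[\Omega]{D^2 w}$ by the companion estimate. Since $\DG^2 u_\grid\in\polspace[0](\grid)^{2\times 2}$, the defining relation~\eqref{df:local_liftings} of the liftings gives $\int_\Omega\liftG[\grid](\jump{\pn{w_\grid}})\colon\DG^2 u_\grid\dx=\int_{\sidesk[\grid]}\meann{2}{u_\grid}\jumpn{}{w_\grid}\ds$; combining this with the definition~\eqref{df:bilinear_form_VG} of $\bi$ and the Galerkin identity $\bilin[\grid]{u_\grid}{w_\grid}=\int_\Omega fw_\grid\dx$ yields
\begin{align*}
  \int_\Omega fw\dx-\int_\Omega D^2 u_\grid^c\colon D^2 w\dx
  ={}&\int_\Omega f(w-w_\grid)\dx-\int_\grid\DG^2 u_\grid\colon D^2(w-w_\grid)\dx\\
  &-\int_{\sidesk[\grid]}\meann{2}{u_\grid}\jumpn{}{w_\grid}\ds+R,
\end{align*}
where $R$ collects the enriching error, the lifting term $\int_\Omega\liftG[\grid](\jump{\pn{u_\grid}})\colon\DG^2 w_\grid\dx$ (bounded by~\eqref{est:global_liftings} and the interpolation estimate) and the penalty term $\sigma\sum_{\side\in\sidesk[\grid]}h_\side^{-1}\int_\side\jumpn{}{u_\grid}\jumpn{}{w_\grid}\ds$ (using $\jumpn{}{w_\grid}=-\jumpn{}{(w-w_\grid)}$), so that $\abs{R}\Cleq\est_\grid(\grid)\norm[\Omega]{D^2 w}$.

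It remains to estimate the three explicit terms. The first is $\Cleq(\sum_{\elm\in\grid}\int_\elm\hG^4\abs{f}^2\dx)^{1/2}\norm[\Omega]{D^2 w}$ by Cauchy--Schwarz and the interpolation estimate. For the volume term we integrate by parts elementwise: $u_\grid$ being piecewise quadratic, $\nabla\Delta u_\grid=0$ and $\DeltaG^2 u_\grid=0$ on each $\elm$, so that $\int_\grid\DG^2 u_\grid\colon D^2(w-w_\grid)\dx=\sum_{\elm\in\grid}\int_{\partial\elm}(\DG^2 u_\grid\,\normal_\elm)\cdot\nabla(w-w_\grid)\ds$. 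Regrouping over the interior edges and splitting $\nabla(w-w_\grid)$ into its tangential part (single-valued, as $w-w_\grid$ is continuous) and its normal part (whose jump across an edge is $-\jumpn{}{w_\grid}$, as $\nabla w$ is continuous), the normal--normal contribution turns out to be exactly $\int_{\sidesk[\grid]}\meann{2}{u_\grid}\jumpn{}{w_\grid}\ds$, which \emph{cancels} the third explicit term, while the remainder is $-\sum_{\side\in\insides[\grid]}\int_\side(\jumpn{2}{u_\grid}\,\mean{\partial_n(w-w_\grid)}+(\partial_t\jumpn{}{u_\grid})\,\partial_t(w-w_\grid))\ds$ (on $\partial\Omega$ the tangential piece drops since $w=w_\grid=0$ there). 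A trace estimate for $\partial_n(w-w_\grid)$ together with the interpolation bounds controls the first summand by $(\sum_{\side\in\insides[\grid]}h_\side\norm[\side]{\jumpn{2}{u_\grid}}^2)^{1/2}\norm[\Omega]{D^2 w}$, and an inverse estimate $\norm[\side]{\partial_t\jumpn{}{u_\grid}}\Cleq h_\side^{-1}\norm[\side]{\jumpn{}{u_\grid}}$ with a trace estimate for $\partial_t(w-w_\grid)$ controls the second by $(\sum_{\side\in\sidesk[\grid]}h_\side^{-1}\norm[\side]{\jumpn{}{u_\grid}}^2)^{1/2}\norm[\Omega]{D^2 w}$; both are $\Cleq\est_\grid(\grid)\norm[\Omega]{D^2 w}$. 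Collecting all contributions gives $\enorm[\grid]{u-u_\grid^c}\Cleq\est_\grid(\grid)$ and hence $\enorm[\grid]{u-u_\grid}\Cleq\est_\grid(\grid)$.

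The main obstacle is the construction and analysis of the enriching operator $E_\grid$: because for $r=2$ the quadratic $C^0$-space $\VG$ admits no $C^1$-conforming subspace with approximation power, $E_\grid v$ must be sought in a Hsieh--Clough--Tocher space over a refinement of $\grid$, and the companion estimate has to be established edge by edge, comparing the two macroelement nodal values and normal derivatives against the at-most-jump-sized mismatch between the two one-sided traces of $v$ and of $\nabla v$. The second delicate point is the bookkeeping in the elementwise integration by parts, so that after subtracting the rewritten lifting contribution precisely the normal--normal Hessian jump $\jumpn{2}{u_\grid}$---plus a tangential term dominated by the penalty---survives on the interior edges; the element residual is harmless here since $\DeltaG^2 u_\grid=0$ for piecewise quadratics, so no data oscillation beyond $\hG^4\abs{f}^2$ enters the estimator.
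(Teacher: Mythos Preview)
Your argument is correct and faithfully reproduces the reliability proof of Brenner, Gudi and Sung~\cite[Theorem~3.1]{BrennerGudiSung2009}, which is precisely what the paper invokes: the paper does not give its own proof of this proposition but cites~\cite{BrennerGudiSung2009} directly. The enriching operator you use is the operator~$\Cipolk[\grid]$ of~\eqref{df:enriching_operator} and your companion estimate is Lemma~\ref{lem:stability_enriching}; the remaining manipulations (Lagrange interpolant of the $H^2_0$ test function, Galerkin orthogonality, elementwise integration by parts exploiting $\DG^2 u_\grid\in\polspace[0](\grid)^{2\times2}$, and the normal--tangential edge bookkeeping) match the cited reference.
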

In \cite[Section 4]{BrennerGudiSung2009} $\est_\grid$ is also proved
to be efficient.
\begin{prop}\label{prop:lower_bounds_u}
  Let $u \in H^2_0(\Omega)$ be the solution of~\eqref{eq:Bihamonic_equation} and $\grid \in \grids$. Then, for all
  $v \in \V(\grid)$, we have 
   \begin{align*}
     \est_\grid(v,\grid)
     \Cleq  \enorm[\grid]{u-v}
        +\osc(\grid,f),
  \end{align*}
with data-oscillation defined by
\begin{align*}
  \osc(\grid,f)^2:= \sum_{{\elm \in
  \grid}}\osc(\elm,f)^2,\quad&\text{where} \quad  \osc(\elm,f)^2:=  \int_{\elm}\hG[\elm]^4\abs{f-\Pi_0 f}^2
  \dx. 
\end{align*}
Here,  $\ovf$ denotes the
$L^2(\Omega)$-orthogonal projection onto $\mathbb{P}_0(\grid)$,
\begin{align*}
\ovf|_\elm:=\frac{1}{\abs{\elm}} \int_\elm f \dx \quad \forall \elm \in \grid.
\end{align*}
\end{prop}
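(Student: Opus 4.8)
The plan is to bound each of the three contributions to $\est_\grid(v,\grid)^2$ — the element residuals $\hG[\elm]^4\norm[\elm]{f}^2$, the penalty terms $\sigma^2\hG[\side]^{-1}\norm[\side]{\jumpn{}{v}}^2$, and the normal--normal Hessian jumps $\hG[\side]\norm[\side]{\jumpn{2}{v}}^2$ — against $\enorm[\grid]{u-v}^2+\osc(\grid,f)^2$ up to a constant that depends only on shape regularity, on the fixed degree $r=2$, and on $\sigma$; summing over $\elm\in\grid$ and $\side\in\sidesk[\grid]$ and invoking the uniformly bounded overlap of the patches $\omega_\grid(\elm)$ and $\omega_\grid(\side)$ then yields the assertion after a square root. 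The penalty contribution is immediate: since $u\in H^2_0(\Omega)$ we have $\nabla u\in H^1(\Omega)^2$ and $\partial u/\partial\normal_\Omega=0$ on $\partial\Omega$, hence $\jumpn{}{u}_\side=0$ on every $\side\in\sidesk[\grid]$, so that $\jumpn{}{v}_\side=\jumpn{}{v-u}_\side$ and $\sum_{\side}\sigma^2\hG[\side]^{-1}\norm[\side]{\jumpn{}{v}}^2\Cleq\sigma\,\enorm[\grid]{u-v}^2$.

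For the element residual I would use the standard interior bubble. Fix $\elm\in\grid$, set $\ovf|_\elm=\frac1{\abs{\elm}}\int_\elm f$, and let $b_\elm\in H^2_0(\elm)$ be a fixed element bubble with $0\le b_\elm\le1$ and $\int_\elm b_\elm\Cgeq\abs{\elm}$ — for instance the square of the product of the barycentric coordinates, so that $b_\elm$ and $\nabla b_\elm$ vanish on $\partial\elm$. With $w:=b_\elm\,\ovf$, extended by zero, one has $w\in H^2_0(\Omega)$; testing~\eqref{eq:Bihamonic_equation} with $w$, using $\Delta^2 v=0$ on $\elm$ (because $v|_\elm\in\mathbb{P}_2(\elm)$) and element-wise integration by parts (which kills $\scp[\elm]{\DG^2 v}{\DG^2 w}$ since $w\in H^2_0(\elm)$), gives
\[\norm[\elm]{\ovf}^2\Cleq\scp[\elm]{\ovf}{w}=\scp[\elm]{\DG^2(u-v)}{\DG^2 w}+\scp[\elm]{\ovf-f}{w}.\]
Combining $\norm[\elm]{w}\le\norm[\elm]{\ovf}$ with the inverse estimate $\norm[\elm]{\DG^2 w}\Cleq\hG[\elm]^{-2}\norm[\elm]{w}$ yields $\hG[\elm]^2\norm[\elm]{\ovf}\Cleq\norm[\elm]{\DG^2(u-v)}+\osc(\elm,f)$, and since $\norm[\elm]{f}\le\norm[\elm]{\ovf}+\norm[\elm]{f-\ovf}$ and $\osc(\elm,f)=\hG[\elm]^2\norm[\elm]{f-\ovf}$ this gives $\hG[\elm]^4\norm[\elm]{f}^2\Cleq\norm[\elm]{\DG^2(u-v)}^2+\osc(\elm,f)^2\le\enorm[\elm]{u-v}^2+\osc(\elm,f)^2$.

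For the Hessian-jump term I would use an edge bubble together with element-wise integration by parts. Fix $\side=\elm_1\cap\elm_2\in\insides[\grid]$; note that $\jumpn{2}{v}_\side$ is constant on $\side$ because $\DG^2 v$ is piecewise constant. Construct a piecewise-polynomial bubble $w\in H^2_0(\omega_\grid(\side))$, extended by zero, with $w|_\side=0$ and $\partial_n w|_\side=b_\side\,\jumpn{2}{v}_\side$, where $b_\side\ge0$ is a fixed edge bubble supported in $\omega_\grid(\side)$ with $\int_\side b_\side\Cgeq\hG[\side]$, so that $\norm[{\omega_\grid(\side)}]{w}\Cleq\hG[\side]^{3/2}\norm[\side]{\jumpn{2}{v}}$ and, by an inverse estimate, $\norm[{\omega_\grid(\side)}]{\DG^2 w}\Cleq\hG[\side]^{-1/2}\norm[\side]{\jumpn{2}{v}}$. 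Since $w$ and $\nabla w$ vanish on $\partial\omega_\grid(\side)$ and $\Delta^2 v=0$ on $\elm_1,\elm_2$, integration by parts collapses $\scp[{\omega_\grid(\side)}]{\DG^2 v}{\DG^2 w}$ to the single face integral $\int_\side(\DG^2 v|_{\elm_1}-\DG^2 v|_{\elm_2})\normal_{\elm_1}\cdot\nabla w\ds$, which equals $\int_\side\jumpn{2}{v}_\side\,\partial_n w\ds$ because $\nabla w|_\side$ is purely normal ($w\equiv0$ on $\side$); testing~\eqref{eq:Bihamonic_equation} with $w$ and subtracting gives
\[\norm[\side]{\jumpn{2}{v}}^2\Cleq\int_\side\jumpn{2}{v}_\side\,\partial_n w\ds=\int_{\omega_\grid(\side)}fw\dx-\scp[{\omega_\grid(\side)}]{\DG^2(u-v)}{\DG^2 w}.\]
Inserting the two scalings, cancelling one factor $\norm[\side]{\jumpn{2}{v}}$, multiplying by $\hG[\side]$ and using local quasi-uniformity yields $\hG[\side]\norm[\side]{\jumpn{2}{v}}^2\Cleq\norm[{\omega_\grid(\side)}]{\DG^2(u-v)}^2+\sum_{\elm\subset\omega_\grid(\side)}\hG[\elm]^4\norm[\elm]{f}^2$, and absorbing the last sum by the element-residual bound just proved leaves $\hG[\side]\norm[\side]{\jumpn{2}{v}}^2\Cleq\enorm[{\omega_\grid(\side)}]{u-v}^2+\osc(\omega_\grid(\side),f)^2$. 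Summing the three bounds over all elements and interior faces completes the argument.

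I expect the Hessian-jump step to be the only genuinely delicate point. One must design $w$ so that the second integration by parts isolates exactly $\jumpn{2}{v}_\side\,\partial_n w$ on $\side$: the element residuals have to drop out (automatic here since $\Delta^2 v=0$ for $r=2$), the tangential jump $\jump{\partial_{tn}v}$ has to drop out (forced by $w|_\side=0$, using that it is constant along $\side$ so that no interior-vertex endpoint terms survive the tangential integration by parts), and the powers of $\hG[\side]$ carried by the scaling of $b_\side$ and by the inverse estimate must be tracked carefully so that the final exponent $\hG[\side]^{+1}$ matches the one appearing in the estimator.
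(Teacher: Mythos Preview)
Your argument is correct and follows the standard Verf\"urth bubble-function technique: the penalty term is immediate from $\jumpn{}{u}=0$; the element residual is handled by testing with an $H^2_0(\elm)$-bubble times the constant $\Pi_0 f$, exploiting that $\DG^2 v$ is piecewise constant so that $\int_\elm D^2 v:D^2 w=0$; and the normal--normal Hessian jump is handled by an edge bubble $w\in H^2_0(\omega_\grid(\side))$ with $w|_\side=0$ and prescribed $\partial_n w|_\side$, so that two integrations by parts collapse to the single face term $\int_\side\jumpn{2}{v}\,\partial_n w\ds$. The scalings you state are the right ones, and the final bound follows by squaring and multiplying through by $\hG[\side]$.

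For comparison: the paper does not actually supply a proof of this proposition. It is stated with the sentence ``In \cite[Section~4]{BrennerGudiSung2009} $\est_\grid$ is also proved to be efficient'' and then used as a black box. The argument in that reference is precisely the bubble-function approach you outline (element bubble $(\lambda_1\lambda_2\lambda_3)^2$ for the interior residual, a $C^1$ edge bubble of the type $\lambda_1^2\lambda_2^2\lambda_3$ on each side for the Hessian jump), so your proposal matches the cited proof essentially line by line. The only thing you might tighten is the explicit construction of the $H^2_0(\omega_\grid(\side))$ edge bubble ensuring $C^1$-matching across $\side$ with the stated $L^2$- and $H^2$-scalings; this is routine but worth writing out once, since it is the one place where the fourth-order setting differs nontrivially from the second-order Verf\"urth argument.
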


\subsection{The adaptive \CIPG method (\ACIPGM)} Now, we are in
the position to precisely formulate the adaptive algorithm~\eqref{loop:SEMR} based on the modules
$\SOLVE$, $\ESTIMATE$, $\MARK$ and $\REFINE$, which are described in
more detail below.
\begin{algo}[\ACIPGM]\label{alg:SEMR}
Let $\grid_0$ be an initial triangulation. The adaptive algorithm is
an iteration of the following form:
\begin{enumerate}[leftmargin=1cm]
\item \label{step:SOLVE} $u_k=\SOLVE(\V(\grid_k));$
\item $\{\est_{k}(\elm)\}_{\elm \in \grid_k}=\ESTIMATE(u_k,\grid_k);$
\item \label{it_marked} $\marked_k=\MARK \left( \{\est_{k}(\elm)\}_{\elm \in \grid_k}, \grid_k \right);$
\item $\grid_{k+1}=\REFINE (\grid_k, \marked_k);$ increment k and go to Step \ref{step:SOLVE}.
\end{enumerate}
\end{algo}
Here we have replaced the subscript
triangulations $\{\gridk\}_{k\in\N_0}$ with the
iteration counter $k$ in $\est_k(\gridk)=\est_{\gridk}(\gridk)$  for
brevity. Similar short hand notations will be frequently used below
when no confusion can occur, e.g. we write also
$\neighk^j(\elm)=\neighG[\gridk]^j(\elm)$.
Next, we comment on the modules $\SOLVE$, $\ESTIMATE$, $\MARK$ and $\REFINE$.

\textbf{\SOLVE.} For a given mesh $\grid$ we assume that
  \[
    u_\grid=\SOLVE(\V(\grid)) 
  \] is the exact \CIPG~ solution of problem~\eqref{eq:C0IP_Problem}.

\textbf{\ESTIMATE. }We suppose that
  \begin{align*}
    \set{\est_\grid(\elm)}_{\elm \in \grid}:=\ESTIMATE(u_\grid,\elm)
  \end{align*}
  is the elementwise error defined in~\eqref{eq:element_estimator}.

\textbf{\MARK. }We assume that the output
  \begin{align*}
    \marked:=\MARK(\set{\est_\grid(\elm)}_{\elm \in \grid},\grid)
  \end{align*}
  of marked elements satisfies
  \begin{align}
    \label{df:marked_elements}
    \est_\grid(\elm)\leq g(\est_\grid(\marked)),\qquad\text{for all
    $\elm \in \grid\setminus \marked$.} 
  \end{align}
  Here $g\colon \R^+ \to \R^+$ is a fixed function, which is
  continuous in 0, with $g(0)=0$.

  \textbf{\REFINE. }We assume for $\marked \subset \grid$ that 
  \begin{align*}
   \grid \le  \widetilde \grid:= \REFINE(\grid,\marked)\in \grids,
  \end{align*}
  such that
  \begin{align}
    \label{df:refined_elements}
    \elm \in \marked \quad \Rightarrow \quad \elm \in \grid \setminus \widetilde \grid,
  \end{align}
  i.e., each marked element is at least refined once.

\subsection{The main result}
The main result of this work states that the sequence of
\CIPG~finite element approximations produced by the~\ACIPGM (Algorithm
\ref{alg:SEMR}) converges to the
exact solution $u \in H^2_0(\Omega)$
of~\eqref{eq:Bihamonic_equation}.
From here on we will refer to $\enorm[\grid_k]{\cdot}$ as $\enorm[k]{\cdot}$.
\begin{thm}\label{thm:main}
  We have that 
  \begin{align*}
    \est_k(\grid_k) \to 0\quad \text{and }\quad
  \enorm[k]{u-u_k} \to 0 \quad \text{as }k \to \infty.
\end{align*}
\end{thm}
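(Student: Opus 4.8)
The plan is to follow the now-established framework of Kreuzer and Georgoulis~\cite{KreuzerGeorgoulis:17}, but with new technical ingredients to handle the missing $C^1$-conforming subspace. First I would associate to the mesh sequence $(\grid_k)_{k\in\N_0}$ the usual partition $\grid_k = \grid_k^+ \cup \grid_k^0$, where $\grid_k^+ \definedas \bigcap_{\ell \ge k}\grid_\ell$ collects the elements that are never refined again, and $\grid_k^0 \definedas \grid_k\setminus\grid_k^+$. A standard argument (see~\cite{MorinSiebertVeeser:08,Siebert:11}) shows that the mesh-size functions $\hG[\grid_k]$ converge monotonically in $L^\infty(\Omega)$ to a limit $h_\infty$, that $h_\infty$ vanishes on the set $\Omega^0 \definedas \bigcup_k \Omega(\grid_k^0)$ swept out by refinement, and that $\chi_{\Omega(\grid_k^0)} \to 0$ boundedly a.e. One then introduces the limit space $\Vinfty$ as a suitable closure of $\bigcup_k \Vk$; the first key technical result, Lemma~\ref{lem:limit_space_is_Hilbert}, asserts that $\Vinfty$, equipped with a limiting energy inner product $\enorm[\infty]{\cdot}$, is a Hilbert space. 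This is exactly where the absence of a $C^1$-conforming subspace bites: in~\cite{KreuzerGeorgoulis:17} Hilbert-ness of the limit space is obtained by exploiting conforming subspaces, whereas here one must instead use the embeddings of Propositions~\ref{prop:Friedrich_type_estimate_global} and~\ref{prop:total_variation_of_gradient_bounded_by_energy} — controlling $\nabla v$ in $BV(\Omega)^2$ and $v$ in $H_0^1(\Omega)$ uniformly along the sequence — to extract limits and to identify the limiting Hessian and jump contributions.

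Next I would prove convergence of the discrete solutions $u_k \to u_\infty$ in the limit space, i.e. Theorem~\ref{thm:u_k_to_u_infty}, where $u_\infty \in \Vinfty$ solves the limiting variational problem $\bi[\infty](u_\infty,v) = \int_\Omega f v\dx$ for all $v\in\Vinfty$. Since, unlike in~\cite{KreuzerGeorgoulis:17}, the generalised medius analysis of Gudi~\cite{Gudi:10} is not available in this non-conforming fourth-order setting, I would argue directly: the uniform stability bound~\eqref{est:uniform_stab_solution} gives $\sup_k \enorm[k]{u_k} \Cleq \norm{f}$, hence — using Propositions~\ref{prop:Friedrich_type_estimate_global} and~\ref{prop:total_variation_of_gradient_bounded_by_energy} — a subsequence of $(u_k)$ converges weakly in $H_0^1(\Omega)$, with gradients converging weakly-$*$ in $BV(\Omega)^2$ and the piecewise Hessians and scaled jumps converging weakly in the appropriate $L^2$ spaces. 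The limit is shown to lie in $\Vinfty$; passing to the limit in the Galerkin identity~\eqref{eq:C0IP_Problem}, using that $\hG[k]^{-1}$ on the persistently-refined skeleton blows up so that the limiting jump penalty forces the correct conformity on $\Omega^0$, one identifies the weak limit as $u_\infty$, and coercivity (Proposition~\ref{prop:coerc_and_cont_discrete}, in the limit) upgrades weak to strong convergence $\enorm[k]{u_k - u_\infty}\to 0$; a subsequence argument then gives convergence of the full sequence.

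Then I would show $\est_k(\grid_k)\to 0$. Split $\est_k(\grid_k)^2 = \est_k(\grid_k^0)^2 + \est_k(\grid_k^+)^2$. On $\grid_k^+$ the mesh is eventually stationary and $u_k \to u_\infty$ strongly, so the volume term $\int_\elm \hG^4|f|^2$, the jump term $\int \hG\jumpn{2}{u_k}^2$ and the penalty term $\sigma^2\int\hG^{-1}\jumpn{}{u_k}^2$ all converge, and the limiting value must be $0$ because $u_\infty$ satisfies the limiting Galerkin equations on that part of the skeleton (this is where one uses that $u_\infty$ solves the limit problem; compare the corresponding argument in~\cite{KreuzerGeorgoulis:17}). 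On $\grid_k^0$ one uses the efficiency estimate, Proposition~\ref{prop:lower_bounds_u}, applied with $v=u_k$ on the refined region, together with $|\Omega(\grid_k^0)|\to 0$, the uniform energy bound, and $\osc(\grid_k,f)\to 0$ (which follows from $|\Omega(\grid_k^0)|\to 0$ and $\hG[k]^4\le\hG[\grid_0]^4$) to conclude $\est_k(\grid_k^0)\to 0$. Finally, the marking property~\eqref{df:marked_elements} with $g$ continuous at $0$ and $g(0)=0$ transfers smallness of $\est_k(\marked_k)\le\est_k(\grid_k^0)$ to smallness of $\max_{\elm\in\grid_k\setminus\marked_k}\est_k(\elm)$, and combining the two regimes yields $\est_k(\grid_k)\to 0$. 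Reliability, Proposition~\ref{prop:upper_bound_estimator}, then gives $\enorm[k]{u - u_k}\Cleq\est_k(\grid_k)\to 0$, which in particular forces $u_\infty = u$ and completes the proof.

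The main obstacle I anticipate is establishing that $\Vinfty$ is a Hilbert space and that one may pass to the limit in the non-conforming bilinear form without any conforming-subspace crutch: one must simultaneously track the weak-$BV$ limit of $\nabla u_k$, the weak-$L^2$ limit of the piecewise Hessians, and the behaviour of the $h^{-1}$-weighted jump penalty on the persistently-refined skeleton, and show these assemble consistently into $\bi[\infty]$ and into a genuine inner product. This is precisely the place where Lemma~\ref{lem:limit_space_is_Hilbert} and Theorem~\ref{thm:u_k_to_u_infty} require the new tools announced in the introduction.
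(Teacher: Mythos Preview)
Your overall strategy --- construct the limit space $\V_\infty$, prove it is Hilbert, establish $\enorm[k]{u_k-u_\infty}\to 0$ for the limit solution $u_\infty$, and then connect $u_\infty$ to $u$ via the estimator --- matches the paper's framework. However, the estimator-convergence step contains two genuine gaps.

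First, your claims that $|\Omega(\grid_k^0)|\to 0$ and $\chi_{\Omega(\grid_k^0)}\to 0$ a.e.\ are false in general. The region $\Omega(\grid_k^0)=\Omega_k^-$ always contains the set $\Omega^-$ of points where refinement occurs infinitely often, and $\Omega^-$ may well have positive measure; what vanishes is the \emph{mesh size} on $\Omega_k^-$ (Lemma~\ref{lem:Omega_star_and_h_vanishing}), not its measure. Consequently your efficiency argument on $\grid_k^0$ collapses, and likewise $\osc(\grid_k,f)\to 0$ is not true in general, since oscillation on $\grid^+$ need not vanish.

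Second, and more seriously, your argument that $\est_k(\grid_k^+)\to 0$ ``because $u_\infty$ satisfies the limiting Galerkin equations'' does not work. The limit problem~\eqref{eq:C0IP_limit} is itself a non-conforming problem on $\V_\infty$: its solution $u_\infty$ may carry non-trivial jumps $\jumpn{}{u_\infty}$ on $\sides^+$, and there is no reason the indicators should vanish there merely because $u_\infty$ solves it. The marking strategy is precisely what forces this, and the paper's order of argument is inverted relative to yours. One first uses $\enorm[k]{u_\infty-u_k}\to 0$ together with $\norm[L^\infty(\Omega)]{h_k\chi_{\Omega_k^-}}\to 0$ to show $\max_{\elm\in\grid_k^-}\est_k(\elm)\to 0$ (Lemma~\ref{lem:etamax->0}); the marking condition~\eqref{df:marked_elements} then propagates this to $\max_{\elm\in\grid_k}\est_k(\elm)\to 0$, and a dominated-convergence argument yields $\est_k(\grid_k^{1+})\to 0$ (Lemma~\ref{lem:est_Gk+}). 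Only \emph{then} does one conclude that the jumps of $u_\infty$ vanish, that $u_\infty\in H_0^2(\Omega)$, and finally that $u_\infty=u$ (Lemma~\ref{lem:u=uinfty}). Reliability and efficiency are invoked at the very end, with the exact solution already identified as $u_\infty$, to obtain $\est_k(\grid_k)\to 0$. Your proposal treats marking as an afterthought and tries to deduce $u_\infty=u$ from estimator convergence; the dependency actually runs the other way.
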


\section{Proof of the main result Theorem~\ref{thm:main}}
\label{sec:proof_main_result}
The proof of convergence of the \ACIPGM is
based on ideas of \cite{MorinSiebertVeeser:08,Siebert:11} for
conforming elements and its generalisation \cite{KreuzerGeorgoulis:17}
to adaptive discontinuous Galerkin methods for the Poisson problem.
For the sake of clarity, in this section, we present the main ideas of the proof of
Theorem~\ref{thm:main} following the ideas
of~\cite{KreuzerGeorgoulis:17}. In contrast to the latter result here
we are faced with the problem
that $\VG$ contains no proper conforming subspace. This requires
new techniques of proof for the two key auxiliary results,
Theorem~\ref{thm:u_k_to_u_infty} and
Lemma~\ref{lem:limit_space_is_Hilbert}, which proofs are postponed to
Section~\ref{sec:proof_u_k_to_u_infty} below.

\subsection{Sequence of Partitions}
Following~\cite{MorinSiebertVeeser:08,Siebert:11,KreuzerGeorgoulis:19},
we split the domain $\Omega$ into essentially two parts according to
whether the mesh-size function $\hG[k]\definedas\hG[\grid_k]$ vanishes or not.
In order to make this rigorous, we define the set of eventually
never refined elements by
\begin{align}\label{eq:leaf_nodes}
\grid^+\definedas \bigcup_{k \geq 0} \bigcap _{l \geq k}
  \grid_l\qquad\text{with corresponding domain}\qquad \Omega^+\definedas \Omega(\grid^+).
\end{align}
Additionally, we denote the complementary domain $\Omega^-=
\Omega \setminus \Omega^+$.

For $k \in \N_0,$ we define  $\gridk^{+}:=\grid_k \cap \grid^+ $ and
for $j\ge 1$ 
\begin{align*}
  \begin{aligned}
    \gridk^{j+}&\definedas \{\elm \in \grid_k\colon
    \neighk^j(\elm)\subset\gridk^+\}	
    = \{\elm \in \grid_k\colon
 \neighk(\elm)\subset\gridk^{(j-1)+}\}	
,
\\
\grid_k^{j -}&:=\grid_k \setminus  \grid_k^{j+},
\end{aligned}                       
\end{align*}
where we used $\gridk^{0+}\definedas\gridk^+$ and 
$\gridk^{0-}\definedas\gridk^-$ in the identities when
$j=0$. 
For the corresponding domains we denote
$\Omega_k^{j-}:=\Omega(\grid_k^{j-})$ and 
$\Omega_k^{j+}:=\Omega(\grid_k^{j+})$.
Moreover, we adopt the above notations  for the corresponding faces,
e.g. $\sides^{j-}:=\sides(\grid_k^{j-})$,
$\sides^{j+}:=\sides(\grid_k^{j+})$.  
We remark that we need the above definitions of $\gridk^{j-}$ and
$\gridk^{j+}$, $j>0$, 
for technical reasons. In fact, our analysis involves Cl{\'e}ment type
quasi-interpolations for which local stability estimates involve
neighbourhoods. However, for different but fixed 
$j$s the above sets behave asymptotically similar for $k\to
\infty$. To see this, the next key result from \cite[Lemma
4.1]{MorinSiebertVeeser:08} states that neighbours of never refined  elements are
eventually also never refined again.
\begin{lem}\label{lem:neighbourhood_G+}
  For $\elm \in \grid^+$ 
  there exists a constant $L=L(\elm) \in \N_0$
  such that   \begin{align*}
   \neighk(\elm)= \neighk[L](\elm)
  \end{align*}
for all $k \geq L$. In particular, we have $\neighk(\elm) \subset \grid^+$ for
all $k \geq L$.
\end{lem}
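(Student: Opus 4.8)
The statement is exactly \cite[Lemma 4.1]{MorinSiebertVeeser:08}, so the plan is to reproduce its short argument using only properties of bisection refinement and the definition of $\grid^+$. Fix $\elm\in\grid^+$. By~\eqref{eq:leaf_nodes} there is an index $k_0$ with $\elm\in\grid_l$ for all $l\ge k_0$, i.e.\ $\elm$ is never refined again from step $k_0$ on. The key geometric input is that, under newest-vertex bisection of a conforming shape-regular mesh, an element can only be refined if at least one of its neighbours is simultaneously refined (refinement propagates through faces to keep conformity), and, crucially, the diameter of the elements that can ever touch $\elm$ while $\elm$ itself stays fixed is bounded below: a triangle adjacent to the \emph{fixed} element $\elm$ can be bisected only finitely many times before conformity would force $\elm$ to be bisected too. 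Hence there is a uniform lower bound $h_{\min}(\elm)>0$ on $\hG[k][\elm']$ for every $\elm'\in\neighk(\elm)$ and every $k\ge k_0$.

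From this lower bound I would argue as follows. The elements of $\grid_k$ meeting $\elm$ all lie in the fixed bounded patch $\omega_{\grid_{k_0}}^{m}(\elm)$ for some fixed $m$ (since they cannot be smaller than $h_{\min}(\elm)$ and the patch has finite diameter, only finitely many such elements can ever occur, and by shape regularity they are contained in a bounded number of refinement levels of the macro-patch around $\elm$). Consequently the sequence of neighbourhoods $(\neighk(\elm))_{k\ge k_0}$ takes values in a \emph{finite} set of admissible configurations. Moreover this sequence is monotone in the sense that once a neighbour $\elm'$ of $\elm$ gets refined it is replaced by strictly smaller elements and can never reappear; so $\neighk(\elm)$ can change only finitely often. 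Therefore it stabilises: there is $L=L(\elm)\ge k_0$ with $\neighk(\elm)=\neighk[L](\elm)$ for all $k\ge L$.

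It remains to see $\neighk(\elm)\subset\grid^+$ for $k\ge L$. If some $\elm'\in\neighk[L](\elm)=\neighk(\elm)$ were refined at a later stage $l>L$, then $\elm'\notin\grid_{l+1}$, so $\neighk[l+1](\elm)\neq\neighk[L](\elm)$, contradicting stabilisation. Hence every $\elm'\in\neighk[L](\elm)$ is never refined after step $L$, i.e.\ $\elm'\in\grid^+$, which gives $\neighk(\elm)=\neighk[L](\elm)\subset\grid^+$ for all $k\ge L$.

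The only genuine obstacle is the geometric claim that a triangle adjacent to a fixed (never-refined) element can be bisected only finitely many times; this is precisely where the properties of the bisection rule (finite number of similarity classes, conformity-preserving closure, and the fact that refining a small neighbour eventually forces refinement of $\elm$) enter. This is classical for newest-vertex bisection and is taken from \cite{Baensch:91,Kossaczky:94,Maubach:95,MorinSiebertVeeser:08}; everything else is bookkeeping with the finiteness and monotonicity just described.
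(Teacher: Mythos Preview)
Your proposal is correct and follows the same route as the paper, which does not give its own proof but simply cites \cite[Lemma~4.1]{MorinSiebertVeeser:08}. The core of the argument---local quasi-uniformity of $\grids$ bounds $|\elm'|$ from below for every $\elm'\in\neighk(\elm)$ once $\elm$ is frozen, hence neighbours can be bisected only finitely often and the patch stabilises---is exactly what is in that reference; one small remark: the sentence ``an element can only be refined if at least one of its neighbours is simultaneously refined'' is not literally true (a single element can be marked and bisected in isolation), so it is cleaner to invoke the uniform bound $\sup_{\grid\in\grids}\max_{\elm'\in N_\grid(\elm)}|\elm|/|\elm'|\Cleq 1$ stated in the paper directly rather than the conformity-propagation heuristic.
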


The next lemma essentially goes back to \cite[(4.15) and Corollary
4.1]{MorinSiebertVeeser:08}. 
\begin{lem}\label{lem:Omega_star_and_h_vanishing}
For $j \in \N_0$  we have $\lim_{k \to \infty}
  \norm[L^\infty(\Omega) ]{\hG[k] \chi_{\Omega^{j-}_k} }=0,$
where
  $\chi_{\Omega^{j-}_k}$  denotes the characteristic function of
  $\Omega^{j-}_k$.
  Moreover, $\abs{\Omega^{j -}_k \setminus
  \Omega^-} =\abs{\Omega^{+}\setminus
  \Omega^{j+}_k} \to 0$
as $k \to \infty$. 
\end{lem}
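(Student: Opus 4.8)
The two assertions of Lemma~\ref{lem:Omega_star_and_h_vanishing} are of slightly different nature: the first is a statement about the mesh-size function on the "refined" part $\Omega_k^{j-}$, the second is a measure-theoretic statement. I would treat them in turn, reducing the case $j\ge 1$ to the case $j=0$ at the very end using Lemma~\ref{lem:neighbourhood_G+}.

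\textbf{Step 1 (the case $j=0$, mesh-size vanishes).} Fix $x\in\Omega$ and let $\elm_k\in\grid_k$ be the element containing $x$ (away from the skeleton). If $x\in\Omega^-$, I claim $\hG[k](\elm_k)\to 0$: either $x$ lies in infinitely many elements that get refined, or from some $k$ on its element is never refined again — but in the latter case that element belongs to $\grid^+$, contradicting $x\in\Omega^-$. Hence the element of $\grid_k$ containing $x$ is refined for infinitely many $k$, and since every refinement strictly decreases the diameter by a fixed factor (bisection), $\hG[k](\elm_k)\to 0$ pointwise on $\Omega^-$. To upgrade pointwise to $L^\infty$-convergence of $\hG[k]\chi_{\Omega^{0-}_k}$ one uses a compactness/finiteness argument: for $\eps>0$, only finitely many elements of the infinite refinement forest $\grids$ have diameter $\ge\eps$, so after finitely many steps every element of $\grid_k$ meeting $\Omega^-$ — equivalently, every element of $\grid_k^{0-}=\grid_k\setminus\grid_k^+$ — has diameter $<\eps$; this is precisely \cite[(4.15)]{MorinSiebertVeeser:08}. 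Therefore $\norm[L^\infty(\Omega)]{\hG[k]\chi_{\Omega^{0-}_k}}\to 0$.

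\textbf{Step 2 (the measure identity and its vanishing, $j=0$).} The identity $\abs{\Omega_k^{0-}\setminus\Omega^-}=\abs{\Omega^+\setminus\Omega_k^{0+}}$ is pure set algebra: since $\grid_k=\grid_k^{0+}\,\dot\cup\,\grid_k^{0-}$ and $\grid^+\subset\grids$ consists of whole elements, $\Omega=\Omega_k^{0+}\,\dot\cup\,\Omega_k^{0-}$ up to null sets, and likewise $\Omega=\Omega^+\,\dot\cup\,\Omega^-$; subtracting, $\abs{\Omega_k^{0-}}-\abs{\Omega^-}=\abs{\Omega^+}-\abs{\Omega_k^{0+}}$, and since $\Omega^-\subset\Omega_k^{0-}$ and $\Omega_k^{0+}\subset\Omega^+$ (because $\grid_k^+\subset\grid^+$) this rearranges to the claimed equality of the two nonnegative quantities. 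Vanishing then follows from monotone convergence: the sets $\grid_k^+=\grid_k\cap\grid^+$ increase to $\grid^+$ as $k\to\infty$ — indeed by definition of $\grid^+$ every $\elm\in\grid^+$ lies in $\bigcap_{l\ge k}\grid_l$ for some $k$, hence in $\grid_l^+$ for all $l\ge k$ — so $\abs{\Omega^+\setminus\Omega_k^{0+}}=\abs{\Omega(\grid^+\setminus\grid_k^+)}\to 0$.

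\textbf{Step 3 (passing to general $j$).} For $j\ge 1$ I would show $\abs{\Omega_k^{j-}\setminus\Omega_k^{0-}}\to 0$, which together with Steps 1–2 gives both assertions for all $j$. By definition $\elm\in\grid_k^{j-}\setminus\grid_k^{0-}$ means $\elm\in\grid_k^+$ but $\neighk^j(\elm)\not\subset\grid_k^+$; by Lemma~\ref{lem:neighbourhood_G+}, for each fixed $\elm\in\grid^+$ the $j$-th neighbourhood stabilises inside $\grid^+$ for $k$ large, so any such "bad" $\elm$ must have a neighbour (within distance $j$) that is \emph{not} eventually never refined, i.e. lies in $\Omega^-$; local quasi-uniformity and the uniform bound on $\#N_\grid(\elm)$ then control $\abs{\Omega_k^{j-}\setminus\Omega_k^{0-}}$ by (a constant times) the measure of a fixed-width neighbourhood of $\Omega^-$ in meshes whose local size there tends to $0$, which vanishes. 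The mesh-size statement for $j$ is then immediate since $\hG[k]\chi_{\Omega_k^{j-}}\le\hG[k]\chi_{\Omega_k^{0-}}$ pointwise (as $\Omega_k^{j-}\supset\Omega_k^{0-}$ would be the wrong inclusion — in fact $\grid_k^{j-}\supset\grid_k^{0-}$, so one bounds on the larger set directly): on $\Omega_k^{j-}$, every element has a neighbour of comparable size in $\Omega_k^{(j-1)-}$, and inductively one reduces to $j=0$ using shape regularity.

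\textbf{Main obstacle.} The only genuinely non-trivial point is Step 1's upgrade from pointwise to uniform decay of $\hG[k]$ on the refined part — equivalently, the finiteness argument that at most finitely many elements of $\grids$ exceed a given size and hence $\grid_k^{0-}$ is "eventually fine everywhere". Everything else is bookkeeping with set inclusions, the definition of $\grid^+$, and shape regularity; I would lean on \cite[Lemma 4.1, (4.15), Corollary 4.1]{MorinSiebertVeeser:08} for this core mechanism and present the $j$-neighbourhood reduction as the new (but routine) addition.
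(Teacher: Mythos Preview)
Your Steps~1 and~2 are correct and match the paper's ingredients, and your inductive neighbour argument for the mesh-size in Step~3 is also valid --- indeed slightly cleaner than the paper's, which instead bounds $h_\elm\eqsim|\elm|^{1/2}\le|\Omega_k^{j-}\setminus\Omega_k^{1-}|^{1/2}$ for $\elm\in\grid_k^{j-}\setminus\grid_k^{1-}$ and therefore needs the measure statement to be established first, whereas your induction via shape regularity is self-contained.

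The measure part of Step~3, however, has a genuine gap. First, the assertion that a ``bad'' $\elm\in\grid_k^{j-}\setminus\grid_k^{0-}$ has a $j$-neighbour that ``lies in $\Omega^-$'' is false: the neighbour $\elm'\in\neighk^j(\elm)\setminus\grid_k^+$ indeed satisfies $\elm'\notin\grid^+$ (so it will be refined at some later step), but its interior may lie entirely in $\Omega^+$ once the descendants of $\elm'$ stabilise in $\grid^+$. Second, and more seriously, the quasi-uniformity bound you sketch cannot close the argument: pairing each bad $\elm$ with such a neighbour $\elm'\in\grid_k^{0-}$ and using $|\elm|\lesssim|\elm'|$ together with the uniform bound on $\#\neighk^j(\elm')$ only yields $|\Omega_k^{j-}\setminus\Omega_k^{0-}|\lesssim|\Omega_k^{0-}|$, which tends to $|\Omega^-|$, not to zero; and interpreting the ``fixed-width neighbourhood'' geometrically fails because the perimeter of $\Omega_k^{0-}$ is not controlled. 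The correct completion is to use directly the observation you already extracted from Lemma~\ref{lem:neighbourhood_G+}: iterating it $j$ times shows that each fixed $\elm\in\grid^+$ is eventually in $\grid_k^{j+}$; since $\grid_\ell^+$ is finite for each $\ell$, there exists $L=L(\ell)\ge\ell$ with $\grid_\ell^+\subset\grid_L^{j+}$, whence $|\Omega^+\setminus\Omega_{L}^{j+}|\le|\Omega^+\setminus\Omega_\ell^+|\to0$, and monotonicity of $k\mapsto\Omega_k^{j+}$ upgrades this subsequence convergence to the full sequence. This is precisely the paper's route; the quasi-uniformity detour should be dropped.
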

\begin{proof}
  In order to see that  $\abs{\Omega^{+}\setminus
  \Omega^{j+}_k} \to 0$
   as $k \to \infty$, we observe from Lemma~\ref{lem:neighbourhood_G+}
   that for $\ell\in\N$, there exists $L=L(\ell)\ge \ell$, such that
   $\grid_\ell^+\subset\grid_L^{j+}$ since $\grid_\ell^+$    
   contains only finite many elements. Consequently, we have
   \begin{align*}
     |\Omega^+\setminus\Omega_{L(\ell)}^{j+}|\le
     |\Omega^+\setminus\Omega_{\ell}^{+}|\to 0 \qquad\text{as}~\ell\to\infty,
   \end{align*}
   i.e. we have proved the claim for a subsequence. Since the sequence $\{|\Omega^{+}\setminus
     \Omega^{j+}_k|\}_k$ is monotone, it must vanish as a whole.

   The first claim follows for $j=1$ from \cite[Corollary
   3.3]{Siebert:11}. By shape regularity, we have for $j>1$ that
   \begin{align*}
     h_\elm\eqsim |\elm|^{1/2}\le
     |\Omega^{j-}_k\setminus\Omega^{1-}_k|^{1/2}\le |\Omega^{j-}_k\setminus\Omega^{-}|^{1/2}\qquad\text{for all}~\elm\in \gridk^{j-}\setminus\gridk^{1-}.
   \end{align*}
   Consequently, we have
   \begin{align*}
     \norm[L^\infty(\Omega) ]{\hG[k] \chi_{\Omega^{j-}_k} }&\le
     \norm[L^\infty(\Omega) ]{\hG[k] \chi_{\Omega^{1-}_k}
     }+\norm[L^\infty(\Omega) ]{\hG[k]
     \chi_{\Omega^{j-}_k\setminus\Omega_k^{1-}} }\\
     &\le
     \norm[L^\infty(\Omega) ]{\hG[k] \chi_{\Omega^{1-}_k}}+
     |\Omega^{j-}_k\setminus\Omega^{1-}_k|^{1/2}\to 0
   \end{align*}
   as $k\to\infty$, which concludes the proof.  
\end{proof}

\subsection{The limit space}
In this section we discuss the limit of the finite element spaces
$\V_k$. Following the ideas in
\cite[Section 3.2]{KreuzerGeorgoulis:19}, we define 
\begin{align*}
  \V_\infty:=\big\{v\in H^1_0(\Omega)& \mid \nabla v \in BV(\Omega)^2,~  v|_{\Omega^-}\in
  H^2_{\partial\Omega\cap\partial\Omega^-}(\Omega^-),~
  v|_\elm\in  \P_r(\elm),~\forall \elm\in\grid^+,\\
   &\quad\text{such that}~\exists \{v_k\}_{k\in\N_0}, v_k\in\V_k
    ~\text{with}~\lim_{k\to\infty}\enorm[k]{v-v_k}=0
    \\
  &\quad\text{and}~
    \limsup_{k\to\infty}\enorm[k]{v_k}<\infty
  \big\}.
\end{align*}
By $  H^2_{\partial\Omega\cap\partial\Omega^-}(\Omega^-)$ we denote
the space of functions from $H^2_0(\Omega)$ restricted to the 
domain $\Omega^-$. For a function $v \in\V_\infty$ the piecewise Hessian
$\DG^2 v \in L^2(\Omega)^{2 \times 2}$ is defined by
\begin{align}\label{df:D2pw}
  \DG^2 v|_{\Omega^-}:= D^2v|_{\Omega ^-} ~ \text{on}~ \Omega^-\quad \text{and}\quad
\DG^2 v|_\elm:= D^2v (x) |_\elm ~\text{on}~ \elm \in \grid^+,
\end{align}
and we have from $\enorm[k]{v-v_k}\to 0$ as $k\to\infty$, for the
distributional Hessian, that
\begin{align*}
  \langle D^2v, \vec\varphi \rangle=-\langle \nabla v, \divo
  \vec\varphi\rangle =\int_\Omega \DG^2 v
  \colon\vec\varphi\dx-\int_{\sides^+}\jumpn{}{v}\vec\varphi \normal\cdot\normal\ds.
\end{align*}
The fact that $\nabla\V_\infty\subset BV(\Omega)^2$ is
motivated by Proposition~\ref{prop:total_variation_of_gradient_bounded_by_energy}.

We will use the following bilinear form on $\V_\infty$: For $v,w \in \V_{\infty}$, we define
\begin{align*}
\langle v,w \rangle _\infty := \int_\Omega \DG^2 v \colon\DG^2 w \dx +
  \sigma \int_{\sides^+} h_+^{-1} \jumpn{}{ v} \jumpn{}{ w} \ds ,
\end{align*}
where we set $h_+:=h_{\grid^+}$ and $\sides ^+:= \sides(
\grid^+)$. 

The induced norm is denoted by $\enorm[\infty]{v}= \langle v,v \rangle
_{\infty}^{1/2}$. Note that from the definition of $\V_\infty$, we have
$\nabla v  \in BV(\Omega)^2$. 
Consequently, we have from \cite[Theorem 3.88]{Ambrosio:2000} that the
$L^1$-trace of $\nabla v$ exists for all 
$\side \in \sides$ and for all $k \in \N_0$. Therefore, the jump
terms are  measurable with respect to the $1$-dimensional
Hausdorff measure on $\sides$, and we are able to evaluate the $k$-norm
$\enorm[k]{v}$ for $v \in \V_\infty$.
\begin{prop}\label{prop:boundedness_of_energy_norm}
  For $v \in \V_\infty$, we have
  \begin{align*}
    \enorm[k]{v} \nearrow \enorm[\infty]{v}< \infty \quad \text{as }k \to \infty.
  \end{align*}
In particular, for fixed $\ell \in \N_0$,
let $\elm
\in \grid_\ell$; then, we have 
\begin{align*}
  \int_{\set{\side \in \sidesk \colon \side \subset \elm}}
  \hG[k]^{-1} \jumpn{}{v}^2 \ds \nearrow  \int_{\set{\side \in \sides^+ \colon \side \subset \elm}}
  \hG[+]^{-1} \jumpn{}{v}^2 \ds \quad \text{as }k \to \infty.
\end{align*}
\end{prop}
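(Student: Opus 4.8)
The plan is to split $\enorm[k]{v}^2$ into its Hessian part and its penalty part and to treat them separately. For $v\in\V_\infty$ the Hessian part of $\enorm[k]{v}^2$ is $\int_\Omega\abs{\DG^2 v}^2\dx$ with the piecewise Hessian~\eqref{df:D2pw}; this is independent of $k$ and already coincides with the Hessian part of $\enorm[\infty]{v}^2$. Hence the statement reduces to showing that the penalty terms $B_k\definedas\sigma\int_{\sidesk}\hG[k]^{-1}\jumpn{}{v}^2\ds$ increase monotonically and converge to $B_\infty\definedas\sigma\int_{\sides^+}\hG[+]^{-1}\jumpn{}{v}^2\ds$. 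Finiteness of the limit then comes for free: for a sequence $\{v_k\}_k$ as in the definition of $\V_\infty$ one has $\enorm[k]{v}\le\enorm[k]{v-v_k}+\enorm[k]{v_k}$, whose right-hand side stays bounded as $k\to\infty$, so a monotone sequence $\enorm[k]{v}$ must have a finite limit.

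\emph{Monotonicity.} For this I would note that, when $\grid_k$ is refined, each side $\side\in\sidesk$ is split into sides of the finer mesh, each of length at most $\abs{\side}$; since the penalty weight on a side is its reciprocal length, the weighted jump integrals over these pieces sum to at least $\int_\side\hG[k]^{-1}\jumpn{}{v}^2\ds$, while the additional sides (interior to elements of $\grid_k$) only contribute non-negative terms. Summation then gives $B_k\le B_{k+1}$, and the same argument restricted to the sides contained in a fixed $\elm\in\grid_\ell$ gives the monotonicity in the second part of the proposition.

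\emph{Upper bound.} This is the heart of the matter. Since $v\in\V_\infty$ we have $\nabla v\in BV(\Omega)^2$, and because $v$ is piecewise polynomial on $\grid^+$ and $v|_{\Omega^-}\in H^2(\Omega^-)$, the jumps of $\nabla v$ are carried, up to a $1$-dimensional Hausdorff nullset, by the faces of $\grid^+$; hence only those $\side\in\sidesk$ whose intersection with $\bigcup\sides^+$ has positive measure contribute to $B_k$. The decisive observation is that no proper descendant of an element of $\grid^+$ ever occurs in a mesh $\grid_k$ — once such an element appears it belongs to every later mesh, and the refinement only subdivides — so on $\Omega^+$ the mesh $\grid_k$ is never finer than $\grid^+$, and the part of such a $\side$ carrying the jump is an essentially disjoint union of faces of $\grid^+$, each of length at most $\abs{\side}$. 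On every one of these faces $\hG[+]^{-1}\ge\hG[k]^{-1}=\abs{\side}^{-1}$, so the contribution of $\side$ to $B_k$ is bounded by the total $\hG[+]^{-1}$-weighted jump over its constituents; as the constituent faces coming from distinct $\grid_k$-sides form pairwise disjoint subcollections of $\sides^+$, summation yields $B_k\le B_\infty$, hence also $\enorm[k]{v}\le\enorm[\infty]{v}$, and likewise after restricting every sum to the sides contained in a fixed $\elm\in\grid_\ell$. I expect this to be the main obstacle: one has to rule out that refinement drives the penalty weight $\hG[k]^{-1}$ above $\hG[+]^{-1}$ on $\Omega^+$ while at the same time controlling the support of the jumps of $\nabla v$; the rest is routine monotone-convergence bookkeeping.

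\emph{Lower bound and conclusion.} Since $\grid_k^+=\grid_k\cap\grid^+$ increases to $\grid^+$, every face of $\sides^+$ is, from some level on, a face of $\sides(\grid_k^+)\subset\sidesk$, on which $\hG[k]=\hG[+]$; monotone convergence then gives $\sigma\int_{\sides(\grid_k^+)}\hG[+]^{-1}\jumpn{}{v}^2\ds\nearrow B_\infty$, and since this quantity is a lower bound for $B_k$ we obtain $\liminf_k B_k\ge B_\infty$. Together with the upper bound and the monotonicity this proves $B_k\nearrow B_\infty<\infty$, i.e.\ $\enorm[k]{v}\nearrow\enorm[\infty]{v}<\infty$. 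Finally, the localised statement follows from the same chain of estimates with every sum restricted to $\{\side\subset\elm\}$, using in addition that for large $k$ the faces of $\sides(\grid_k^+)$ lying in $\elm$ exhaust $\{\side\in\sides^+\colon\side\subset\elm\}$.
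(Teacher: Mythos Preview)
Your argument is correct, and the boundedness and monotonicity parts coincide with the paper's. Where you diverge is in identifying the limit. The paper does \emph{not} establish $B_k\le B_\infty$ directly; instead, having a bounded monotone sequence, it exploits the Cauchy property: for large $m>k$ the difference $\enorm[m]{v}^2-\enorm[k]{v}^2$ dominates $\sigma\int_{\sidesk\setminus\sidesk^+}\hG[k]^{-1}\jumpn{}{v}^2\ds$ (using that $\hG[m]\le 2^{-1}\hG[k]$ on refined sides), so this ``bad'' part vanishes, and the remaining $\sidesk^+$-contribution converges to $B_\infty$ by monotone convergence. Your route instead sandwiches $B_k$ directly: the upper bound $B_k\le B_\infty$ comes from the geometric observation that $\jumpn{}{v}$ is supported on $\bigcup\sides^+$ and that any $\side'\in\sides^+$ meeting a $\side\in\sidesk$ in positive measure must satisfy $\side'\subset\side$ (hence $\hG[+]^{-1}\ge\hG[k]^{-1}$ there), while the lower bound comes from $\sides(\gridk^+)\nearrow\sides^+$.

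Your approach is more structural and yields the a~priori inequality $\enorm[k]{v}\le\enorm[\infty]{v}$ for every $k$, not just in the limit, which is slightly stronger than what the paper obtains en route. The paper's argument is softer: it never needs to analyse the support of $\jumpn{}{v}$ or the relation between $\grid_k$ and $\grid^+$ on $\Omega^+$, only the Cauchy property of $\{\enorm[k]{v}\}_k$. Both are equally short; yours buys a cleaner inequality, the paper's requires less geometric input.
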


\begin{proof}
  The assertion follows along the same arguments used in 
  \cite[Proposition 12]{KreuzerGeorgoulis:17}, however, we provide the
  short 
  proof in order to keep the paper as self contained as possible.
  
  For $v \in \V_\infty$ there exists a sequence 
  $v_k \in \V_k$, ${k \in \N_0}$, such that
  $\enorm[k]{v-v_k}\to0$ as $k\to\infty$ and $\limsup_{k \to \infty}
  \enorm[k]{v_k}< \infty$. Therefore,
  $\{\enorm[k]{v}\}_{k\in\N_0}$ is bounded, since 
  $
  \enorm[k]{v} \leq
  \enorm[k]{v-v_k}+\enorm[k]{v_k}<\infty$ uniformly in $k$.
  For $m \geq k$ we have, by inclusion $\bigcup_{\side \in
    \sidesk} \side \subset \bigcup_{\side \in \sidesk[m]}\side$ and
  mesh-size reduction $\hG[k]^{-1} \leq \hG[m]^{-1}$, that
  \begin{align*}
  \int_{\sidesk} \hG[k]^{-1} \jumpn{}{v}^2 \ds \leq  \int_{\sidesk}
    \hG[m]^{-1} \jumpn{}{v}^2 \ds \leq  \int_{\sidesk[m]}
    \hG[m]^{-1} \jumpn{}{v}^2 \ds. 
  \end{align*}
  Consequently, we have $\enorm[k]{v} \leq \enorm[m]{v}$ and
  $\set{\enorm[k]{v}}_{k \in \N_0}$ converges. In
  particular, for $\epsilon>0$, the exists $L=L(\epsilon)\in \N_0$ such that
  for all $k\ge L$ and some sufficiently large $m> k$, we
  have  
  \begin{align*}
    \epsilon> \abs{\enorm[m]{v}^2-\enorm[k]{v}^2} &=\sigma \int_{\sidesk[m]
                                                    \setminus (\sidesk[k] \cap \sidesk[m])} \hG[m]^{-1}\jumpn{}{v}^2 \ds
                                                    -
                                                    \sigma \int_{\sidesk[k]
                                                    \setminus (\sidesk[k] \cap \sidesk[m])} \hG[k]^{-1}\jumpn{}{v}^2
                                                    \ds\\
                                                  &\geq {\sigma} \int_{\sidesk[k] \setminus \sidesk[k]^+}
                                                    \hG[k]^{-1}\jumpn{}{v}^2 \ds.
\end{align*}
This follows from the fact that $\hG[m]|_\side \leq 2^{-1}\hG[k]|_\side$ for all $\side \in \sidesk[m]
\setminus (\sidesk[k] \cap \sidesk[m])$, and  $\sidesk[k]^+ =
\sidesk[m] \cap \sidesk[k]$ for sufficiently large  $m>k$. 
Therefore,  $\int_{\sidesk[m] \setminus \sidesk[m]^+}
\hG[m]^{-1}\jumpn{}{v}^2 \ds \to 0$ as  $m \to \infty$ and thus
\begin{align*}
  \enorm[k]{v}^2&=\int_\Omega \abs{\DG^2 v}^2 \dx +\sigma \int_{\sidesk^+}
                  \hG[k]^{-1} \jumpn{}{v}^2 \ds + \sigma  \int_{\sidesk \setminus\sidesk^+}
                  \hG[k]^{-1} \jumpn{}{v}^2 \ds \\
                & \to \enorm[\infty]{v}+0\qquad\text{as  $k \to \infty$.}
\end{align*}

The second claim is a localised version and follows by analogous arguments.
\end{proof}

The next lemma is crucial for
the existence of a generalised Galerkin solution in $\V_\infty$, its
proof is postponed to Section \ref{sec:limit_space_is_Hilbert}.
\begin{lem}\label{lem:limit_space_is_Hilbert}
  The space $(\V_\infty, \scp[\infty]{\cdot}{\cdot})$ is a Hilbert space.
\end{lem}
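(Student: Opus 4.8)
The plan is to verify that $(\V_\infty,\scp[\infty]{\cdot}{\cdot})$ is complete, since it is clearly a pre-Hilbert space once we check that $\enorm[\infty]{\cdot}$ is a genuine norm on $\V_\infty$. Positive-definiteness is the first point: if $\enorm[\infty]{v}=0$ for $v\in\V_\infty$, then $\DG^2 v=0$ on $\Omega^-$ and on each $\elm\in\grid^+$, and all jumps $\jumpn{}{v}$ across faces of $\sides^+$ vanish, whence $\nabla v\in H^1(\Omega)$ is piecewise affine and globally continuous, so $v\in H^2_0(\Omega)$ is globally a polynomial of degree $\le 2$ with $v=\partial_{\normal_\Omega}v=0$ on $\partial\Omega$; a short argument on the polygonal domain forces $v\equiv 0$. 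Continuity of the embedding $\V_\infty\hookrightarrow H^2_0(\Omega^-)\times\prod_{\elm\in\grid^+}\P_r(\elm)$ together with Proposition~\ref{prop:total_variation_of_gradient_bounded_by_energy}-type control of $\nabla v$ in $BV(\Omega)^2$ by $\enorm[\infty]{v}$ will be used below.

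The main work is completeness. Let $\{v^{(n)}\}_n\subset\V_\infty$ be Cauchy for $\enorm[\infty]{\cdot}$. First I would show it is Cauchy, hence convergent, in $H^1_0(\Omega)$: by the broken Poincaré–Friedrichs inequality (Proposition~\ref{prop:Friedrich_type_estimate_global}, applied on the fixed partition $\grid^+$ restricted to $\Omega^+$, and the genuine $H^2$–control on $\Omega^-$) one bounds $\snorm[H^1_0(\Omega)]{v}\Cleq\enorm[\infty]{v}$ for $v\in\V_\infty$. Let $v$ be the $H^1_0(\Omega)$-limit. One then checks $v$ lies in the ``data part'' of $\V_\infty$: on each $\elm\in\grid^+$ the functions $v^{(n)}|_\elm$ are Cauchy in the finite-dimensional space $\P_r(\elm)$, so $v|_\elm\in\P_r(\elm)$; on $\Omega^-$ the $v^{(n)}|_{\Omega^-}$ are Cauchy in $H^2_{\partial\Omega\cap\partial\Omega^-}(\Omega^-)$ (the Hessians converge in $L^2$ and the $H^1_0$-part converges), so $v|_{\Omega^-}\in H^2_{\partial\Omega\cap\partial\Omega^-}(\Omega^-)$; and $\nabla v\in BV(\Omega)^2$ because the $\nabla v^{(n)}$ are bounded in $BV$ and converge in $L^1$, so lower semicontinuity of total variation applies. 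Along the way $\DG^2 v^{(n)}\to\DG^2 v$ in $L^2(\Omega)^{2\times 2}$ and $\jumpn{}{v^{(n)}}\to\jumpn{}{v}$ in $L^2(\sides^+)$ (the face traces are controlled by the $BV$-bound and the local $H^2$-pieces), so $\enorm[\infty]{v^{(n)}-v}\to 0$.

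The remaining and delicate point — and the step I expect to be the main obstacle — is that the limit $v$ must actually be \emph{admissible} for $\V_\infty$, i.e. that there exists a sequence $w_k\in\V_k$ with $\enorm[k]{v-w_k}\to 0$ and $\limsup_k\enorm[k]{w_k}<\infty$. This is where the absence of a conforming subspace bites, exactly as flagged in the introduction, and a plain diagonal argument over the approximating sequences of the $v^{(n)}$ is not obviously enough because the $\enorm[k]{\cdot}$-norms involve negative powers of $\hG[k]$ on the shrinking region $\Omega^{-}_k$. I would handle this by a careful diagonalisation combined with Proposition~\ref{prop:boundedness_of_energy_norm}: for each $n$ pick $k_n$ and $w^{(n)}_{k_n}\in\V_{k_n}$ with $\enorm[k_n]{v^{(n)}-w^{(n)}_{k_n}}\le 1/n$ and $\enorm[k_n]{w^{(n)}_{k_n}}\le C$ uniformly (using that $\{\enorm[k]{v^{(n)}}\}_k\nearrow\enorm[\infty]{v^{(n)}}$ is bounded uniformly in $n$ by the Cauchy property), extract $k_n\nearrow\infty$, and then estimate $\enorm[k_n]{v-w^{(n)}_{k_n}}\le\enorm[k_n]{v-v^{(n)}}+\enorm[k_n]{v^{(n)}-w^{(n)}_{k_n}}$; the first term needs $\enorm[k_n]{v-v^{(n)}}\to 0$, which follows from $\enorm[k_n]{v-v^{(n)}}\le\enorm[\infty]{v-v^{(n)}}$ (monotone limit, Proposition~\ref{prop:boundedness_of_energy_norm}) once we know $v-v^{(n)}\in\V_\infty$ — but that membership is part of what we are proving, so the right order is to first establish $\V_\infty$ is closed under the pre-Hilbert operations and $v^{(n)}\to v$ in $\enorm[\infty]{\cdot}$ \emph{assuming} $v\in\V_\infty$, and separately produce the admissible sequence for $v$ by the diagonalisation above applied to the doubly-indexed family $\{w^{(n)}_k\}$. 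Filling this circular-looking step with a genuine diagonal extraction — choosing the indices $k_n$ fast enough that the tails of all the relevant sums are small — is the technical heart of the argument, and I would model it closely on \cite[Proposition 12]{KreuzerGeorgoulis:17} while inserting the new $BV$- and broken-$H^2$-estimates needed here.
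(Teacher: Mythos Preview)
Your overall strategy is sound and the diagonal argument for producing an admissible sequence can indeed be made to work with careful bookkeeping (once the ``data part'' of $v$ is established, the jumps of $v-v^{(n)}$ are supported only on $\sides^+$, so $\enorm[k]{v-v^{(n)}}\le\enorm[\infty]{v-v^{(n)}}$ holds without circularity, and a standard doubly-indexed diagonal extraction over $\{w^{(n)}_k\}$ then yields a full sequence $v_k\in\V_k$). So the step you flag as the main obstacle is in fact manageable.

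There is, however, a genuine gap earlier, in your verification that $v|_{\Omega^-}\in H^2_{\partial\Omega\cap\partial\Omega^-}(\Omega^-)$. You argue that $v^{(n)}|_{\Omega^-}$ is Cauchy in $H^2(\Omega^-)$ and that each $v^{(n)}|_{\Omega^-}$ lies in the restriction space, hence so does the limit. But $H^2_{\partial\Omega\cap\partial\Omega^-}(\Omega^-)$ is defined as the \emph{image} of the restriction map $H^2_0(\Omega)\to H^2(\Omega^-)$, and there is no reason for this image to be closed in $H^2(\Omega^-)$ --- the set $\Omega^-$ can be highly irregular, so no extension theorem is available. Knowing that the $v^{(n)}|_{\Omega^-}$ converge in $H^2(\Omega^-)$ does not produce a global $H^2_0(\Omega)$ extension of the limit.

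This is exactly why the paper takes a different route: it builds a global function $w\in H^2_0(\Omega)$ by applying the HCT-based smoothing operator $\Cipolk[m_\ell]$ to discrete approximants $v^\ell_{m_\ell}=\Gipolk[m_\ell]v^\ell\in\V_{m_\ell}$ and passing to a weak $H^2_0(\Omega)$-limit; then $v|_{\Omega^-}=w|_{\Omega^-}$ is automatically a restriction. The same $w$ is then reused to define the admissible sequence explicitly as $v_k:=\Gipolk w$, where the key fact is the left-inverse property $\Gipolk\circ\Cipolk=\id$ on $\gridk^{2+}$ (Lemma~\ref{lem:properties_interpolation_operator}\ref{it:interpol_is_left_inverse_of_enriching}), which forces $v_k=v$ elementwise on $\gridk^{2+}$ and reduces the analysis to the shrinking region $\gridk^{2-}$. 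So the paper's smoothing/interpolation machinery does double duty --- it closes the $H^2$-restriction gap \emph{and} supplies a constructive admissible sequence --- whereas your diagonal argument, even if completed, still needs an independent fix for the restriction-space membership.
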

In order to extend the discrete problem~\eqref{eq:C0IP_Problem} to the
space $\V_\infty$, we have to extend the bilinear form $\bi$ to the space
$\V_\infty$. To this end, we define suitable liftings for the limit
space. Thanks to Lemma~\ref{lem:neighbourhood_G+}, for each $\side \in
\sides^+$, there exists $L=L(\side)$ such 
that $\side \in \sidesk[\ell]^{1+}$ for all $\ell\ge L$. We define the local lifting
operators 
\begin{align}
  \label{eq:df_local_lifting_infty}
  \liftG[\infty]^\side\definedas\liftG[L]^\side= \liftG[\grid_L]^\side.
\end{align}
From the definition of the discrete local liftings
\eqref{df:local_liftings}, we see that $\liftG[\infty]^\side$ vanishes
outside the two neighbouring element $\elm',\elm$, with $\side =\elm
\cap \elm'$. Consequently, we have $\liftG[\ell]^\side= \liftG[L]^\side
$ for all $\ell \geq L$, and therefore this definition is unique. The global lifting operator is
defined by
\begin{align}
  \label{eq:df_global_liftings_infty}
  \liftG[\infty]=\sum_{\side \in \sides^+} \liftG[\infty]^\side.
\end{align}
From estimate~\eqref{est:global_liftings} we have that $\sum_{\side \in
  \sides^+}\liftG[\infty]^\side (\jumpn{}{ v})$ is a
Cauchy sequence in $L^2(\Omega)^{2 \times 2}$. Therefore,
$\liftG[\infty](\jumpn{}{ v})\in L^2(\Omega)^{2 \times 2}$  and the estimate
\begin{align}\label{est:global_lifting_infty}
  \norm[\Omega]{\liftG[\infty](\jump{ \pn{v}})}\Cleq
  \norm[\Gamma^+]{h_+^{-1/2} \jumpn{}{ v}}
\end{align}
holds. Here we used the notation
$\Gamma^+:=\bigcup \set{\side \mid \side \in \sides^+}$. Now we are
in  position to generalise the DG-bilinear form to $\V_\infty$ setting
\begin{align*}
  \begin{split}
  \bilin[\infty]{v}{w}:&= \int_{\Omega} \DG^2v \colon \DG^2 w \dx -
  \int_\Omega\liftG[\infty](\jump{\pn{w}}) \colon \DG^2v
                         +\liftG[\infty](\jump{\pn{v}}) \colon \DG^2w \dx \\
&+\int_{\sides^+} \frac{\sigma}{\hG} \jumpn{}{v} \jumpn{}{w} \ds,
  \end{split}
\end{align*}
for all $v,w \in \V_\infty$.
\begin{cor}\label{cor:unique_solution_limit}
  There exists a unique $u_\infty\in \V_\infty$, such that 
  \begin{align}
    \label{eq:C0IP_limit}
    \bilin[\infty]{u_\infty}{v}=\int_\Omega fv \dx \quad \forall v \in \V_\infty.
  \end{align}
\end{cor}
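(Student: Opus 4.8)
The plan is to apply the Lax--Milgram theorem on the Hilbert space $(\V_\infty,\scp[\infty]{\cdot}{\cdot})$, whose completeness is guaranteed by Lemma~\ref{lem:limit_space_is_Hilbert}. Thus the three things to check are: (i) the right-hand side $v\mapsto\int_\Omega fv\dx$ is a bounded linear functional on $\V_\infty$; (ii) $\bilin[\infty]{\cdot}{\cdot}$ is bounded on $\V_\infty\times\V_\infty$ with respect to $\enorm[\infty]{\cdot}$; and (iii) $\bilin[\infty]{\cdot}{\cdot}$ is coercive on $\V_\infty$.

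For (i), note that for $v\in\V_\infty$ we have $v\in H^1_0(\Omega)$, so by the Poincar\'e--Friedrichs inequality and Proposition~\ref{prop:Friedrich_type_estimate_global} applied along an approximating sequence $v_k\in\V_k$ (together with the energy-norm convergence $\enorm[k]{v-v_k}\to0$ and Proposition~\ref{prop:boundedness_of_energy_norm}), one gets $\snorm[H^1_0(\Omega)]{v}\Cleq\enorm[\infty]{v}$; hence $\abs{\int_\Omega fv\dx}\le\norm[\Omega]{f}\norm[\Omega]{v}\Cleq\norm[\Omega]{f}\enorm[\infty]{v}$. For (ii), I would estimate the three groups of terms in $\bilin[\infty]{v}{w}$ separately: the volume term $\int_\Omega\DG^2v\colon\DG^2w\dx$ is bounded by $\norm[\Omega]{\DG^2v}\norm[\Omega]{\DG^2w}\le\enorm[\infty]{v}\enorm[\infty]{w}$; the penalty term $\sigma\int_{\sides^+}h_+^{-1}\jumpn{}{v}\jumpn{}{w}\ds$ is bounded by $\enorm[\infty]{v}\enorm[\infty]{w}$ directly from the definition of $\enorm[\infty]{\cdot}$ and Cauchy--Schwarz; and the two lifting terms are controlled using the stability estimate~\eqref{est:global_lifting_infty}, which yields $\norm[\Omega]{\liftG[\infty](\jump{\pn{w}})}\Cleq\norm[\Gamma^+]{h_+^{-1/2}\jumpn{}{w}}\le\sigma^{-1/2}\enorm[\infty]{w}$, so that $\abs{\int_\Omega\liftG[\infty](\jump{\pn{w}})\colon\DG^2v\dx}\Cleq\enorm[\infty]{v}\enorm[\infty]{w}$.

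For (iii), the natural argument mirrors the proof of Proposition~\ref{prop:coerc_and_cont_discrete}: one absorbs the lifting cross-terms into the volume and penalty terms using~\eqref{est:global_lifting_infty} and a weighted Young's inequality, which gives $\bilin[\infty]{v}{v}\ge\tfrac12\norm[\Omega]{\DG^2v}^2+(\sigma-C)\int_{\sides^+}h_+^{-1}\jumpn{}{v}^2\ds\Cgeq\enorm[\infty]{v}^2$ for $\sigma$ large enough; alternatively, and more cleanly, I would derive it as a limit of the discrete coercivity: for $v\in\V_\infty$ the restriction of $\bi$ to $\grid^+$ is exactly $\bilin[\infty]{\cdot}{\cdot}$, and since the constant $C_{\textsf{coer}}$ in Proposition~\ref{prop:coerc_and_cont_discrete} depends only on shape regularity and $r$ (uniform over $\grids$), passing through the meshes $\grid_\ell$ with $\side\in\sides^+$ eventually belonging to $\grid_\ell^{1+}$ gives the bound with the same constant. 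The threshold $\sigma_\star$ is the same as in Proposition~\ref{prop:coerc_and_cont_discrete}, so coercivity of $\bilin[\infty]{\cdot}{\cdot}$ holds for every $\sigma$ for which the discrete method is coercive. Lax--Milgram then delivers the unique $u_\infty\in\V_\infty$ solving~\eqref{eq:C0IP_limit}.

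The only genuinely delicate point is ensuring that the constants in (ii) and (iii)—in particular the coercivity threshold—match those of the discrete problem uniformly over the mesh family; this is where one must be careful to invoke the mesh-independence asserted in Proposition~\ref{prop:coerc_and_cont_discrete} and the monotone convergence of the boundary integrals from Proposition~\ref{prop:boundedness_of_energy_norm} rather than re-deriving trace and inverse estimates on the limit geometry, where $\grid^+$ need not be a conforming triangulation of $\Omega^+$. Everything else is a routine application of Lax--Milgram once Lemma~\ref{lem:limit_space_is_Hilbert} is in hand.
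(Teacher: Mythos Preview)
Your proposal is correct and follows essentially the same approach as the paper: invoke Lemma~\ref{lem:limit_space_is_Hilbert} for completeness of $\V_\infty$, establish continuity and coercivity of $\bilin[\infty]{\cdot}{\cdot}$ via the lifting stability~\eqref{est:global_lifting_infty} (and the argument pattern of Proposition~\ref{prop:coerc_and_cont_discrete}), then apply Lax--Milgram. The paper's own proof is terser---it cites the Riesz representation theorem (the bilinear form being symmetric) and omits the explicit check that the right-hand side is bounded---but your more detailed verification, including the care you take over uniformity of the coercivity threshold, is entirely in the same spirit.
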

\begin{proof}
 From Lemma~\ref{lem:limit_space_is_Hilbert} we have that $\V_\infty$ is a Hilbert space. Moreover, stability of the lifting
 operators~\eqref{est:global_lifting_infty}  and local scaled trace
 inequalities prove coercivity and continuity
 of $\bilin[\infty]{\cdot}{\cdot}$
 with respect to
 $\enorm[\infty]{\cdot}$; compare also with
 Proposition~\ref{prop:coerc_and_cont_discrete}. The assertion follows
 from the Riesz 
 representation theorem. 
\end{proof}
The following Theorem states that the solution
of~\eqref{eq:C0IP_limit} is indeed the limit of the adaptive sequence
produced by the \ACIPGM. Its proof is postponed to Section \ref{sec:proof_u_k_to_u_infty}.
\begin{thm}
\label{thm:u_k_to_u_infty}
  Let $u_\infty$ the solution of~\eqref{eq:C0IP_limit} and let
  $\{u_k\}_{k\in\N_0}$ be the sequence of \CIPG solutions produced by \ACIPGM. Then,
  \begin{align*}
    \enorm[k]{u_\infty -u_k} \to 0 \quad \text{as }k \to \infty.
  \end{align*}
\end{thm}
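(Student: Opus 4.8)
The plan is to follow the Galerkin-type argument of \cite[Theorem 14]{KreuzerGeorgoulis:17}, adapted to the limit space $\V_\infty$. First I would test the discrete equation~\eqref{eq:C0IP_Problem} and the limit equation~\eqref{eq:C0IP_limit} against suitable functions. Since each $\V_k$ is a closed subspace of the normed space $(\V_k,\enorm[k]{\cdot})$ and, by \eqref{est:uniform_stab_solution}, $\enorm[k]{u_k}\Cleq\norm[\Omega]{f}$ uniformly in $k$, the sequence $\{u_k\}$ is bounded. The goal is to identify its limit as $u_\infty$. The key technical point is that every $v\in\V_\infty$ comes, by definition, with an approximating sequence $v_k\in\V_k$ with $\enorm[k]{v-v_k}\to0$ and $\limsup_k\enorm[k]{v_k}<\infty$; conversely, one needs that $u_k$ itself is (up to a subsequence) $\enorm[k]{\cdot}$-close to some element of $\V_\infty$.

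The main steps I would carry out are as follows. \emph{Step 1: A candidate limit.} Using the uniform bound on $\enorm[k]{u_k}$ together with the broken Poincar\'e--Friedrichs inequality (Proposition~\ref{prop:Friedrich_type_estimate_global}) and Proposition~\ref{prop:total_variation_of_gradient_bounded_by_energy}, extract a (not relabelled) subsequence such that $u_k\wconv u$ in $H^1_0(\Omega)$, $\nabla u_k\arrowweakc\nabla u$ in $BV(\Omega)^2$, and $\DG^2 u_k\wconv G$ weakly in $L^2(\Omega)^{2\times 2}$ for some $G$; one then checks, using the structure of the distributional Hessian displayed before Proposition~\ref{prop:boundedness_of_energy_norm} and the jump identities, that $u\in\V_\infty$ with $\DG^2 u=G$. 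Here one must use Lemma~\ref{lem:Omega_star_and_h_vanishing} to control the contribution of $\Omega_k^{j-}$ and conclude $u|_{\Omega^-}\in H^2_{\partial\Omega\cap\partial\Omega^-}(\Omega^-)$ and $u|_\elm\in\P_r(\elm)$ for $\elm\in\grid^+$. \emph{Step 2: $u$ solves~\eqref{eq:C0IP_limit}.} Fix $v\in\V_\infty$ with approximating sequence $v_k\in\V_k$. Test~\eqref{eq:C0IP_Problem} for $\grid_k$ with $v_k$, so $\bilin[k]{u_k}{v_k}=\int_\Omega f v_k\dx$. Pass to the limit: the right-hand side converges to $\int_\Omega fv\dx$ since $v_k\to v$ in $H^1_0(\Omega)$. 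For the left-hand side one splits $\bilin[k]{u_k}{v_k}$ into the volume term $\int_\Omega\DG^2 u_k\colon\DG^2 v_k\dx$, the two lifting terms, and the penalty term, and shows each converges to the corresponding term of $\bilin[\infty]{u}{v}$. The volume term needs weak--strong convergence ($\DG^2 u_k\wconv\DG^2 u$, $\DG^2 v_k\to\DG^2 v$ strongly, the latter from $\enorm[k]{v-v_k}\to0$). The penalty and lifting terms over $\sides^+$ converge by the monotone convergence in Proposition~\ref{prop:boundedness_of_energy_norm} and stability~\eqref{est:global_lifting_infty}, while their contributions over $\sides_k\setminus\sides_k^+$ vanish in the limit because $\int_{\sides_k\setminus\sides_k^+}\hG[k]^{-1}\jumpn{}{v}^2\ds\to0$ (shown in the proof of Proposition~\ref{prop:boundedness_of_energy_norm}) and $\enorm[k]{u_k}$ stays bounded; here the definition~\eqref{eq:df_local_lifting_infty} of $\liftG[\infty]^\side$ as the eventually-stabilised discrete lifting is what makes the lifting terms match up. By uniqueness (Corollary~\ref{cor:unique_solution_limit}) we get $u=u_\infty$, and since the limit is independent of the subsequence, the whole sequence converges weakly.

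\emph{Step 3: Upgrade to norm convergence.} This is the delicate part. Having $u_k\wconv u_\infty$ and $\int_\Omega fv_k\dx\to\int_\Omega fu_\infty\dx$, test~\eqref{eq:C0IP_Problem} with $u_k$ to get $\bilin[k]{u_k}{u_k}=\int_\Omega f u_k\dx\to\int_\Omega f u_\infty\dx=\bilin[\infty]{u_\infty}{u_\infty}$. Using coercivity (Proposition~\ref{prop:coerc_and_cont_discrete}) one controls $\enorm[k]{u_k}^2$ by $\bilin[k]{u_k}{u_k}$ plus the lifting cross-terms; weak convergence of $\DG^2 u_k$ and boundedness of the penalty handle the cross-terms so that $\limsup_k\enorm[k]{u_k}^2\le\enorm[\infty]{u_\infty}^2$, while weak lower semicontinuity of $\DG^2\mapsto\norm[\Omega]{\cdot}$ together with the monotone penalty convergence gives $\enorm[\infty]{u_\infty}^2\le\liminf_k\enorm[k]{u_k}^2$. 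Hence $\enorm[k]{u_k}\to\enorm[\infty]{u_\infty}$, and combined with weak convergence in the Hilbert-space-like structure this yields $\enorm[k]{u_\infty-u_k}\to0$; one has to be slightly careful because $\enorm[k]{\cdot}$ changes with $k$, so the cleanest route is the expansion $\enorm[k]{u_\infty-u_k}^2=\enorm[k]{u_\infty}^2-2\bilin[k]{u_\infty}{u_k}_{\mathrm{sym}}+\enorm[k]{u_k}^2$ (or directly via $\bilin[k]{u_k-u_\infty}{u_k-u_\infty}$ and coercivity), using $\enorm[k]{u_\infty}\nearrow\enorm[\infty]{u_\infty}$ from Proposition~\ref{prop:boundedness_of_energy_norm} and the cross-term convergence $\bilin[k]{u_k}{u_\infty}\to\bilin[\infty]{u_\infty}{u_\infty}$, the latter again obtained by testing with the fixed element $u_\infty\in\V_\infty$ and its approximants. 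The main obstacle I anticipate is precisely this passage from weak to strong convergence while the ambient norm and the skeleton over which the penalty lives both vary with $k$; the resolution rests entirely on the monotone convergence $\enorm[k]{v}\nearrow\enorm[\infty]{v}$ of Proposition~\ref{prop:boundedness_of_energy_norm} and on the vanishing of all skeleton integrals over $\sides_k\setminus\sides_k^+$, which decouples the ``moving'' part of the norm from the limit.
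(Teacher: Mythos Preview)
Your overall three-step architecture matches the paper (candidate limit $\to$ it solves~\eqref{eq:C0IP_limit} $\to$ strong convergence), but two of the steps contain genuine gaps.

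\textbf{Step 1 is not complete.} Membership of the weak limit $u$ in $\V_\infty$ is not a matter of ``checking'' regularity on $\Omega^-$ and $\Omega^+$ separately. By definition, $u\in\V_\infty$ requires exhibiting a sequence $v_k\in\V_k$ with $\enorm[k]{u-v_k}\to0$. The paper produces this sequence via a two-stage construction specific to the $C^0$-IP setting: first the smoothing operator $\Cipolk[k]:\V_k\to H_0^2(\Omega)$ (HCT averaging), whose images have a weak $H^2_0$-limit $w$ with $w|_{\Omega^-}=u|_{\Omega^-}$; then the Cl\'ement-type interpolation $\Gipolk w\in\V_k$, built so that $\Gipolk\circ\Cipolk=\id$ on $\V_k$, which forces $\Gipolk w=u$ on $\grid_k^{1+}$ and hence $\enorm[k]{\Gipolk w-u}\to0$. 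Without these operators you cannot conclude $u\in\V_\infty$; this is precisely the step where the absence of a conforming subspace bites.

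\textbf{Step 2 misidentifies the weak limits.} You assert $\DG^2 u_k\wconv \DG^2 u$ in $L^2(\Omega)^{2\times2}$, i.e.\ $G=\DG^2 u$. This is not established and is in general false on $\Omega^-$ at this stage of the argument: the distributional Hessian of $u_k$ carries the jump contribution $\int_{\sides_k}\jump{\pn u_k}\,\vec\varphi\normal\cdot\normal$, and the jumps of the \emph{discrete solution} over the refined region are only known to be bounded, not vanishing (their vanishing is a consequence of the marking strategy and is proved only \emph{after} Theorem~\ref{thm:u_k_to_u_infty}). What the paper shows instead (Lemma~\ref{lem:weak_convergence_Omega^-}) is that if $\vec{T_r}$ and $\vec{T_s}$ denote the weak $L^2$-limits of $\DG^2 u_{k}$ and of $\liftG[k](\jump{\pn u_k})$ respectively, then $(\vec{T_r}-\vec{T_s})|_{\Omega^-}=D^2 u|_{\Omega^-}$. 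Consequently the volume term and the $u$-lifting term in $\bilin[k]{u_k}{v}$ must be treated \emph{together}; separately they do not converge to the ``right'' pieces of $\bilin[\infty]{u}{v}$. Your claim that the lifting contribution over $\sides_k\setminus\sides_k^+$ vanishes because $\int_{\sides_k\setminus\sides_k^+}h_k^{-1}\jumpn{}{v}^2\to0$ confuses the roles of $u_k$ and $v$: the lifting in that term involves $\jump{\pn u_k}$, not $\jump{\pn v}$. This same misidentification undermines your ``weak lower semicontinuity'' route in Step~3, since $\liminf_k\norm[\Omega]{\DG^2 u_k}^2\ge\norm[\Omega]{\vec{T_r}}^2$, not $\norm[\Omega]{\DG^2 u_\infty}^2$.

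For Step~3 the paper avoids all of this by inserting the discrete interpolant $\ipolk u_\infty\in\V_k$: then $\bilin[k]{u_k}{\ipolk u_\infty}=\langle f,\ipolk u_\infty\rangle$ directly from~\eqref{eq:C0IP_Problem}, and the expansion of $\bilin[k]{\ipolk u_\infty-u_k}{\ipolk u_\infty-u_k}$ reduces to showing $\bilin[k]{\ipolk u_\infty}{\ipolk u_\infty}\to\bilin[\infty]{u_\infty}{u_\infty}$ and $\langle f,u_k\rangle\to\langle f,u_\infty\rangle$, both of which follow from Corollary~\ref{cor:convergence_interpolation} and the weak $H^1_0$-convergence of the full sequence (Lemma~\ref{lem:ovu=u_infty}).
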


\subsection{Proof of Theorem~\ref{thm:main}}
In this section the marking strategy~\eqref{df:marked_elements}
becomes important. In particular, it essentially forces the maximal indicator to
vanish, which allows to control the error on the sequence
$\{\gridk^+\}_{k \in \N_0}$. Moreover, this has implications on the regularity
of the Galerkin solution $u_\infty\in\V_\infty$ from
Corollary~\ref{cor:unique_solution_limit}, which finally allow us to
prove that $u=u_\infty$. Thanks to the lower bound, we can thus
conclude the proof of  Theorem~\ref{thm:main} from
Theorem~\ref{thm:u_k_to_u_infty} employing the lower 
bound in Proposition~\ref{prop:lower_bounds_u}.

We start with proving that the maximal indicator vanishes.

\begin{lem}\label{lem:etamax->0}
  We have that
  \begin{align*}
    \max_{\elm\in\gridk}\eta_{k}(u_k,\elm)\to 0\quad\text{as}~k\to\infty
  \end{align*}
\end{lem}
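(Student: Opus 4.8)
The plan is to split the maximal indicator according to whether the maximising element eventually lies in $\grid^+$ or not. Recall from~\eqref{eq:element_estimator} that
\[
  \eta_k(u_k,\elm)^2 = \int_\elm \hG[k]^4\abs{f}^2\dx
  + \int_{\partial\elm\cap\Omega}\hG[k]\jumpn{2}{u_k}^2\ds
  + \sigma^2\int_{\partial\elm}\hG[k]^{-1}\jumpn{}{u_k}^2\ds.
\]
For a fixed $\ell\in\N_0$ I would write $\gridk = \gridk^{1+}\cup\gridk^{1-}$ and treat the two families of indicators separately, using the marking property~\eqref{df:marked_elements} to pass information from marked elements (which are refined, hence eventually in $\gridk^{1-}$) to the rest.

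\textbf{Step 1: elements in $\gridk^{1-}$.} On this set the mesh-size tends to zero uniformly by Lemma~\ref{lem:Omega_star_and_h_vanishing}, i.e. $\norm[L^\infty(\Omega)]{\hG[k]\chi_{\Omega_k^{1-}}}\to 0$. Using the uniform stability bound~\eqref{est:uniform_stab_solution}, $\enorm[k]{u_k}\Cleq\norm[\Omega]{f}$, together with discrete trace/inverse inequalities to bound $\hG[k]\jumpn{2}{u_k}^2$ and $\hG[k]^{-1}\jumpn{}{u_k}^2$ on a single element in terms of the local energy contribution, one gets
\[
  \max_{\elm\in\gridk^{1-}}\eta_k(u_k,\elm)^2
  \Cleq \norm[L^\infty(\Omega)]{\hG[k]\chi_{\Omega_k^{1-}}}^{\alpha}\,\bigl(\norm[\Omega]{f}^2 + \norm[\Omega]{f}^2\bigr)
\]
for a suitable positive power $\alpha$ (the $\hG[k]^4\abs f^2$ term is trivially controlled this way; for the jump terms one factors out one power of $\hG[k]$ from the neighbourhood and bounds the rest by $\enorm[k]{u_k}^2$). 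Hence this part vanishes as $k\to\infty$.

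\textbf{Step 2: elements in $\gridk^{1+}$ via marking.} For $\elm\in\gridk^{1+}\subset\gridk^+$ the element is never refined again, so by~\eqref{df:refined_elements} it is never marked after some stage, i.e. $\elm\in\gridk\setminus\markedk[k]$ for all large $k$ — more precisely, for fixed $\ell$ and $k\ge\ell$ the elements of $\grid_\ell^{1+}$ are unmarked. The marking assumption~\eqref{df:marked_elements} then gives $\eta_k(u_k,\elm)\le g(\eta_k(\markedk[k]))$. Since $\markedk[k]\subset\gridk^{1-}$ for $k$ large (marked elements get refined, hence leave $\grid^+$), Step 1 applied to the whole marked set — or rather $\eta_k(\markedk[k])\le \eta_k(u_k,\gridk^{1-})$, and $\eta_k(u_k,\gridk^{1-})\to 0$ by the same $L^\infty$ mesh-size bound combined with $\enorm[k]{u_k}\Cleq\norm[\Omega]{f}$ and $\abs{\Omega_k^{1-}\setminus\Omega^-}\to 0$ — shows $\eta_k(\markedk[k])\to 0$. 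By continuity of $g$ at $0$ with $g(0)=0$ we conclude $\max_{\elm\in\gridk^{1+}}\eta_k(u_k,\elm)\to 0$. Combining the two steps yields $\max_{\elm\in\gridk}\eta_k(u_k,\elm)\to0$.

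\textbf{Main obstacle.} The delicate point is Step 2: one must argue that the marked set is eventually contained in the ``shrinking'' region $\gridk^{1-}$ so that $\eta_k(\markedk[k])\to0$, and simultaneously handle the fact that an individual unmarked element of $\gridk^{1+}$ might still sit adjacent to a refined neighbour, so that the jump term $\hG[k]^{-1}\jumpn{}{u_k}^2$ on one of its faces could a priori blow up. This is exactly why the layered sets $\gridk^{j\pm}$ were introduced: passing to $\gridk^{2+}$ one ensures the whole neighbourhood (and hence every face of $\elm$) is frozen, so the face contributions are controlled by the energy on a fixed patch; meanwhile Lemma~\ref{lem:Omega_star_and_h_vanishing} guarantees $\abs{\Omega_k^{2-}\setminus\Omega^-}\to0$ so that Step 1 still applies to the slightly enlarged bad region. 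I would run the above argument with $\gridk^{2+}$/$\gridk^{2-}$ in place of $\gridk^{1+}$/$\gridk^{1-}$ to make the face terms rigorous.
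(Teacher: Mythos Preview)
Your Step~1 contains a genuine gap. You claim that for $\elm\in\gridk^{1-}$,
\[
  \eta_k(u_k,\elm)^2 \Cleq \norm[L^\infty(\Omega)]{\hG[k]\chi_{\Omega_k^{1-}}}^{\alpha}\,\norm[\Omega]{f}^2
\]
by ``factoring out one power of $\hG[k]$'' and bounding the rest by the energy. But two of the three contributions to $\eta_k(u_k,\elm)^2$ carry \emph{no} spare positive power of $\hG[k]$: the penalty term $\sigma^2\int_{\partial\elm}\hG[k]^{-1}\jumpn{}{u_k}^2\ds$ is (up to a factor $\sigma$) already a summand of $\enorm[k]{u_k}^2$, and the second-order jump term satisfies, via scaled trace and inverse estimates, $\int_{\partial\elm\cap\Omega}\hG[k]\jumpn{2}{u_k}^2\ds\Cleq \int_{\omega_k(\elm)}|\DG^2 u_k|^2\dx$, which is again just a piece of $\enorm[k]{u_k}^2$. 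From the uniform bound $\enorm[k]{u_k}\Cleq\norm[\Omega]{f}$ you therefore only get that these terms are \emph{bounded}, not that they vanish as $\hG[k]\to 0$ on $\Omega_k^{1-}$. The same objection destroys your Step~2, where you need the even stronger claim $\eta_k(u_k,\gridk^{1-})\to 0$ for the \emph{sum} over $\gridk^{1-}$.

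The missing ingredient is precisely Theorem~\ref{thm:u_k_to_u_infty}. The paper's proof inserts the limit $u_\infty$ by the triangle inequality:
\[
  \int_{\omega_k(\elm)}|\DG^2 u_k|^2\dx + \sigma^2\int_{\partial\elm}\hG[k]^{-1}\jumpn{}{u_k}^2\ds
  \Cleq \enorm[k]{u_\infty-u_k}^2
  + \int_{\omega_k(\elm)}|\DG^2 u_\infty|^2\dx
  + \int_{\sides^+\setminus\sides_k^+}\hG[+]^{-1}\jumpn{}{u_\infty}^2\ds .
\]
The first term on the right vanishes by Theorem~\ref{thm:u_k_to_u_infty}; the second vanishes because $|\DG^2 u_\infty|^2$ is a \emph{fixed} $L^1$ function and $|\omega_k(\elm)|\Cleq \hG[\elm]^2\to 0$ for $\elm\in\gridk^-$ (absolute continuity of the integral); the third vanishes by Proposition~\ref{prop:boundedness_of_energy_norm}. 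This gives $\max_{\elm\in\gridk^-}\eta_k(u_k,\elm)\to 0$, and then the marking property~\eqref{df:marked_elements} handles the elements in $\gridk^+$ (which are unmarked, with $\marked_k\subset\gridk^-$). Your attempt to bypass $u_\infty$ and argue directly from $\hG[k]\to 0$ cannot work, because the discrete functions $u_k$ change with $k$ and uniform boundedness of their energy does not give smallness on shrinking sets.
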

\begin{proof}
  Let $k\in \N_0$, and $\elm\in\gridk^-$ such that
  $\eta_{k}(u_k,\elm)=\max_{\elm'\in\gridk^-}\eta_{k}(u_k,\elm')$. 
  Then we have by standard
  scaled trace- and inverse estimates that 
  \begin{align*}
    \eta_\grid(u_k,\elm)^2&= \int_\elm \hG^4 \abs{f}^2 \dx +
                            \int_{\partial \elm \cap  \Omega } \hk \jumpn{2}{u_k}^2 +
                            \sigma^2 \int_{\partial \elm} h_k^{-1} \jumpn{}{u_k}^2
    \\
                          &\Cleq \int_\elm \hG^4 \abs{f}^2 \dx + \int_{\omega_k(\elm)}|\DG^2
                            u_k|^2\dx +\sigma^2\int_{\partial \elm}
                            h_k^{-1} \jumpn{}{u_k}^2.
  \end{align*}
  The first term on the right hand side converges to zero thanks to
  Lemma~\ref{lem:Omega_star_and_h_vanishing}. For the remaining terms,
  we have from triangle inequalities that 
  \begin{multline*}
    \int_{\omega_k(\elm)}|\DG^2
                            u_k|^2\dx +\sigma^2\int_{\partial \elm}
                            h_k^{-1} \jumpn{}{u_k}^2
                            \\
                            \begin{aligned}
                              &\Cleq 
                            \enorm[k]{u_\infty-u_k}^2
                            +\int_{\omega_k(\elm)}|\DG^2
                            u_\infty|^2\dx+\sigma^2\int_{\partial \elm} h_k^{-1}
                            \jumpn{}{u_\infty}^2
    \\
                          &\le 
                            \enorm[k]{u_\infty-u_k}^2+\int_{\omega_k(\elm)}|\DG^2
                            u_\infty|^2\dx+\sigma^2\int_{\sides^+\setminus\sides_k^+} h_+^{-1}
                            \jumpn{}{u_\infty}^2
                            \end{aligned}
  \end{multline*}
  and also these terms vanish thanks
  to~Theorem~\ref{thm:u_k_to_u_infty} and
  Lemma~\ref{lem:Omega_star_and_h_vanishing} thanks to uniform
  integrability of $\DG^2 u_\infty$. Consequently
  \begin{align*}
    \max_{\elm\in\gridk^-}\eta_{k}(u_k,\elm)\to 0\qquad\text{as}~k\to\infty.
  \end{align*}
  Thanks to the marking strategy~\eqref{df:marked_elements},  this
  proves the assertion.
  \end{proof}

\begin{lem}\label{lem:est_Gk+}
  We have
 $    \est_k(\gridk^{1+}) \to 0$ as $k \to \infty$.
\end{lem}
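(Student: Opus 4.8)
The plan is to estimate $\est_k(\gridk^{1+})^2 = \sum_{\elm \in \gridk^{1+}} \est_k(u_k, \elm)^2$ by splitting the three contributions in the definition~\eqref{eq:element_estimator} of the element estimator and showing each tends to zero. First I would handle the data term: for $\elm \in \gridk^{1+}$ the scaled contribution $\int_\elm \hG[k]^4 |f|^2 \dx$ summed over all such elements is bounded by $\norm[L^\infty(\Omega)]{\hG[k]}^4 \norm[\Omega]{f}^2$, which is uniformly bounded, so this alone does not obviously vanish; instead I would compare with $u_\infty$ and exploit that $\gridk^{1+}$ consists of elements whose full neighbourhood is eventually never refined. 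The cleanest route is a triangle inequality against the limit solution $u_\infty$: by finite overlap of the neighbourhoods $\omega_k(\elm)$ and by standard scaled trace and inverse estimates on $\V_k$ (exactly as used in the proof of Lemma~\ref{lem:etamax->0}), one gets
\begin{align*}
  \est_k(\gridk^{1+})^2 \Cleq \sum_{\elm \in \gridk^{1+}} \int_\elm \hG[k]^4 |f|^2 \dx
  + \enorm[k]{u_\infty - u_k}^2
  + \int_{\Omega_k^{1+}} |\DG^2 u_\infty|^2 \dx
  + \sigma^2 \int_{\sides^+ \setminus \sidesk^+} h_+^{-1} \jumpn{}{u_\infty}^2 \ds,
\end{align*}
where in the last step I used that jumps of $u_\infty$ across interior faces of $\gridk^{1+}$ lie in $\Gamma^+$ and are controlled by the tail $\sides^+ \setminus \sidesk^+$, while the Hessian term over $\Omega_k^{1+}$ is controlled since it is part of the fixed $L^2$-function $\DG^2 u_\infty$.

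Now each of the four terms vanishes as $k \to \infty$: the middle term by Theorem~\ref{thm:u_k_to_u_infty}; the data term because on $\gridk^{1+}$ one can bound $\hG[k]^4 \le \hG[k]^2 \norm[L^\infty(\Omega)]{\hG[k] \chi_{\Omega_k^{1-}}}^2$ is not quite right — rather, since $\gridk^{1+} \subset \gridk^+$ one has $\Omega_k^{1+} \subset \Omega^+$ and the mesh-size on $\gridk^{1+}$ need not vanish, so instead I would argue by dominated convergence: $\hG[k]^4 |f|^2 \chi_{\Omega_k^{1+}} \to 0$ a.e. on $\Omega^+$ is false in general too. The correct observation is that $\gridk^{1+}$ elements are eventually \emph{never refined}, hence for $k$ large the restriction $u_k|_{\Omega_k^{1+}}$ and the mesh there stabilise; but the estimator still contains $\hG[k]^4|f|^2$ with fixed $\hG[k]$, so the data term does \emph{not} vanish pointwise. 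This forces the comparison argument above to be the essential mechanism: the point is that $\est_k(\gridk^{1+})$ is, up to oscillation, bounded by $\enorm[{\gridk^{1+}}]{u_\infty - u_k}$ plus boundary-tail terms — i.e. one should invoke the \emph{lower bound} (efficiency) Proposition~\ref{prop:lower_bounds_u} applied on the subdomain, not reliability. Thus I would instead write, using efficiency on each $\elm \in \gridk^{1+}$,
\begin{align*}
  \est_k(u_k, \elm) \Cleq \enorm[{\omega_k(\elm)}]{u - u_k} + \osc(\elm, f),
\end{align*}
sum over $\gridk^{1+}$, use finite overlap, and then bound $\enorm[k]{u - u_k} \le \enorm[k]{u - u_\infty} + \enorm[k]{u_\infty - u_k}$.

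The last piece is then to show $\enorm[k]{u - u_\infty} \to 0$ together with $\osc(\gridk^{1+}, f) \to 0$; but here a subtlety surfaces, namely that at this point in the paper one has \emph{not yet} proved $u = u_\infty$ — that identification is precisely what Lemma~\ref{lem:etamax->0} and this lemma are meant to enable. The main obstacle, therefore, is to avoid circularity: I expect the actual proof to bypass the exact solution $u$ entirely and argue directly as in the displayed four-term bound above, controlling $\int_{\Omega_k^{1+}} |\DG^2 u_\infty|^2 \dx$ and $\sigma^2 \int_{\sides^+ \setminus \sidesk^+} h_+^{-1} \jumpn{}{u_\infty}^2 \ds$ via the \emph{absolute continuity of the integral} (uniform integrability of $\DG^2 u_\infty \in L^2(\Omega)$ and of $h_+^{-1/2}\jumpn{}{u_\infty} \in L^2(\Gamma^+)$, which is finite since $u_\infty \in \V_\infty$) combined with $|\Omega^+ \setminus \Omega_k^{j+}| \to 0$ from Lemma~\ref{lem:Omega_star_and_h_vanishing}; the data term $\sum_{\elm\in\gridk^{1+}}\int_\elm\hG[k]^4|f|^2\dx$ I would split as the part over $\Omega_k^{1+}\setminus\Omega_\ell^{+}$ (small in measure, handled by absolute continuity of $\int|f|^2$ times the bounded factor $\norm[L^\infty]{\hG[k]}^4$) plus the part over a fixed $\Omega_\ell^+$ (where the mesh is fixed, so one picks $\ell$ large first using that the tail $\sum_{\elm\notin\gridl^+}\int_\elm\hG[\ell]^4|f|^2\dx$ of the convergent series is small). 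The genuinely delicate point is organising this $\epsilon$-$\ell$-$k$ argument so that the choice of $\ell$ (fixing a coarse patch) precedes the choice of $k$, mirroring the structure already used in the proof of Lemma~\ref{lem:Omega_star_and_h_vanishing}.
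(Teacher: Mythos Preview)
Your four-term bound has a genuine gap. After the triangle inequality against $u_\infty$ you obtain, summed over $\elm\in\gridk^{1+}$, a contribution $\int_{\Omega_k^{1+}}|\DG^2 u_\infty|^2\dx$ (in fact even $\int_{\Omega_k^{+}}|\DG^2 u_\infty|^2\dx$, since $\omega_k(\elm)\subset\Omega_k^+$ for $\elm\in\gridk^{1+}$). This term does \emph{not} tend to zero: $\Omega_k^{1+}\nearrow\Omega^+$, so it converges to $\int_{\Omega^+}|\DG^2 u_\infty|^2\dx$, which has no reason to vanish. Your appeal to absolute continuity together with $|\Omega^+\setminus\Omega_k^{j+}|\to 0$ controls exactly the wrong complement: it would bound $\int_{\Omega^+\setminus\Omega_k^{1+}}|\DG^2 u_\infty|^2\dx$, not $\int_{\Omega_k^{1+}}|\DG^2 u_\infty|^2\dx$. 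The same mistake occurs in the jump term: for $\elm\in\gridk^{1+}$ the faces of $\partial\elm$ lie in $\sidesk^+\subset\sides^+$, not in the tail $\sides^+\setminus\sidesk^+$, so $\int_{\sides(\gridk^{1+})}h_+^{-1}\jumpn{}{u_\infty}^2\ds$ again converges to a nonzero limit. The inverse/trace estimate you copied from the proof of Lemma~\ref{lem:etamax->0} works there only because a \emph{single} element $\elm\in\gridk^-$ is considered and $|\omega_k(\elm)|\to 0$; summing it over the persistent set $\gridk^{1+}$ destroys the mechanism.

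The paper's argument is both shorter and structurally different: it does not compare with $u_\infty$ at all on $\gridk^{1+}$. Instead it uses Lemma~\ref{lem:etamax->0}, i.e.\ $\max_{\elm\in\gridk}\eta_k(u_k,\elm)\to 0$, which you never invoke. One rewrites $\est_k(\gridk^{1+})^2$ in an integral (measure) framework over the fixed index set $\grid^+$; each summand $\eta_k(u_k,\elm)^2$ tends to zero by the maximum bound, and a generalised dominated convergence argument (as in \cite[Proposition~4.3]{MorinSiebertVeeser:08}) handles the passage to the limit of the sum. The key ingredient you are missing is precisely Lemma~\ref{lem:etamax->0}: once the maximal indicator vanishes, no comparison with $u_\infty$, no efficiency bound, and no $\epsilon$--$\ell$--$k$ layering is needed.
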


\begin{proof}
Employing Lemma~\ref{lem:etamax->0}, the claim follows 
from reformulating the estimator in an 
integral framework and a generalised Lebesgue dominated
convergence theorem; for details see \cite[Proposition 4.3]{MorinSiebertVeeser:08}.
\end{proof}

Lemma~\ref{lem:est_Gk+} yields in particular that
\begin{align}
  \label{eq:jump+->0}
  \int_{\sides_k^+} h_k^{-1} \jumpn{}{u_k}^2\to 0\quad\text{as}~k\to\infty.
\end{align}
We shall use this fact to conclude additional regularity of the limit function.

\begin{lem}
  We have for $u_\infty\in\V_\infty$ from
  Corollary~\ref{cor:unique_solution_limit} that
  $u_\infty\in H_0^2(\Omega)$.
\end{lem}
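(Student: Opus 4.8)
The statement upgrades the solution $u_\infty\in\V_\infty$ of~\eqref{eq:C0IP_limit} to $H^2_0(\Omega)$. By the very definition of $\V_\infty$ we already know $u_\infty\in H^1_0(\Omega)$, $u_\infty|_{\Omega^-}\in H^2_{\partial\Omega\cap\partial\Omega^-}(\Omega^-)$ and $u_\infty|_\elm\in\P_r(\elm)$ for $\elm\in\grid^+$; what is missing is (i) that the normal–derivative jumps $\jumpn{}{u_\infty}$ vanish on the limit skeleton $\Gamma^+$, (ii) that $u_\infty$ has $L^2$ second distributional derivatives, hence lies in $H^2(\Omega)$, and (iii) that $\partial_n u_\infty=0$ on $\partial\Omega$. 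The plan is to establish these three points in turn, the first being the heart of the matter and the only place where the estimator convergence~\eqref{eq:jump+->0} enters.

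For (i) I would fuse the two convergences at our disposal. The jump operator is linear on each side, so $\jumpn{}{u_\infty-u_k}=\jumpn{}{u_\infty}-\jumpn{}{u_k}$, and Theorem~\ref{thm:u_k_to_u_infty} gives $\int_{\sidesk}h_k^{-1}|\jumpn{}{u_\infty}-\jumpn{}{u_k}|^2\ds\le\sigma^{-1}\enorm[k]{u_\infty-u_k}^2\to0$. Restricting the integral to $\sidesk^+\subset\sidesk$ and using $a^2\le2(a-b)^2+2b^2$ together with~\eqref{eq:jump+->0}, which controls $\int_{\sidesk^+}h_k^{-1}\jumpn{}{u_k}^2\ds$, I obtain $\int_{\sidesk^+}h_k^{-1}\jumpn{}{u_\infty}^2\ds\to0$. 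Since the sets $\sidesk^+$ increase to $\sides^+$ and $h_k=h_+$ on these frozen faces, applying the monotone convergence theorem to the non-decreasing sequence $\int_{\sidesk^+}h_+^{-1}\jumpn{}{u_\infty}^2\ds$ (cf.\ also the localised statement in Proposition~\ref{prop:boundedness_of_energy_norm}) identifies the limit of the left-hand side with $\int_{\sides^+}h_+^{-1}\jumpn{}{u_\infty}^2\ds$. Hence this quantity is zero, i.e.\ $\jumpn{}{u_\infty}=0$ $\mathcal{H}^1$-almost everywhere on $\Gamma^+$.

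Point (ii) is then immediate from the integration–by–parts identity for the distributional Hessian of elements of $\V_\infty$ recorded after the definition of $\DG^2$: with $\jumpn{}{u_\infty}=0$ on $\sides^+$ it reduces to $\langle D^2u_\infty,\vec\varphi\rangle=\int_\Omega\DG^2u_\infty\colon\vec\varphi\dx$ for all $\vec\varphi\in C_0^1(\Omega)^{2\times2}$, so the second distributional derivatives of $u_\infty$ are represented by the $L^2$-tensor $\DG^2u_\infty$; together with $\nabla u_\infty\in L^2(\Omega)^2$ this yields $\nabla u_\infty\in H^1(\Omega)^2$, i.e.\ $u_\infty\in H^2(\Omega)$. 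For (iii), $u_\infty\in H^1_0(\Omega)$ already makes $u_\infty$ and its tangential derivative vanish on $\partial\Omega$, so only the normal derivative is at issue. Up to the $\mathcal{H}^1$-null set of element vertices, $\partial\Omega$ decomposes into its part in $\overline{\Omega^-}$, which lies in $\partial\Omega\cap\partial\Omega^-$ where $u_\infty|_{\Omega^-}$ being the restriction of an $H^2_0(\Omega)$-function forces the one-sided — hence, by $u_\infty\in H^2(\Omega)$, the full — normal derivative to vanish, and its part in $\overline{\Omega^+}$, which is a union of boundary faces of $\grid^+$ on each of which $\partial_n u_\infty=\jumpn{}{u_\infty}=0$ by (i). Thus $\nabla u_\infty=0$ on $\partial\Omega$, and since $\Omega$ is a bounded Lipschitz domain this is exactly membership in $H^2_0(\Omega)$.

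I expect the one genuinely delicate step to be (i): the two ingredients $\enorm[k]{u_\infty-u_k}\to0$ (Theorem~\ref{thm:u_k_to_u_infty}) and $\est_k(\gridk^{1+})\to0$ (hence~\eqref{eq:jump+->0}) are a priori unrelated, and after combining them one must pass to the limit $k\to\infty$ in the face integral \emph{into the limit space}, i.e.\ land on $\int_{\sides^+}h_+^{-1}\jumpn{}{u_\infty}^2\ds$ and not on a sequence-dependent quantity — this is where the monotonicity of $\{\sidesk^+\}_k$ and the stability $h_k=h_+$ on frozen faces are used. The geometric bookkeeping in (iii), matching the two parts of $\partial\Omega$ with $\overline{\Omega^-}$ and $\overline{\Omega^+}$, is the only other point requiring a little care.
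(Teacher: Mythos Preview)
Your proof is correct and takes a route that differs in organisation from the paper's. Both use the same two inputs---Theorem~\ref{thm:u_k_to_u_infty} and~\eqref{eq:jump+->0}---but combine them differently. The paper shows $\int_{\sidesk}h_k^{-1}\jumpn{}{u_k}^2\ds\to0$ on the \emph{full} skeleton $\sidesk$ (splitting $\sidesk=\sidesk^-\cup\sidesk^+$, using Proposition~\ref{prop:boundedness_of_energy_norm} on $\sidesk^-$ to compare with $u_\infty$, and~\eqref{eq:jump+->0} on $\sidesk^+$), then inserts this into the distributional Hessian of $u_k$ and passes to the limit; the boundary condition is handled in one line by noting that $\sidesk$ contains boundary sides. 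You instead show $\int_{\sides^+}h_+^{-1}\jumpn{}{u_\infty}^2\ds=0$ directly, working only on $\sidesk^+$ and passing to $\sides^+$ by monotonicity, then invoke the distributional identity for $u_\infty\in\V_\infty$ already recorded after~\eqref{df:D2pw}; you treat the boundary condition separately and explicitly. Your route is a little more economical in that it avoids the $\sidesk^-$ detour, and your treatment of the boundary is more transparent than the paper's terse remark.

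One small tightening for your step~(iii): rather than speaking of the ``one-sided normal derivative from $\Omega^-$'' (which would need some regularity of $\partial\Omega^-$ to interpret), it is cleaner to decompose $\partial\Omega=(\partial\Omega\cap\overline{\Omega^+})\cup(\partial\Omega\setminus\overline{\Omega^+})$. The first set is handled by~(i) exactly as you say. On the second set every point has an open neighbourhood $U$ with $U\cap\Omega\subset\Omega^-$, so $u_\infty=w$ on $U\cap\Omega$ for some $w\in H_0^2(\Omega)$; since now $u_\infty\in H^2(\Omega)$, the trace of $\nabla u_\infty=\nabla w$ on $U\cap\partial\Omega$ vanishes.
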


\begin{proof}
  From Theorem~\ref{thm:u_k_to_u_infty}, we know that $u_k\to
  u_\infty$ in $H_0^1(\Omega)$ as well as 
  \begin{align*}
    \DG^2 u_{k}\to \DG^2 u_\infty\qquad\text{in}~L^2(\Omega)^{2\times
    2}\quad\text{as}~k\to\infty. 
  \end{align*}
  We have  that
  the distributional Hessian of $u_{k}$ is given by 
  \begin{align*}
    \langle D^2 u_k, \vec\varphi \rangle&= \int_{\Omega} \DG^2 u_k
                                          \colon \vec\varphi \dx -\int_{\sides_k} \vec\varphi \normal \cdot \normal \jump{\pn
      u_k} \ds,\quad \vec\varphi\in C_0^\infty(\Omega)^{2\times 2}.
  \end{align*}
  Consequently, $u_\infty$ has second weak derivatives $\DG^2 u_\infty
  $ if and only if the latter
  term vanishes as $k\to\infty$. This follows from 
  \begin{align}\label{eq:jumps->0}
    \int_{\sides_k} h_k^{-1}\jump{\pn
      u_k}^2 \ds\to 0\quad\text{as}~k\to\infty,
  \end{align}
  which even implies $u_\infty\in H_0^2(\Omega)$ since $\sides_k$
  contains also boundary sides. In order to
  verify~\eqref{eq:jumps->0}, we estimate
  \begin{align*}
    \int_{\sides_k} h_k^{-1}\jump{\pn u_k}^2 \ds&= \int_{\sides_k^-} h_k^{-1}\jump{\pn
      u_k}^2 \ds+\int_{\sides_k^+} h_k^{-1}\jump{\pn
                                                  u_k}^2 \ds
    \\
                                                &\leq 2 \int_{\sides_k^-} h_k^{-1}\jump{\pn
                                                  u_\infty}^2 \ds
                                                  +2\enorm[k]{u_\infty-u_k}^2+
                                                  \int_{\sides_k^+} h_k^{-1}\jump{\pn
      u_k}^2 \ds 
  \end{align*}
  Thanks to Proposition~\ref{prop:boundedness_of_energy_norm},
  Theorem~\ref{thm:u_k_to_u_infty} and~\eqref{eq:jump+->0}, we have
  that all three terms tend to zero. This proves the assertion.
\end{proof}

\begin{lem}\label{lem:u=uinfty}
  Let $u\in H_0^2(\Omega)$ and $u_\infty\in \V_\infty$ the solutions
  of~\eqref{eq:Bihamonic_equation} and~\eqref{eq:C0IP_limit}
  respectively. Then $u=u_\infty$.
\end{lem}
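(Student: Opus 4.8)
The plan is to show that $u_\infty$ solves the weak biharmonic problem~\eqref{eq:Bihamonic_equation}; since that problem has a unique solution, this forces $u=u_\infty$. As $u_\infty\in H_0^2(\Omega)$ by the preceding lemma, it suffices to verify $a(u_\infty,\phi)=\int_\Omega f\phi\dx$ for every $\phi\in C_0^\infty(\Omega)$ (which is dense in $H_0^2(\Omega)$). I would establish this first on the refined region $\Omega^-$, then extract the structure of $u_\infty$ on the unrefined region $\Omega^+$, and finally rule out a spurious contribution on the skeleton $\Gamma^+$.

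On $\Omega^-$: fix $\phi\in C_0^\infty(\Omega)$ with $\supp\phi\subset\Omega^-$ and test the discrete identity $\bilin[k]{u_k}{v_k}=\int_\Omega fv_k\dx$ with the Lagrange interpolant $v_k:=\ipolk\phi\in\V(\grid_k)$. Since $\supp\phi$ is a compact subset of the open set $\Omega^-$, every element of $\grid_k$ meeting $\supp\phi$ lies in $\grid_k\setminus\grid^+$ and hence has diameter tending to zero, by Lemma~\ref{lem:Omega_star_and_h_vanishing}; consequently $\ipolk\phi\to\phi$ in $H^2(\Omega)$, while the jumps $\jumpn{}{\ipolk\phi}$, the penalty contribution and the lifting contribution involving $v_k$ vanish in the limit. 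Combining this with $\DG^2 u_k\to D^2 u_\infty$ in $L^2(\Omega)$ and $u_k\to u_\infty$ in $H_0^1(\Omega)$ (Theorem~\ref{thm:u_k_to_u_infty} together with $u_\infty\in H_0^2(\Omega)$), as well as $\liftG[k](\jump{\pn{u_k}})\to 0$ in $L^2(\Omega)$ and $\int_{\sides_k}h_k^{-1}\jumpn{}{u_k}^2\ds\to0$ from~\eqref{eq:jumps->0}, passing to the limit yields $\int_\Omega D^2 u_\infty\colon D^2\phi\dx=\int_\Omega f\phi\dx$. Hence $\Delta^2 u_\infty=f$ in the distributional sense on $\Omega^-$, and interior elliptic regularity gives $u_\infty\in H^4_{\mathrm{loc}}(\Omega^-)$.

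On $\Omega^+$: since $u_\infty|_\elm\in\P_2(\elm)$ for every $\elm\in\grid^+$, $\Delta^2 u_\infty$ vanishes on the interior of each such element. Moreover, the marking forces $f|_{\Omega^+}=0$: by Lemma~\ref{lem:est_Gk+}, $0=\lim_k\est_k(\grid_k^{1+})^2\ge\lim_k\sum_{\elm\in\grid_k^{1+}}\int_\elm h_k^4|f|^2\dx=\lim_k\int_{\Omega_k^{1+}}h_+^4|f|^2\dx=\int_{\Omega^+}h_+^4|f|^2\dx$, where we used that on $\grid_k^{1+}\subset\grid^+$ the mesh-size already equals $h_+$ and that $|\Omega^+\setminus\Omega_k^{1+}|\to0$ (Lemma~\ref{lem:Omega_star_and_h_vanishing}); since $h_+>0$ a.e.\ on $\Omega^+$, this gives $f=0$ a.e.\ there, so $\Delta^2 u_\infty=0=f$ on the element interiors as well. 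In addition, $\jumpn{}{u_\infty}=0$ on $\Gamma^+$ (this is part of $u_\infty\in H^2(\Omega)$) and $\jumpn{2}{u_\infty}=0$ on $\Gamma^+$: for $\elm\in\grid^+$ the quantity $h_\elm\int_{\partial\elm\cap\Omega}\jumpn{2}{u_k}^2\ds$ is $\Cleq\eta_k(u_k,\elm)^2\to0$ by Lemma~\ref{lem:etamax->0} with $h_\elm$ frozen, while $\DG^2 u_k\to D^2 u_\infty$ in $L^2(\Omega^+)$ on the stationary mesh yields convergence of the face traces of these piecewise-constant Hessians. Together with the continuity of $u_\infty$ and $\nabla u_\infty$ this forces all second derivatives of $u_\infty$ to match across $\Gamma^+$, so that $u_\infty$ is $C^2$ there and, being piecewise $\P_2$, a single quadratic on each connected component of $\Omega^+$.

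Finally, I would glue the two pictures: by the above $\Delta^2 u_\infty=f$ holds on $\Omega$ away from a Lebesgue-null set contained in $\Gamma^+$ together with the interface between $\Omega^+$ and $\Omega^-$, and near that set $u_\infty$ carries no jump of $D^2 u_\infty$, so $\Delta^2 u_\infty-f$ can only carry a ``$\delta$-type'' contribution concentrated on the interface. To annihilate it I would test the limit equation~\eqref{eq:C0IP_limit} with suitable $v\in\V_\infty$ — constructed, as in the $\Omega^-$ step, as limits of interpolants and carrying nontrivial $\jumpn{}{v}$ on $\Gamma^+$ — and integrate by parts elementwise on $\grid^+$, using $\Delta^2 u_\infty=0$ on the elements, the vanishing jumps just established, and $f|_{\Omega^+}=0$; this shows the remaining boundary functional vanishes, whence $a(u_\infty,\phi)=\int_\Omega f\phi\dx$ for all $\phi\in C_0^\infty(\Omega)$ and $u=u_\infty$ by uniqueness. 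I expect this last gluing step to be the main obstacle: it is exactly here that the absence of a proper $C^1$-conforming subspace is felt, since a general $\phi$ cannot be approximated within $\V_\infty$ without polluting $\Omega^+$, so the natural transmission conditions along $\partial\Omega^+$ have to be recovered from the limit problem directly rather than inherited from a conforming density argument.
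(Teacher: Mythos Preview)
Your approach differs substantially from the paper's, and the gap you anticipate in the gluing step is real and, as you have formulated it, not easily closed.

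The paper's argument is global and never decomposes the test function according to $\Omega^-$ and $\Omega^+$. Using $u_\infty\in H_0^2(\Omega)$, coercivity, and the limit equation~\eqref{eq:C0IP_limit}, one reduces $\enorm[]{u-u_\infty}^2$ to the single quantity $\langle f,u\rangle-\bilin[k]{u_k}{u}$ (plus a term controlled by Theorem~\ref{thm:u_k_to_u_infty}). This is an asymptotic consistency statement with the \emph{fixed} test function $u$. After approximating $u$ by $u_\epsilon\in H_0^3(\Omega)$, standard residual-based a posteriori arguments bound it by
\[
\|\hk\|_{L^\infty(\Omega_k^{1-})}\,\eta_k(u_k,\gridk^-)\,|u_\epsilon|_{H^3(\Omega)}
\;+\;\eta_k(u_k,\gridk^+)\,\enorm[]{u_\epsilon}\;+\;\epsilon\,\|f\|_{\Omega}.
\]
The first term vanishes because the mesh-size on $\Omega_k^{1-}$ tends to zero (Lemma~\ref{lem:Omega_star_and_h_vanishing}) while the estimator stays bounded; the second vanishes by Lemma~\ref{lem:est_Gk+}. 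No localisation, no transmission conditions, no structure of $\partial\Omega^+$ enter.

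Your route runs into a structural obstruction already before the gluing: $\Omega^-=\Omega\setminus\Omega^+$ need not be open and may have empty interior (e.g.\ $\grid^+$ may exhaust $\Omega$ up to a Lebesgue-null set). In that case there is no nontrivial $\phi\in C_0^\infty(\Omega)$ with $\supp\phi\subset\Omega^-$, your first step is vacuous, and the entire identity must come from the $\Omega^+$ analysis and the gluing. But the gluing is genuinely delicate: the interface $\partial\Omega^+\cap\Omega$ can be highly irregular (an accumulation of ever smaller triangles of $\grid^+$), no trace of $D^2u_\infty$ from the $\Omega^-$ side is available there, and~\eqref{eq:C0IP_limit} only supplies test functions that are piecewise quadratic on $\grid^+$, not arbitrary smooth $\phi$. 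Your observation that $f=0$ a.e.\ on $\Omega^+$ and $\jumpn{2}{u_\infty}=0$ across interior faces of $\grid^+$ is correct and pleasant, but it does not substitute for the missing transmission argument across $\partial\Omega^+\cap\Omega$; the sketch of testing~\eqref{eq:C0IP_limit} with functions ``carrying nontrivial $\jumpn{}{v}$ on $\Gamma^+$'' does not touch that interface. The paper's approach sidesteps all of this by never attempting to localise.
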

\begin{proof}
  We recall that for $v,w\in H_0^2(\Omega)$ we have
  $\bilin[]{v}{w}=\bilin[k]{v}{w}=\bilin[\infty]{v}{w}$. Therefore, we
  obtain from $u_\infty\in H_0^2(\Omega)$ and \eqref{eq:C0IP_limit}
  that
  \begin{align*}
    \enorm[]{u-u_\infty}^2&\Cleq \bilin[]{u-u_\infty}{u-u_\infty}\\
                          &= \bilin[]{u}{u-u_\infty}-\bilin[\infty]{u_\infty}{u}+\bilin[\infty]{u_\infty}{u_\infty}
    \\
                          &
                            =\langle f,u-u_\infty\rangle - \bilin[\infty]{u_\infty}{u}+\langle
                            f, u_\infty\rangle
    \\
                          &=\langle f,u\rangle
                            -\bilin[k]{u_\infty}{u}=\langle f,u\rangle
                            -\bilin[k]{u_k}{u}+\bilin[k]{u_\infty-u_k}{u}
    \\
                          &\le\langle f,u\rangle
                            -\bilin[k]{u_k}{u}+\enorm[]{u}\enorm[k]{u_\infty-u_k}.
  \end{align*}
  The last product vanishes thanks to
  Theorem~\ref{thm:u_k_to_u_infty} and we are left with the remaining
  parts.  By the density of $H_0^3(\Omega)$ in $H_0^2(\Omega)$ for
  $\epsilon>0$, we may choose $u_\epsilon\in H_0^3(\Omega)$ such that
  $\enorm[]{u-u_\epsilon}\le \epsilon$. Recalling that $\langle f,v_k\rangle
  -\bilin[k]{u_k}{v_k}=0$ for all
  $v_k\in\V_k$, we employ  
  standard a posteriori techniques
  \cite{BrennerGudiSung2009,GeorgoulisHoustonVirtanen:09,Verfurth:2013}
  to obtain
  \begin{multline*}
    \abs{ \langle f,u\rangle
      -\bilin[k]{u_k}{u}}\\
    \begin{aligned}
      &\leq \abs{ \langle f,u_\epsilon\rangle
      -\bilin[k]{u_k}{u_\epsilon}}+\abs{ \langle f,u-u_\epsilon \rangle
      -\bilin[k]{u_k}{u-u_\epsilon}}   \\
      &\Cleq 
      \sum_{\elm\in\gridk^-}\eta_k(u_k,\elm)\|\hk\|_{L^\infty(\Omega^{1-})}|u_\epsilon|_{H^3(\omega^2_k(\elm))}
      \\
      &\quad
      +\sum_{\elm\in\gridk^+}\eta_k(u_k,\elm)\enorm[{\neighk^2(\elm)}]{u_\epsilon}+\epsilon
      \|f\|_{L^2(\Omega)}
      \\
      &\Cleq \|\hk\|_{L^\infty(\Omega^{1-})}\eta_k(u_k,\gridk^-)
      |u_\epsilon|_{H^3(\omega^2_k(\elm))}+\eta_k(u_k,\gridk^+)
      \enorm[]{u_\epsilon}+\epsilon \|f\|_{L^2(\Omega)}.
    \end{aligned}
  \end{multline*}
  Here, we have used interpolation estimates in $H^3$ for the first
  term and stability of the interpolation for the second term as well
  as~\eqref{est:uniform_stab_solution} and the finite overlap of the
  neighbourhoods.
 The first term on the right
  hand side vanishes thanks to
  Lemma~\ref{lem:Omega_star_and_h_vanishing} and since the estimator
  stays bounded (Proposition~\ref{prop:lower_bounds_u}). The second
  term vanishes thanks to Lemma~\ref{lem:est_Gk+}. Combining the above
  findings, we obtain by letting $k\to\infty$ that
  \begin{align*}
    \enorm[]{u-u_\infty}^2\Cleq \epsilon\|f\|_{L^2(\Omega)}.
  \end{align*}
  Since $\epsilon$ was arbitrary, this proves the assertion.
\end{proof}

For the second term, we may use stability

\begin{proof}[Proof of Theorem~\ref{thm:main}]
  Thanks to Lemma~\ref{lem:u=uinfty} and Theorem~\ref{thm:u_k_to_u_infty},
    we have that $\enorm[k]{u-u_k}\to 0$ as $k\to\infty$. 
  
  Combining the lower bound  Proposition~\ref{prop:lower_bounds_u}
  with Lemmas~\ref{lem:est_Gk+},~\ref{lem:u=uinfty} and~\ref{lem:Omega_star_and_h_vanishing}, we obtain
  \begin{align*}
    \est_k(\gridk)^2&\Cleq  \enorm[k]{u-u_k}^2
                      +\osc(\gridk,f)^2\\
    &= \enorm[k]{u-u_k}^2+
    \int_{\gridk ^-}\hG[k]^4\abs{f-\Pi_0
    f}^2+\int_{\gridk^+}\hG[k]^4\abs{f-\Pi_0 f}^2
    \\
    &\le
      \enorm[k]{u-u_k}^2+\norm[L^\infty(\Omega)]{\hG[k]\chi_{\Omega_k^-}}^4\norm{f}^2+\eta_k(u_k,\gridk^+)^2
    \\
    &\to 0
  \end{align*}
  as $k\to \infty$. Here we have used the boundedness
  $\int_{\elm}\hG[k]^4\abs{f-\Pi_0 f}^2\dx\le \eta_k(u_k,\elm)^2$.
\end{proof}

\section{Proofs of Lemma~\ref{lem:limit_space_is_Hilbert} and Theorem \ref{thm:u_k_to_u_infty}}
\label{sec:proof_u_k_to_u_infty}

In order to close the proof of the main result,
Theorem~\ref{thm:main}, we need to verify
Lemma~\ref{lem:limit_space_is_Hilbert} and Theorem
\ref{thm:u_k_to_u_infty}. The former states that $\V_\infty$ is a
Hilbert space with norm $\enorm[\infty]{\cdot}$, and thus a unique
solution $u_\infty\in\V_\infty$ of~\eqref{eq:C0IP_limit} exists;
see Corollary~\ref{cor:unique_solution_limit}. 
The latter proves that $u_\infty$ is indeed the limit of the \CIPG
approximations $\{u_k\}_{k\in\N_0}$ produced by the \ACIPGM.
We emphasise that in contrast to~\cite{KreuzerGeorgoulis:17}, the lack
of proper $H^2$-conforming subspaces of \CIPG spaces, does not allow
for a straight forward generalisation: For example, in order to
prove $\enorm[k]{u_\infty-u_k}\to 0$, 
in \cite{KreuzerGeorgoulis:17} the best-approximation property for inf-sup
stable conforming elements \cite{MorinSiebertVeeser:08,Siebert:11} is replaced by
a variant of Gudi's medius analysis \cite{Gudi:10}.
However, this required a discrete smoothing operator into
$\V_\infty$, whose construction is heavily based on the existence of a
proper conforming subspace of $\V_k$.

After recalling auxiliary \Poincare -
and Friedrichs-type inequalities, we shall introduce a
smoothing operator, which maps $\V_k$ into
$H_0^2(\Omega)$. This accounts for the fact that each 
$v\in\V_\infty$ on $\Omega^-$ is a restriction of an $H_0^2(\Omega)$
function. Moreover, we require an interpolation operator in order to 
deal with the piecewise discrete structure of $\V_\infty$ on
$\Omega^+$. Both operators need to satisfy some compatibility
conditions. 
Finally, we conclude the section with the proofs of
Lemma~\ref{lem:limit_space_is_Hilbert} and  Theorem
\ref{thm:u_k_to_u_infty}.

\subsection{Preliminary results}
\label{sec:auxiliary}

The following \Poincare and Friedrichs estimates are
subsequently used to prove stability  of the smoothing and
quasi-interpolation operators, defined below.  
\begin{lem}\label{lem:Poincare_estimate_local}
Let $\grid,\grid_*$ be some triangulations of $\Omega$ with
$\grid\le\grid_*$ and let $v \in \V(\grid_*)$. Moreover, for
$\elm\in \grid$ let
$D_\elm \subset \Omega$ be either $\omega_\grid(\elm)$ or $\omega_\grid^2(\elm)$.
 Then, there exists a linear polynomial $Q$, defined on $D_\elm$  such that we have 
 \begin{subequations}
   \begin{align}\label{eq:Poincare}
     \snorm[H^1(D_\elm)]{v-Q}^2 \Cleq \int_{D_\elm}
     \hG^2   \abs{\DG^2 v}^2  \dx+  
     \sum_{\substack{ \side \in \sides( \grid_*) \\  \side \subset
     D_\elm}} 
     \int_{  \side} \hG^2  \hG[\grid_*]^{-1}  \jumpn{}{v}^2 \ds.
   \end{align}
   If additionally $ \side \subset {D_\elm} \cap \partial
   \Omega$ for some $\side \in \sidesk[\grid]$, then
   \begin{align}
     \label{eq:Friedrichs}
     \snorm[H^1(D_\elm)]{v}^2 \Cleq \int_{D_\elm}
  \hG^2   \abs{\DG^2 v}^2  \dx+  
  \sum_{\substack{ \side \in \sides( \grid_*) \\  \side \subset
 D_\elm
     }} 
\int_{  \side} \hG^2  \hG[\grid_*]^{-1}  \jumpn{}{v}^2 \ds.
  \end{align}
\end{subequations}
\end{lem}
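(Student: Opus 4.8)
The plan is to exploit that $\nabla v$ is a function of bounded variation on the patch $D_\elm$ and to invoke the scale invariant Sobolev--Poincar\'e embedding $BV\arrowemb L^2$ in two space dimensions. The only genuinely delicate point is that the faces of $\grid_*$ contained in $D_\elm$ may be arbitrarily small compared with $h_\elm$; this is precisely what dictates the weight $\hG^2\hG[\grid_*]^{-1}$ on the jump contributions, and it will be controlled by a packing estimate together with the fact that $\hG\eqsim h_\elm$ on $D_\elm$.

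\emph{Step 1 (proof of \eqref{eq:Poincare}).} Since $v\in\V(\grid_*)$ is continuous and piecewise polynomial, $\nabla v\in BV(D_\elm)^2$, and --- arguing as in Proposition~\ref{prop:total_variation_of_gradient_bounded_by_energy}, now on $D_\elm$ in place of $\Omega$ --- its distributional derivative carries no tangential jump part, so that
\[
  |D(\nabla v)|(D_\elm)\le\int_{D_\elm}\abs{\DG^2v}\dx+\sum_{\substack{\side\in\sides(\grid_*)\\\side\subset D_\elm}}\int_\side\abs{\jumpn{}{v}}\ds.
\]
Let $Q$ be the affine polynomial with $\nabla Q=\frac1{\abs{D_\elm}}\int_{D_\elm}\nabla v\dx$ (fixing the additive constant by $\int_{D_\elm}(v-Q)=0$). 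The Sobolev--Poincar\'e inequality on $D_\elm$ --- with a constant uniform over the shape regular family $\{D_\elm\}$, e.g.\ by an affine change of variables to a reference patch --- yields
\(\snorm[H^1(D_\elm)]{v-Q}^2=\norm[L^2(D_\elm)]{\nabla v-\nabla Q}^2\Cleq\bigl(|D(\nabla v)|(D_\elm)\bigr)^2.\)
It remains to pass to $L^2$-type quantities: Cauchy--Schwarz gives $\bigl(\int_{D_\elm}\abs{\DG^2v}\bigr)^2\le\abs{D_\elm}\int_{D_\elm}\abs{\DG^2v}^2$, and, since $\hG[\grid_*]=\abs{\side}$ on each $\side\in\sides(\grid_*)$,
\[
  \Bigl(\sum_{\side\subset D_\elm}\int_\side\abs{\jumpn{}{v}}\Bigr)^{2}\le\Bigl(\sum_{\side\subset D_\elm}\abs{\side}^{2}\Bigr)\Bigl(\sum_{\side\subset D_\elm}\int_\side\hG[\grid_*]^{-1}\jumpn{}{v}^{2}\Bigr).
\]
As each face of $\grid_*$ inside $D_\elm$ borders an element of $\grid_*$ contained in $D_\elm$ and these elements tile $D_\elm$, shape regularity gives $\sum_{\side\subset D_\elm}\abs{\side}^2\Cleq\sum_{K'\in\grid_*,\,K'\subset D_\elm}\abs{K'}=\abs{D_\elm}$. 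Finally $\abs{D_\elm}\eqsim h_\elm^2$ and, by local quasi-uniformity, $\hG\eqsim h_\elm$ on $D_\elm$ and on all faces of $\grid_*$ in $D_\elm$, so the factor $\abs{D_\elm}$ becomes the weight $\hG^2$ and \eqref{eq:Poincare} follows.

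\emph{Step 2 (proof of \eqref{eq:Friedrichs}).} Assume in addition $\side\subset D_\elm\cap\partial\Omega$ for some $\side\in\sidesk[\grid]$; then $\abs{\side}\eqsim h_\elm$ and $\side\subset\partial D_\elm$. Write $R$ for the (common) right-hand side of \eqref{eq:Poincare}--\eqref{eq:Friedrichs}, so $\snorm[H^1(D_\elm)]{v-Q}^2\Cleq R$ by Step~1. From $\snorm[H^1(D_\elm)]{v}\le\snorm[H^1(D_\elm)]{v-Q}+\abs{\nabla Q}\,\abs{D_\elm}^{1/2}$ it suffices to show $\abs{\nabla Q}\,\abs{D_\elm}^{1/2}\Cleq R^{1/2}$. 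Using
\(\abs{\nabla Q}=\bigl|\tfrac1{\abs{\side}}\int_\side\nabla Q\ds\bigr|\le\tfrac1{\abs{\side}}\bigl(\norm[L^1(\side)]{\nabla v}+\norm[L^1(\side)]{\nabla v-\nabla Q}\bigr)\)
I bound both summands by a constant multiple of $R^{1/2}$. For the first: since $v\in H^1_0(\Omega)$ is continuous and piecewise polynomial, its trace on the straight segment $\side$ vanishes identically, hence $\partial_\tau v=0$ and $\abs{\nabla v}=\abs{\jumpn{}{v}}$ on $\side$, so $\norm[L^1(\side)]{\nabla v}=\norm[L^1(\side)]{\jumpn{}{v}}\le\abs{\side}^{1/2}\norm[L^2(\side)]{\jumpn{}{v}}\Cleq R^{1/2}$, where the last step splits $\side$ into the faces of $\grid_*$ it contains (each of length $\le\abs{\side}\eqsim h_\elm$) and uses the weight conversion of Step~1. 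For the second: apply the scale invariant $BV$-trace inequality $\norm[L^1(\partial D_\elm)]{g}\Cleq h_\elm^{-1}\norm[L^1(D_\elm)]{g}+|Dg|(D_\elm)$ to $g=\nabla v-\nabla Q$; here $|Dg|(D_\elm)=|D(\nabla v)|(D_\elm)\Cleq R^{1/2}$ as in Step~1, while $\norm[L^1(D_\elm)]{g}\le\abs{D_\elm}^{1/2}\norm[L^2(D_\elm)]{\nabla v-\nabla Q}=\abs{D_\elm}^{1/2}\snorm[H^1(D_\elm)]{v-Q}\Cleq h_\elm R^{1/2}$, whence $\norm[L^1(\side)]{\nabla v-\nabla Q}\le\norm[L^1(\partial D_\elm)]{g}\Cleq R^{1/2}$. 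Dividing by $\abs{\side}\eqsim h_\elm$ and using $\abs{D_\elm}^{1/2}\eqsim h_\elm$ gives $\abs{\nabla Q}\,\abs{D_\elm}^{1/2}\Cleq R^{1/2}$, which together with Step~1 yields \eqref{eq:Friedrichs}.

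The step I expect to be most delicate is exactly this scale bookkeeping caused by arbitrarily fine faces of $\grid_*$ inside the fixed-scale patch $D_\elm$: the exponents only close because of the packing bound $\sum_{\side\subset D_\elm}\abs{\side}^2\Cleq\abs{D_\elm}$ and the relation $\hG\eqsim h_\elm$ on $D_\elm$. A secondary point is to confirm that the Sobolev--Poincar\'e and $BV$-trace constants on the patches $\{D_\elm\}$ are uniform, which follows from shape regularity and conformity of $\grids$ by a standard scaling argument.
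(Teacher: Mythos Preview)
Your argument is correct. Both steps close, and in particular your handling of the scale mismatch between $\grid$ and $\grid_*$ via the packing bound $\sum_{\side\subset D_\elm}\abs{\side}^2\Cleq\abs{D_\elm}$ is exactly the right mechanism for producing the weight $\hG^2\hG[\grid_*]^{-1}$.

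The paper's own proof is not really a different approach but rather a citation of the same underlying machinery: for \eqref{eq:Poincare} it takes $Q$ to be the $H^1$-orthogonal projection onto $\polspace[1](D_\elm)$ (which coincides with your mean-value choice, since $\nabla Q=\overline{\nabla v}$) and invokes \cite[Proposition~1]{KreuzerGeorgoulis:17}, which is the analogous broken Poincar\'e inequality for second-order problems applied componentwise to $\nabla v$; for \eqref{eq:Friedrichs} it invokes \cite[Corollary~4.3]{BuffaOrtner:09} together with the same proposition. Those cited results are themselves proved via the $BV\arrowemb L^2$ Sobolev--Poincar\'e embedding in two dimensions, so what you have done is unpack the references and give a self-contained argument. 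Your Step~2 differs in presentation---you estimate $\abs{\nabla Q}$ directly via the boundary trace of $\nabla v$ and a scale-invariant $BV$ trace inequality, whereas the paper leans on the Friedrichs-type result of Buffa--Ortner---but the substance is the same. The gain of your version is that it makes the dependence on shape regularity and the role of the packing estimate explicit; the gain of the paper's version is brevity.
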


\begin{proof}
Let $Q \in \polspace[1](D_\elm)$ be the
$H^1$-orthogonal projection of $v$ into $\polspace[1](D_\elm)$,
i.e.,
\begin{align*}
  \scp[D_\elm]{\nabla(v-Q)}{\nabla P}=0 \quad \forall P \in
  \polspace[1](D_\elm)\qquad \text{and}\qquad \int_{D_\elm} Q\dx =\int_{D_\elm} v \dx.
\end{align*}
Now the proof of~\eqref{eq:Poincare} is a direct consequence of
\cite[Proposition 1]{KreuzerGeorgoulis:17}.

The second claim~\eqref{eq:Friedrichs} follows from \cite[Corollary 4.3]{BuffaOrtner:09} together with
\cite[Proposition 1]{KreuzerGeorgoulis:17} and the definition of the jump
terms on boundary sides.
\end{proof}

The following lemma extends the previous result to the limit space $\V_\infty$.
\begin{lem}[\Poincare-Friedrichs $\V_\infty$]
\label{lem:poincare_local_limit}
  Let   $v \in
  \V_\infty$ and let either $D_\elm =\omega_k(\elm)$ or
  $D_\elm=\omega_k^2(\elm)$ for some $\elm\in \gridk$ and $k\in\N_0$.
  Then, there exists $Q\in\polspace[1](D_\elm)$, such that
    \begin{align*}
      \snorm[H^1(D_\elm)]{v-Q}^2 \Cleq \int_{D_\elm}
      \hG[k]^2 \abs{\DG^2 v}^2 \dx + \sum_{\substack{\side \in \sides^+
      \\
      \side \subset D_\elm}}
      \int_\side \hG[k]^2 \hG[+]^{-1} \jumpn{}{v}^2 \ds.
    \end{align*}
    If in addition 
    $ \side \subset {D_\elm} \cap \partial \Omega$ for some
    $\side \in \sidesk[\grid]$, then
    \begin{align*}
      \snorm[H^1(D_\elm)]{v}^2 \Cleq \int_{D_\elm}
      \hk^2   \abs{\DG^2 v}^2  \dx+  
      \sum_{\substack{ \side \in \sides^+ \\  \side \subset
      D_\elm}} 
      \int_{  \side} \hk^2  \hG[+]^{-1}  \jumpn{}{v}^2 \ds.
    \end{align*}

\end{lem}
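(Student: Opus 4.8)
The plan is to reduce the statement for $\V_\infty$ to the already-established discrete version, Lemma~\ref{lem:Poincare_estimate_local}, by a density/approximation argument using the defining sequence $\{v_k\}_{k\in\N_0}\subset\V_k$ of $v\in\V_\infty$. First I would fix $v\in\V_\infty$, $\elm\in\gridk$, and $D_\elm\in\{\omega_k(\elm),\omega_k^2(\elm)\}$, and pick the approximating sequence $v_m\in\V_m$ with $\enorm[m]{v-v_m}\to 0$; by refining the index if necessary we may assume $\grid_m\ge\gridk$ for all $m$ under consideration, so that $D_\elm$ is a union of elements of $\grid_m$ and Lemma~\ref{lem:Poincare_estimate_local} applies to $v_m\in\V(\grid_m)$ on $D_\elm$ with $\grid=\gridk$, $\grid_*=\grid_m$. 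This yields linear polynomials $Q_m\in\polspace[1](D_\elm)$ with
\begin{align*}
  \snorm[H^1(D_\elm)]{v_m-Q_m}^2 \Cleq \int_{D_\elm}
  \hG[k]^2 \abs{\DG^2 v_m}^2 \dx + \sum_{\substack{\side\in\sides(\grid_m)\\ \side\subset D_\elm}}
  \int_\side \hG[k]^2 \hG[\grid_m]^{-1} \jumpn{}{v_m}^2 \ds.
\end{align*}

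The next step is to pass to the limit $m\to\infty$ on both sides. On the left, since $\enorm[m]{v-v_m}\to0$ we have $v_m\to v$ in $H^1(\Omega)$ (via the broken \Poincare--Friedrichs inequality, Proposition~\ref{prop:Friedrich_type_estimate_global}, applied to $v-v_m\in\V(\grid_m)$, whose right-hand side is controlled by $\enorm[m]{v-v_m}$; note $\snorm[H^1]{\cdot}$ and $\norm[H^1_0]{\cdot}$ are equivalent on $H^1_0(\Omega)$). Hence $\{Q_m\}$ is bounded in the finite-dimensional space $\polspace[1](D_\elm)$, so along a subsequence $Q_m\to Q\in\polspace[1](D_\elm)$ in any norm, and $\snorm[H^1(D_\elm)]{v_m-Q_m}\to\snorm[H^1(D_\elm)]{v-Q}$. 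On the right, $\DG^2 v_m\to\DG^2 v$ in $L^2(\Omega)^{2\times 2}$ (this is part of the convergence $\enorm[m]{v-v_m}\to0$, since $\int_\Omega|\DG^2(v-v_m)|^2\le\enorm[m]{v-v_m}^2$), so the first term converges to $\int_{D_\elm}\hG[k]^2|\DG^2 v|^2\dx$. For the jump term I would argue as in the proof of Proposition~\ref{prop:boundedness_of_energy_norm}: split $\sides(\grid_m)\cap\{\side\subset D_\elm\}$ into the "$+$" faces (those eventually in $\grid^+$) and the rest; on the "$+$" part, monotone convergence (mesh-size reduction $\hG[\grid_m]^{-1}\nearrow\hG[+]^{-1}$ and inclusion of skeletons) combined with $v_m\to v$ in the energy norm gives convergence to $\sum_{\side\in\sides^+,\,\side\subset D_\elm}\int_\side\hG[k]^2\hG[+]^{-1}\jumpn{}{v}^2\ds$; on the remaining "$-$" part, the localised statement of Proposition~\ref{prop:boundedness_of_energy_norm} forces $\int_{\sides(\grid_m)\setminus\sides^+,\,\side\subset D_\elm}\hG[\grid_m]^{-1}\jumpn{}{v}^2\ds\to0$, and since $\hG[k]^2$ is a fixed bounded factor on $D_\elm$ this contribution vanishes in the limit. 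This establishes the first inequality; the second (Friedrichs) inequality follows verbatim, invoking~\eqref{eq:Friedrichs} instead of~\eqref{eq:Poincare} in the discrete step, which is legitimate precisely because $\side\subset D_\elm\cap\partial\Omega$ for some $\side\in\sidesk[\grid]$ is assumed and this property persists for $\grid_m\ge\gridk$.

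The main obstacle I anticipate is the careful handling of the boundary skeleton terms in the passage to the limit: one must make sure that the faces appearing in the discrete estimate on the fine mesh $\grid_m$ are correctly matched against $\sides^+$ faces in the limit, that the "spurious" refined faces (those in $\Omega^-_k$-type regions that keep getting bisected) do not contribute in the limit, and that the $L^1$-traces of $\nabla v$ used to define $\jumpn{}{v}$ on $\sides^+$ are consistent with the $L^2$-traces of $\nabla v_m$ — this is where $\nabla v\in BV(\Omega)^2$ and the trace theorem \cite[Theorem 3.88]{Ambrosio:2000} already cited are essential. A secondary technical point is that the constant $Q$ obtained in the limit depends on the chosen subsequence; but since the estimate holds for \emph{some} $Q\in\polspace[1](D_\elm)$, this is harmless. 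Everything else is a routine repetition of the bookkeeping already carried out in Proposition~\ref{prop:boundedness_of_energy_norm} and \cite[Proposition 1]{KreuzerGeorgoulis:17}.
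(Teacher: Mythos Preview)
Your proposal is correct and follows essentially the same route as the paper: apply Lemma~\ref{lem:Poincare_estimate_local} to the approximating sequence $v_m\in\V_m$ and pass to the limit on both sides using $\enorm[m]{v-v_m}\to0$ together with the localised statement of Proposition~\ref{prop:boundedness_of_energy_norm}. The only difference is cosmetic: the paper fixes $Q$ a priori as the $H^1$-orthogonal projection of $v$ onto $\polspace[1](D_\elm)$ and shows $Q_\ell\to Q$ via stability of that projection, whereas you extract $Q$ by compactness (note that your argument only bounds $\nabla Q_m\in\R^2$, not $Q_m$ itself, since $|\cdot|_{H^1(D_\elm)}$ vanishes on constants --- but this is all that is needed, as only $\snorm[H^1(D_\elm)]{v-Q}$ enters the estimate).
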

\begin{proof}
  We follow the ideas of \cite[Lemma 13]{KreuzerGeorgoulis:17} and
  let $Q \in \polspace[1](D_\elm)$ be the
$H^1$-orthogonal projection of $v$ into $\P_1(D_\elm)$,
defined by
\begin{align*}
  \scp[D_\elm]{\nabla(v-Q)}{\nabla P}=0 \quad \forall P \in
  \polspace[1](D_\elm)\qquad \text{and}\qquad \int_{D_\elm} Q\dx =\int_{D_\elm} v \dx.
\end{align*}  
Since
$v \in \V_\infty$, there exists a sequence $v_\ell \in \V_\ell$,
$\ell\in\N_0$,  with
  $\lim_{\ell\to\infty}\enorm[\ell]{v-v_\ell} \to 0 $  and $\limsup_{\ell \to \infty} \enorm[\ell]{v_\ell}<
  \infty$. From 
  Proposition \ref{prop:boundedness_of_energy_norm} we have 
  \begin{align*}
    \int_{D_\elm}
  |\DG^2 v_\ell|^2  \dx&+  
  \sum_{\substack{ \side \in \sidesk[\ell] \\  \side \subset
  D_\elm}} 
\int_{  \side}  h_{\ell}^{-1}  \jumpn{}{v_\ell}^2 \ds\\
&\nearrow  \int_{D_\elm}
  \abs{\DG^2 v} ^2 \dx+  
  \sum_{\substack{ \side \in \sides^+ \\  \side \subset
  D_\elm}} 
\int_{  \side}  \hG[+]^{-1}  \jumpn{}{v}^2 \ds
  \end{align*}
as $\ell \to \infty$. Let $\ell \geq k$. Thanks to
Lemma~\ref{lem:Poincare_estimate_local}  there exists
$Q_\ell\in\polspace[1](D_\elm)$ with
\begin{align*}
  \snorm[H^1(D_\elm)]{v_\ell-Q_\ell}^2 \Cleq \int_{D_\elm}
  \hG[k]^2   |\DG^2 v_\ell|^2  \dx+  
  \sum_{\substack{ \side \in \sidesk[\ell] \\  \side \subset
  D_\elm}} 
\int_{  \side} \hG[k]^2  \hG[\ell]^{-1}  \jumpn{}{v_\ell}^2 \ds\\
\nearrow  \int_{D_\elm}
  \hG[k]^2   |\DG^2 v|^2  \dx+  
  \sum_{\substack{ \side \in \sides^+ \\  \side \subset
  D_\elm}} 
\int_{  \side} \hG[k]^2  \hG[+]^{-1}  \jumpn{}{v}^2 \ds,
\end{align*}
as $\ell \to \infty$; compare also with
Proposition~\ref{prop:boundedness_of_energy_norm}.
From the
definition of $Q$ and $Q_\ell$, we have from
Proposition~\ref{prop:Friedrich_type_estimate_global} that 
\begin{align*}
  \snorm[H^1(D_\elm)]{Q_\ell-Q}^2 \Cleq
  \snorm[H^1(D_\elm)]{v_\ell-v}^2 \leq
  \snorm[H^1_0(\Omega)]{v_\ell-v}^2 \Cleq \enorm[\ell]{v_\ell -v}^2
  \to 0 
\end{align*}
as $\ell \to \infty$. Therefore,
Proposition~\ref{prop:Friedrich_type_estimate_global} implies 
$\snorm[H^1(D_\elm)]{v_\ell-Q_\ell}^2 \to
\snorm[H^1(D_\elm)]{v-Q}^2$ as $\ell \to \infty$, which finishes the proof.
\end{proof}

\subsection{Smoothing and quasi-interpolation}
\label{sec:interpol}

Before introducing the interpolation operator, we first discuss
a smoothing operator $\Cipolk[\grid]:\VG\to H_0^2(\Omega)$, $\grid\in\grids$.
To this end, following the ideas of
\cite{BrennerGudiSung2009,GeorgoulisHoustonVirtanen:09}, we 
introduce the so-called Hsieh-Clough-Tocher (HCT)
macro element constructed in~\cite{DouglasDupontPercellScott:79}.
\begin{df}[HCT element]\label{df:HTC}
  Let $\grid\in\grids$ and $\elm\in \grid$. Then the HCT nodal macro finite
  element $(\elm,\cpolspace(\elm), \cDOFs)$ is defined as follows.
  \begin{enumerate}[leftmargin=1cm, label={\alph{*})}]
  \item The local space is given by
    \begin{align*}
      \cpolspace(\elm)=\set{p \in C^1(\elm)\colon p|_{\elm_i}
    \in \polspace[4] (\elm_i), i=1, 2,3}.
    \end{align*}
    Here the three triangles $\elm_1,\elm_2$ and
    $\elm_3$ denote subtriangulation of $\elm$ obtained by connecting the vertices of $\elm$ with its
    barycenter; compare with Figure~\ref{im:element_subdivided}.
    \item The degrees of freedom $\cDOFs$ are given by (compare also with Figure~\ref{im:P2_p4_HCT})
  \begin{itemize}[leftmargin=.5cm]
  \item the function value and the gradient at the vertices of $\elm$,
  \item the function value at one interior point of each side
    $\side\in\sidesk[\grid]$, $\side\subset\partial\elm$.
  \item the normal derivative at two disctinct points in the interior
    of each  side
    $\side\in\sidesk[\grid]$, $\side\subset\partial\elm$. 
   \item the function value and the gradient at the barrycenter of $\elm$.
   \end{itemize}
\end{enumerate}
The corresponding global $H^2$-conforming finite element space is
defined as
\begin{align*}
  \widetilde \V(\grid)\definedas \{V\in C^1(\bar\Omega)\colon V|_{\elm}\in
  \cpolspace(\elm)~\text{for all}~\elm\in\grid\}
\end{align*}
and its global degrees of freedom are given by
\begin{align*}
  \cDOFs[\grid]:=\bigcup_{\elm\in \grid}\cDOFs,
\end{align*}
which is well-posed thanks to conformity of $\widetilde
\V(\grid)\subset H^2(\Omega)$.
\end{df}

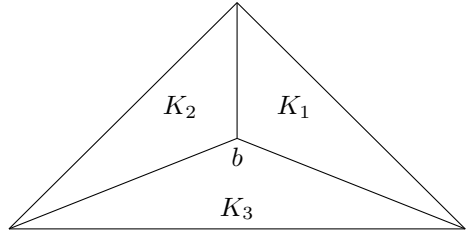
\begin{figure}[]
  \centering
  \begin{tikzpicture}[scale=.6]
  \coordinate (A) at (0,0);
  \coordinate (B) at (10,0);
  \coordinate (C) at (5,5);
  \coordinate [label=below:$b$] (b) at (5,2);

\draw (A)-- node[above]{$\elm_3$}(B);

\draw (A) -- (C) coordinate[midway] (AC);
\path  (AC) --  node[above] {$\elm_2$}  (b);

\draw (B) -- (C) coordinate[midway] (BC);
\path  (BC) --  node[above] {$\elm_1$}  (b);
\draw (A) -- (b);
\draw(B) --(b);
\draw(b) --(C);
  
\end{tikzpicture}
\caption{A macro triangle $\elm$ subdivided into three small sub triangles which share a common point $b$.}
\label{im:element_subdivided}
\end{figure}

 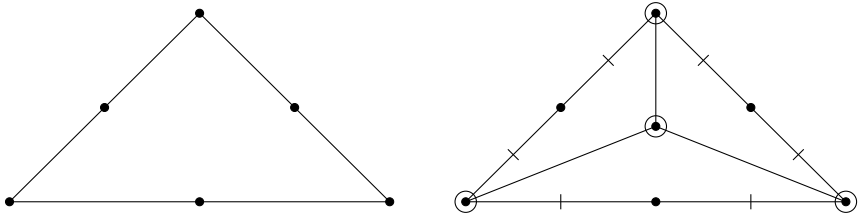
\begin{figure}
\centering
\begin{tikzpicture}[scale=1]
  \coordinate (A) at (0,0); 
  \coordinate (B) at (5,0);
  \coordinate (C) at (2.5,2.5);

  \coordinate (D) at (6,0); 
  \coordinate (E) at (11,0);
  \coordinate (F) at (8.5,2.5);

  \draw (A) --  (B) coordinate[midway] (AB); 
  \draw (A) --(C) coordinate[midway] (AC);
  \draw (B) -- (C) coordinate[midway] (BC);

  \filldraw (A) circle (1.5pt); 
  \filldraw (B) circle (1.5pt);
  \filldraw (C) circle (1.5pt);
  \filldraw (AB) circle (1.5pt);
  \filldraw (AC) circle (1.5pt);
  \filldraw (BC) circle (1.5pt);

   \draw (D) -- (E) coordinate[midway] (DE); 
   \draw (D) --(F) coordinate[midway] (DF);
   \draw (F)-- (E) coordinate[midway] (FE);
   
\filldraw let \p1 = (DE) in (\x1,1) circle (1.5pt) coordinate(b);

\draw (D)--(b);
\draw (E)--(b);
\draw (F)--(b);

\draw (D) circle (4pt);
\draw (E) circle (4pt);
\draw (F) circle (4pt);
\draw (b) circle (4pt);

 \filldraw (D) circle (1.5pt); 
  \filldraw (E) circle (1.5pt);
  \filldraw (F) circle (1.5pt);
  \filldraw (DE) circle (1.5pt);
  \filldraw (DF) circle (1.5pt);
  \filldraw (FE) circle (1.5pt);

\draw[opacity=0](D)-- (DF) coordinate[midway] (DDF);
 \draw  ($(DDF)!0.1cm!270:(D)$) -- ($(DDF)!0.1cm!90:(D)$);

\draw[opacity=0](DF)-- (F) coordinate[midway] (DFF);
 \draw  ($(DFF)!0.1cm!270:(F)$) -- ($(DFF)!0.1cm!90:(F)$);

\draw[opacity=0](F)-- (FE) coordinate[midway] (FFE);
 \draw  ($(FFE)!0.1cm!270:(F)$) -- ($(FFE)!0.1cm!90:(F)$);
 
\draw[opacity=0](E)-- (FE) coordinate[midway] (EEF);
 \draw  ($(EEF)!0.1cm!270:(E)$) -- ($(EEF)!0.1cm!90:(E)$);

\draw[opacity=0](D)-- (DE) coordinate[midway] (DDE);
 \draw  ($(DDE)!0.1cm!270:(D)$) -- ($(DDE)!0.1cm!90:(D)$);

\draw[opacity=0](DE)-- (E) coordinate[midway] (DEE);
 \draw  ($(DEE)!0.1cm!270:(E)$) -- ($(DEE)!0.1cm!90:(E)$);

\end{tikzpicture}
\caption{The Lagrange element of degree two and the corresponding macro element of degree four. Here point evaluations are denoted by small dots, (first) partial derivatives by circles and normal derivatives by lines.}
\label{im:P2_p4_HCT}
\end{figure}


Since $\polspace[2](\elm)\subset \cpolspace[4](\elm)$, we can apply
$\cDOFs[\elm]$ to $\polspace[2](\elm)$. We therefore define the  \textit{smoothing operator}  $\Cipolk[\grid]
\colon  \VG \to \widetilde \V(\grid)\subset H^2_0(\Omega)$, by setting for
all degrees of freedom $N_z\in\cDOFs[\grid]$: 
\begin{align}\label{df:enriching_operator}
  N_z(\Cipolk[\grid](v))=
  \begin{cases}
    \frac{\abs{\elm}}{\abs{\omega_k(z)}} \sum\limits_{\elm \in
      \omega_k(z)}N_z^\elm(v|_{\elm}) &\text{if }z \in \cnodes\cap\Omega \\
0 & \text{if } z\in \cnodes\cap\partial \Omega.
  \end{cases}
\end{align}
Here $\cnodes$ denotes the set of nodes $z$ associated with some
degree of freedom $N_z\in\cDOFs[\grid]$ and corresponding local degree
of freedom $N_z^\elm\in\cDOFs[\elm]$. Note that there may be different
degrees of freedom associated with one node; compare with Figure~\ref{im:P2_p4_HCT}.

\begin{lem}[$H^2_0(\Omega)$-smoothing]
\label{lem:stability_enriching}
 Let $\grid\in\grids$. The operator $\Cipolk[\grid] \colon \VG \to H^2_0(\Omega)$ defined
 in~\eqref{df:enriching_operator} satisfies for each $\elm\in\grid$ 
  \begin{align*}
    \norm[ \elm]{D^\alpha (v-\Cipolk[\grid](v))}^2
    \Cleq \int_{\sides(\neighG(\elm))}
   \abs{  \hG^{\frac32
    -\alpha}\jumpn{}{ v}}^2\ds,\qquad \alpha=0,1,2
  \end{align*}
  where the hidden constant depends only on the shape coefficient of $\grid_0$.
\end{lem}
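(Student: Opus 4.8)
The plan is to estimate the error $v - \Cipolk[\grid](v)$ degree of freedom by degree of freedom, exploiting the fact that $\Cipolk[\grid]$ preserves a function $w$ whenever the local contributions $N_z^\elm(w|_\elm)$ agree across all elements $\elm\in\omega_k(z)$ — which is precisely the situation for any $w$ that is already $C^1$ across the relevant faces, e.g.\ any fixed polynomial $p\in\polspace[2](\neighG(\elm))$ viewed as an element of each neighbouring patch. First I would fix $\elm\in\grid$ and, using the standard reference-element / scaling argument for the HCT macro element together with shape regularity, write
\begin{align*}
  \norm[\elm]{D^\alpha(v - \Cipolk[\grid](v))}^2 \Cleq
  \sum_{z\in\cnodes\cap\overline\elm} \hG[\elm]^{2-2\alpha}\,\hG[\elm]^{?}
  \bigl|N_z(v|_\elm) - N_z(\Cipolk[\grid](v))\bigr|^2,
\end{align*}
where each local functional $N_z$ is rescaled to be $\Landau(1)$ on the reference configuration; the powers of $\hG[\elm]$ are then dictated by the order of the derivative appearing in $N_z$ (function value, first derivative, or normal derivative) and by the $D^\alpha$ on the left. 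Since on $\elm$ itself $\Cipolk[\grid](v)$ is determined by the boundary functionals of the macro element, only nodes $z$ on $\partial\elm$ contribute.

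Next I would bound each nodal difference $N_z(v|_\elm) - N_z(\Cipolk[\grid](v))$. For an interior node $z$, by~\eqref{df:enriching_operator} this difference is the averaging defect
\begin{align*}
  N_z^\elm(v|_\elm) - \frac{\abs{\elm}}{\abs{\omega_k(z)}}\sum_{\elm'\in\omega_k(z)} N_z^{\elm'}(v|_{\elm'})
  = \frac{\abs{\elm}}{\abs{\omega_k(z)}}\sum_{\elm'\in\omega_k(z)}\bigl(N_z^\elm(v|_\elm) - N_z^{\elm'}(v|_{\elm'})\bigr),
\end{align*}
so it suffices to control $|N_z^\elm(v|_\elm) - N_z^{\elm'}(v|_{\elm'})|$ for two elements $\elm,\elm'$ sharing the node (hence sharing a face, or connected through finitely many faces in the patch). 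Subtracting an arbitrary quadratic $p$ — which leaves both local functionals invariant because $p$ is globally $C^1$ — this difference equals $N_z^\elm((v-p)|_\elm) - N_z^{\elm'}((v-p)|_{\elm'})$, and a scaled trace inequality on each element bounds this by (powers of $\hG[\elm]$ times) $\snorm[H^s(\elm)]{v-p}$-type quantities on the neighbouring elements. Choosing $p$ to be the polynomial $Q$ furnished by the \Poincar\'e estimate Lemma~\ref{lem:Poincare_estimate_local} (applied on $D_\elm = \omega_\grid(\elm)$ or $\omega_\grid^2(\elm)$, with $\grid_*=\grid$ so that $\jumpn{}{v}$ itself appears on the right), the right-hand side collapses to $\int_{\sides(\neighG(\elm))}\hG^{3-2\alpha}\jumpn{}{v}^2\ds$ after matching the powers of $\hG$; here I would additionally use an inverse estimate to pass from $|\DG^2 v|$ on $\omega_\grid(\elm)$ back to face jumps, or simply carry the $\DG^2 v$ term along if the statement allows — but since the claimed bound has \emph{only} jump terms, the key point is that for $v\in\VG$ one can convert $\int \hG^2|\DG^2 v|^2$ into $\int \hG\,\jumpn{}{v}^2$ over the patch via a polynomial inverse estimate combined with the broken Friedrichs inequality Proposition~\ref{prop:Friedrich_type_estimate_global}. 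Boundary nodes are even easier: $N_z(\Cipolk[\grid](v))=0$ matches the fact that the true jump-trace of $v$ on $\partial\Omega$ is $\jumpn{}{v}|_{\side}=\nabla v\cdot\normal$, and the Friedrichs variant~\eqref{eq:Friedrichs} of Lemma~\ref{lem:Poincare_estimate_local} supplies the bound without needing a nonzero $Q$.

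Finally I would sum over the finitely many nodes on $\partial\elm$ and over the bounded number of elements in $\neighG(\elm)$, using local quasi-uniformity to replace every local $\hG[\elm']$ by $\hG[\elm]$ up to a constant, which yields the asserted estimate with a constant depending only on the shape coefficient of $\grid_0$. \textbf{The main obstacle} I anticipate is the careful bookkeeping of mesh-size powers when passing, via scaled trace and inverse inequalities, from the volume term $\int_{D_\elm}\hG^2|\DG^2 v|^2$ produced by the \Poincar\'e estimate to the purely face-based right-hand side $\int_{\sides(\neighG(\elm))}|\hG^{3/2-\alpha}\jumpn{}{v}|^2$ — i.e.\ verifying that the inverse-estimate conversion is consistent across all three values $\alpha=0,1,2$ and does not lose a power of $\hG$; this is where shape regularity and the polynomial nature of $v\in\VG$ (as opposed to a general $H^2(\grid)$ function) are essential.
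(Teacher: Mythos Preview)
Your detour through the \Poincare\ estimate Lemma~\ref{lem:Poincare_estimate_local} creates a genuine gap. That lemma produces, on the right-hand side, the volume term $\int_{D_\elm}\hG^2|\DG^2 v|^2\dx$ in addition to the jump contribution, and your proposed ``conversion'' of this volume term into face jumps via an inverse estimate and Proposition~\ref{prop:Friedrich_type_estimate_global} is false: take $v$ equal to a single quadratic polynomial on the whole patch $\omega_\grid(\elm)$; then all jumps $\jumpn{}{v}$ vanish there while $\DG^2 v\neq 0$. No inverse inequality can bound a nonzero quantity by zero, so this step cannot work, and the obstacle you flag at the end is not a bookkeeping issue but a structural one.

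The paper's proof simply cites \cite[(2.10)--(2.12)]{BrennerGudiSung2009} together with an inverse estimate, and the underlying argument is much more direct than yours. Because $v\in\VG\subset C(\bar\Omega)$, every \emph{function-value} degree of freedom in $\cDOFs$ (vertices, edge midpoints, barycentre) has zero averaging defect, and the barycentric gradient is an interior node with trivial averaging. The only nodes with a nonzero defect are the vertex gradients and the edge normal derivatives, and for these the defect $N_z^\elm(v|_\elm)-N_z^{\elm'}(v|_{\elm'})$ is \emph{itself} a pointwise value of $\jumpn{}{v}$ on some face of $\neighG(\elm)$ (for vertex gradients one telescopes around the vertex patch; the tangential component drops out by continuity of $v$). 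Since $\jumpn{}{v}|_\side\in\polspace[1](\side)$, the inverse estimate $|\jumpn{}{v}(\xi)|^2\Cleq \hG[\side]^{-1}\int_\side\jumpn{}{v}^2\ds$ turns these pointwise defects directly into the face $L^2$ norms, and the scaling of the HCT nodal basis supplies the factor $\hG^{3-2\alpha}$. No \Poincare\ step, no volume term, and hence no need for the problematic conversion.
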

\begin{proof}
  The proof follows from the estimates
  \cite[(2.10)-(2.12)]{BrennerGudiSung2009} together with an inverse estimate.
\end{proof}

Denoting by
$\cnodes[\elm]$ the set of points in $\elm$ associated with the degrees of freedom
$\cDOFs[\grid]$, we have $\nodes[\elm]\definedas \nodes\cap\elm\subset\cnodes[\elm]$.
This enables us to define a Cl{\'e}ment-type quasi-interpolation
$\Gipolk[\grid]:L^1(\Omega)\to L^1(\Omega)$, which is locally a left inverse of the
smoothing operator $\Cipolk[\grid]$ on $\VG$, i.e.,
\begin{align*}
    \Gipolk[\grid] \circ \Cipolk[\grid] |_{\VG}=\id|_{\VG}.
\end{align*}
To this end, we define the operator based on extensions 
of the local degrees of freedoms $\cDOFs[\elm]$
instead of $\DOFs[\elm]$.

To be more precise, for $\elm \in \grid$, let
$\set{\phi^\elm_N \colon N\in\cDOFs[\elm]}$ be the nodal basis of
$\cpolspace[4](\elm)$ and identify $\cDOFs[\elm]$ with the dual basis $
  \set{\phi^{\elm,*}_N \colon N\in\cDOFs[\elm]}\subset\cpolspace[4](\elm)$
, i.e.
\begin{align*}
  \scp[L^2(\elm)]{\phi^{\elm,*}_M}{ \phi^{\elm}_N}= M(\phi^\elm_N)=\delta_{NM}\quad N,M\in \cDOFs[\elm].
\end{align*}
Recalling Definition~\ref{df:HTC}, we have that $\cDOFs$ contains 
the point evaluation 
in the vertices and edge midpoints of $\grid$ (the Lagrange nodes $\nodes[\elm]$
of $\polspace[2](\elm)$). For $z \in\nodes[\elm]$, we denote the corresponding dual 
basis functions by
\begin{align*}
  \phi^{\elm,*}_z\in \set{\phi^{\elm,*}_N \colon
  N\in\cDOFs[\elm]}\quad\text{such that}\quad \scp[L^2(\elm)]{\phi^{\elm,*}_z}{
  v}=v(z)~\text{for all}~v\in \cpolspace[4](\elm).
\end{align*}

Extending
each local dual function by zero to a function in $L^2(\Omega)$ we
define 
\begin{align*}
\phi^{*}_z:=  \frac{1}{\abs{\omega_\grid(z)}} \sum_{\elm \in \omega_\grid(z)}
  \phi^{\elm,*}_z \in \V(\grid)^*,\quad z\in\nodes[\grid].
\end{align*}
Obviously, $\supp(\phi_z^*)\subset \omega_\grid(z)$ and
\begin{align*}
  \scp[L^2(\Omega)]{ \phi_z^*}{ v} = v(z)\qquad\text{for all}~z\in
  \nodes[\grid], v\in \widetilde \V(\grid).
\end{align*}

We define a
quasi-interpolation operator 
$\Gipolk[\grid] \colon L^1(\Omega) \to \VG$ by
\begin{align}
  \label{df:interpolation_operator_global}
    (\Gipolk[\grid] v)(z):=
  \begin{cases}
    \scp[L^2(\Omega)]{\phi_z^*}{v},\quad&\text{if}~z\in \nodes[\grid]\cap\Omega
    \\
    0, \quad&\text{if}~z\in \nodes[\grid]\cap\partial\Omega.
  \end{cases}
\end{align}

Since this definition differs from standard Cl{\'e}ment interpolation in
\cite{Clement1975:Approximation-b} only by the choice of a different
but nevertheless piecewise polynomial dual basis representation, we
obtain the following results from standard arguments; see
\cite{Clement1975:Approximation-b}.  

\begin{lem}[Quasi-interpolation onto $\VG$]
  \label{lem:properties_interpolation_operator}
  For $\grid\in\grids$ let $\Gipolk[\grid] \colon 
    L^1(\Omega)\to \VG$ be defined as
    in~\eqref{df:interpolation_operator_global}. Then we have that:
  \begin{enumerate}[leftmargin=1cm,  label*=\alph*)]
  \item \label{it:stability_interpolation}
$\Gipolk[\grid]\colon L^p(\Omega) \to L^p(\Omega)$ is a linear
    and bounded projection for all $1 \leq p \leq \infty$ and
    is stable in the following sense: If $v \in H_0^1(\Omega)$ and $\ell \in \N$, then
    \begin{align*}
      \int_{\omega_\grid^\ell(\elm)} \abs{\nabla \Gipolk[\grid] v}^2 \dx \Cleq
      \int_{\omega_\grid^{\ell+1}(\elm)} \abs{\nabla v}^2
      \dx\qquad\text{for all}~\elm\in\grid.
    \end{align*}
  \item \label{it:interpol_is_projection}
 $\Gipolk[\grid] v \in \VG $ for all $v \in L^1(\Omega)$,
  \item \label{it:interpol_preserves_polynomials_locally}
$\Gipolk[\grid] v|_\elm =v|_\elm$ on $\elm \in \grid$ with $\elm
    \cap \partial \Omega = \emptyset$ if
    $v|_{\omega_\grid(\elm)} \in \polspace[2](\neighG(\elm))\cap C(\omega_\grid(\elm))$,
  \item \label{it:interpol_is_left_inverse_of_enriching}
    $\Gipolk[\grid](\Cipolk[\grid] v)|_\elm=v|_\elm$ on $\elm \in
    \grid$ if 
    $v|_{\omega_\grid(\elm)} \in \polspace[2](\neighG(\elm))\cap
    C(\omega_\grid(\elm))$ and  $\elm
    \cap \partial \Omega = \emptyset$ or
    $v|_{\partial\Omega\cap\elm}=0$. Here
    $\Cipolk[\grid] \colon \VG \to H^2_0(\Omega)$ is the enriching
    operator defined in~\eqref{df:enriching_operator}.
  \end{enumerate}
\end{lem}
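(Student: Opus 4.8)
The four claims are of three kinds. Part~(b) only requires checking that the construction~\eqref{df:interpolation_operator_global} is well defined and lands in $\VG$; part~(a) is the classical Cl\'ement/Scott--Zhang stability argument once the scaling of the dual functionals $\phi^*_z$ is recorded; and parts~(c)--(d) are the two \emph{compatibility} properties tying $\Gipolk[\grid]$ to $\VG$ and to the smoothing operator $\Cipolk[\grid]$. The plan is to treat them in the order (b), (a), (c), (d). Since the paper itself flags these as following ``from standard arguments'', the work is really in verifying the compatibility with the specific dual basis built from the HCT degrees of freedom.

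\textbf{Well-definedness and the algebraic part of (a).} Because each $\phi^*_z$ is a piecewise polynomial with $\supp\phi^*_z\subset\omega_\grid(z)$, in particular $\phi^*_z\in L^\infty(\Omega)$, the nodal values $\scp[L^2(\Omega)]{\phi^*_z}{v}$ are finite for every $v\in L^1(\Omega)$, and by construction $\Gipolk[\grid]v\in\polspace[2](\grid)$ vanishes at every boundary Lagrange node; hence $\Gipolk[\grid]v\in H^1_0(\Omega)\cap\polspace[2](\grid)=\VG$, which is~(b). For the projection property in~(a) I argue node by node: if $v\in\VG$ then $v|_{\elm'}\in\polspace[2](\elm')\subset\cpolspace[4](\elm')$ on each $\elm'$, so $\scp[L^2(\elm')]{\phi^{\elm',*}_z}{v|_{\elm'}}=v|_{\elm'}(z)$, and averaging over $\elm'\in\omega_\grid(z)$ gives $(\Gipolk[\grid]v)(z)=v(z)$ at interior nodes and $(\Gipolk[\grid]v)(z)=0=v(z)$ at boundary nodes; thus $\Gipolk[\grid]|_{\VG}=\id$ and, with~(b), $\Gipolk[\grid]^2=\Gipolk[\grid]$. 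Boundedness on $L^p(\Omega)$, $1\le p\le\infty$, follows from the standard scaling bound $\norm[L^{p'}(\omega_\grid(z))]{\phi^*_z}\Cleq\abs{\omega_\grid(z)}^{-1/p}$ inherited from the local dual basis together with local quasi-uniformity: it gives $\abs{(\Gipolk[\grid]v)(z)}\Cleq\abs{\omega_\grid(z)}^{-1/p}\norm[L^p(\omega_\grid(z))]{v}$, whence, expanding $\Gipolk[\grid]v=\sum_z(\Gipolk[\grid]v)(z)\,\psi_z$ in the Lagrange nodal basis $\{\psi_z\}$, using the scaling of the $\psi_z$, and invoking the finite overlap of the patches $\{\omega_\grid(z)\}$, one obtains $\norm[L^p(\Omega)]{\Gipolk[\grid]v}\Cleq\norm[L^p(\Omega)]{v}$.

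\textbf{The $H^1$-stability in (a).} Fix $\ell\in\N$, $\elm\in\grid$ and $\elm'\in\omega_\grid^\ell(\elm)$. If $\elm'$ does not meet $\partial\Omega$, then $\Gipolk[\grid]$ reproduces constants at every node of $\elm'$ (apply the node-by-node computation above to the constant function), so $\nabla\Gipolk[\grid]v|_{\elm'}=\nabla\Gipolk[\grid](v-c_{\elm'})|_{\elm'}$ with $c_{\elm'}$ the mean of $v$ over $\omega_\grid(\elm')$; an inverse estimate, the nodal bound from the previous paragraph with $p=2$, and the Poincar\'e inequality on the connected patch $\omega_\grid(\elm')$ give $\norm[\elm']{\nabla\Gipolk[\grid]v}\Cleq\norm[\omega_\grid(\elm')]{\nabla v}$. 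If $\elm'$ touches $\partial\Omega$, constants are no longer reproduced, so I take $c_{\elm'}=0$ and replace Poincar\'e by the Friedrichs inequality on $\omega_\grid(\elm')$, which is admissible since $v\in H^1_0(\Omega)$ and $\omega_\grid(\elm')$ carries a boundary face. Summing over $\elm'\in\omega_\grid^\ell(\elm)$, using $\omega_\grid(\elm')\subset\omega_\grid^{\ell+1}(\elm)$ and finite overlap of the first-order patches, yields the claimed bound, exactly as in~\cite{Clement1975:Approximation-b}.

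\textbf{The compatibility properties (c) and (d).} For~(c), if $\elm\cap\partial\Omega=\emptyset$ then every $z\in\nodes[\elm]$ is an interior node, and every $\elm'\in\omega_\grid(z)$ meets $\elm$, so $\elm'\in\neighG(\elm)$ and $\omega_\grid(z)\subset\omega_\grid(\elm)$; since $v$ is continuous and piecewise in $\polspace[2]$ on $\omega_\grid(\elm)$, $\scp[L^2(\elm')]{\phi^{\elm',*}_z}{v|_{\elm'}}=v|_{\elm'}(z)=v(z)$ for each such $\elm'$, hence $(\Gipolk[\grid]v)(z)=v(z)=v|_\elm(z)$; two elements of $\polspace[2](\elm)$ with the same Lagrange data on $\elm$ coincide, so $\Gipolk[\grid]v|_\elm=v|_\elm$. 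For~(d), recall $\Cipolk[\grid]v\in\widetilde\V(\grid)\subset H^2_0(\Omega)$, so the defining property $\scp[L^2(\Omega)]{\phi^*_z}{w}=w(z)$ for $w\in\widetilde\V(\grid)$ gives $(\Gipolk[\grid](\Cipolk[\grid]v))(z)=(\Cipolk[\grid]v)(z)$ at every interior $z\in\nodes[\elm]$. Because $\nodes[\elm]\subset\cnodes[\elm]$ and, by Definition~\ref{df:HTC}, point evaluation at a vertex or edge midpoint of $\elm$ is one of the HCT degrees of freedom $N_z$, we have $(\Cipolk[\grid]v)(z)=N_z(\Cipolk[\grid]v)$, which by~\eqref{df:enriching_operator} is a convex combination over $\elm'\in\omega_\grid(z)$ of $N_z^{\elm'}(v|_{\elm'})=v|_{\elm'}(z)$; using once more $\omega_\grid(z)\subset\omega_\grid(\elm)$ and continuity of $v$, this equals $v(z)$. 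At a boundary node $z\in\nodes[\elm]\cap\partial\Omega$, which can occur only in the case $v|_{\partial\Omega\cap\elm}=0$, both $(\Gipolk[\grid](\Cipolk[\grid]v))(z)$ and $v(z)$ vanish. Hence $\Gipolk[\grid](\Cipolk[\grid]v)$ and $v$ carry the same Lagrange data on $\elm$, i.e.\ $\Gipolk[\grid](\Cipolk[\grid]v)|_\elm=v|_\elm$. The only step needing genuine care is the $H^1$-stability near $\partial\Omega$ in~(a), where the failure of constant-reproduction forces a Friedrichs inequality in place of Poincar\'e; the compatibility claims~(c)--(d) then amount to keeping the bookkeeping straight, so that the Lagrange sub-family of the HCT nodes — on which $\phi^*_z$ acts as point evaluation — is exactly what $\Gipolk[\grid]$ reads off.
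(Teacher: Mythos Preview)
Your proposal is correct and follows precisely the standard Cl\'ement-type route that the paper itself invokes (the paper gives no detailed proof, merely the remark that the construction differs from \cite{Clement1975:Approximation-b} only in the choice of a piecewise polynomial dual basis and that ``the following results'' hold ``from standard arguments''). Your write-up supplies exactly those arguments in detail, including the key compatibility check that point evaluation at Lagrange nodes belongs to $\cDOFs$ and hence $\scp[L^2(\Omega)]{\phi^*_z}{w}=w(z)$ for $w\in\widetilde\V(\grid)$, which is what drives both~(c) and~(d).
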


We remark that, in principle, one
can also resort to a Scott-Zhang-type quasi interpolation
\cite{ScottZhang:90}. However, this complicates the construction of
$\Gipolk[\grid]$, since a dual basis, bi-orthogonal to the nodal basis of traces of functions in
$\cpolspace(\elm)$, needs to be constructed on faces of  boundary elements
. The price we have to pay for the simpler
construction is that the set of integration needs to be slightly increased in the right hand side of the
following stability estimate. We are
particularly interested in the interplay of different refinement
levels related to the sequence $\{\gridk\}_{k\in\N_0}$ of meshes
produced by the \ACIPGM. To simplify notation, we  again replace
subscripts $\gridk$ by $k$, e.g. we write $\Gipolk$ instead of $\Gipolk[\gridk]$.

\begin{lem}[Stability of $\Gipolk $]
\label{lem:stability_interpolation}
  Let $v \in \V_\ell $ for some $\ell \in \N_0 \cup
  \set{\infty}$. Then, for all $\elm  \in \gridk$, $k\le \ell$, we have 
  \begin{align*}
    \int_\elm \abs{D^2 \Gipolk v}^2 \dx&+ \int_{\partial \elm}
    \hG[k]^{-1} \jumpn{}{ \Gipolk v}^2 \ds \\
&\Cleq \int_{\omega^3_k(\elm) } \abs{\DG^2 v}^2 \dx+
  \sum_{\substack{ \side \in \sidesk[\ell] \\  \side \subset
 \omega^3_{k}(\elm) }} 
\int_{  \side}  \hG[\ell]^{-1}  \jumpn{}{v}^2 \ds,
  \end{align*}
where 
 $\sides_\ell:=\sides^+$ and $\hG[\ell]:=\hG[+]$, when $\ell
=\infty$. In particular, we have $\enorm[k]{\Gipolk v} \Cleq
\enorm[\ell]{v}$. 

Moreover, for $w\in H_0^2(\Omega)$, we have $\enorm[k]{\Gipolk w} \Cleq \norm[\Omega]{D^2
  w}$.
\end{lem}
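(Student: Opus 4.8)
The plan is to prove the estimate elementwise on $\elm\in\gridk$ and then sum, exploiting that the patches $\{\omega^3_k(\elm)\}_{\elm\in\gridk}$ have uniformly bounded overlap. Fix $\elm\in\gridk$ and distinguish the \emph{interior case}, $\omega^2_k(\elm)\cap\partial\Omega=\emptyset$, from the \emph{boundary case}, where $\omega^2_k(\elm)$ contains a face of $\partial\Omega$. Since $\Gipolk v|_\elm\in\polspace[2](\elm)$, an inverse estimate together with the local $L^2$-stability of the Cl{\'e}ment-type operator (Lemma~\ref{lem:properties_interpolation_operator}\ref{it:stability_interpolation}) gives, in both cases,
\begin{align*}
  \norm[\elm]{D^2\Gipolk v}&=\norm[\elm]{D^2\Gipolk(v-Q)}\Cleq\hG[\elm]^{-2}\norm[\elm]{\Gipolk(v-Q)}\\
  &\Cleq\hG[\elm]^{-2}\norm[\omega_k(\elm)]{v-Q},
\end{align*}
where $Q$ is affine: in the boundary case $Q=0$, while in the interior case $Q\in\polspace[1](\omega^2_k(\elm))$ is the affine polynomial furnished by Lemma~\ref{lem:poincare_local_limit} (if $\ell=\infty$) or by Lemma~\ref{lem:Poincare_estimate_local} with $\grid=\gridk$, $\grid_*=\gridl$ (if $\ell\in\N_0$), which we may normalise so that $\int_{\omega^2_k(\elm)}(v-Q)\dx=0$; the first equality then uses $\Gipolk Q=Q$ on $\elm$ (Lemma~\ref{lem:properties_interpolation_operator}\ref{it:interpol_preserves_polynomials_locally}) and $D^2Q=0$. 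Next, $\norm[\omega_k(\elm)]{v-Q}\le\norm[\omega^2_k(\elm)]{v-Q}$ is bounded by $\hG[\elm]\,\norm[\omega^2_k(\elm)]{\nabla(v-Q)}$ via the \Poincare-Wirtinger inequality in the interior case, and, in the boundary case, by $\hG[\elm]\,\norm[\omega^2_k(\elm)]{\nabla v}$ via a \Poincare-Friedrichs inequality, using that $v\in\V_\ell\subset H^1_0(\Omega)$ vanishes on $\partial\Omega$ and that $\omega^2_k(\elm)$ carries a boundary face. Applying the (Friedrichs variant of the) cited lemma to $\norm[\omega^2_k(\elm)]{\nabla(v-Q)}^2$ and absorbing the powers of $\hG[\elm]\Ceq\hG[k]$ by local quasi-uniformity, I arrive in both cases at
\begin{align*}
  \norm[\elm]{D^2\Gipolk v}^2\Cleq\int_{\omega^2_k(\elm)}\abs{\DG^2v}^2\dx+\sum_{\substack{\side\in\sidesk[\ell]\\\side\subset\omega^2_k(\elm)}}\int_\side\hG[\ell]^{-1}\jumpn{}{v}^2\ds,
\end{align*}
with the conventions $\sidesk[\ell]:=\sides^+$, $\hG[\ell]:=\hG[+]$ when $\ell=\infty$.

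For the interface contribution I would use that $\Gipolk v\in\V_k$ is continuous, so that $\jumpn{}{Q}=0$ on interior faces for affine $Q$, hence $\jumpn{}{\Gipolk v}=\jumpn{}{\Gipolk v-Q}$ on each $\side\subset\partial\elm$ (with $Q=0$ on boundary faces). As $\nabla(\Gipolk v-Q)$ is piecewise polynomial, a scaled trace inequality for polynomials, followed by the chain of inverse, stability, and \Poincare\ (respectively Friedrichs) estimates from the first paragraph applied on $\elm$ and on each $\elm'\in\neighk(\elm)$, gives
\begin{align*}
  \int_\side\hG[k]^{-1}\jumpn{}{\Gipolk v}^2\ds&\Cleq\hG[\elm]^{-2}\,\norm[\elm\cup\elm']{\nabla(\Gipolk v-Q)}^2\\
  &\Cleq\int_{\omega^3_k(\elm)}\abs{\DG^2v}^2\dx+\sum_{\substack{\side'\in\sidesk[\ell]\\\side'\subset\omega^3_k(\elm)}}\int_{\side'}\hG[\ell]^{-1}\jumpn{}{v}^2\ds.
\end{align*}
The enlargement of the patch from the second-order neighbourhoods of $\elm$ and of its neighbours up to $\omega^3_k(\elm)$ is exactly what produces the third neighbourhood in the statement.

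Summing the two elementwise bounds over $\elm\in\gridk$, collecting each interior face from its two adjacent elements, and invoking the uniformly bounded overlap of $\{\omega^3_k(\elm)\}_{\elm\in\gridk}$, I obtain
\begin{align*}
  \enorm[k]{\Gipolk v}^2\Cleq\int_\Omega\abs{\DG^2v}^2\dx+\sigma\int_{\sidesk[\ell]}\hG[\ell]^{-1}\jumpn{}{v}^2\ds=\enorm[\ell]{v}^2.
\end{align*}
Finally, for $w\in H^2_0(\Omega)$ the jump $\jumpn{}{w}$ vanishes across every face of every mesh, so all penalty contributions disappear, and the argument above carries over verbatim with Lemmas~\ref{lem:Poincare_estimate_local} and~\ref{lem:poincare_local_limit} replaced by the classical \Poincare\ and Friedrichs inequalities for $H^2$-functions on $\omega^2_k(\elm)$; this yields $\enorm[k]{\Gipolk w}\Cleq\norm[\Omega]{D^2w}$.

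I expect the difficulties to be organisational rather than conceptual. Keeping all constants independent of the level $\ell\in\N_0\cup\{\infty\}$ is precisely what makes it necessary to have both the finite-level estimate (Lemma~\ref{lem:Poincare_estimate_local}) and its limit-space counterpart (Lemma~\ref{lem:poincare_local_limit}) at hand, and the elements whose patches touch $\partial\Omega$ have to be handled separately, exploiting the homogeneous boundary data of $v$ and the vanishing of the boundary degrees of freedom of $\Gipolk$ in order to replace \Poincare\ by Friedrichs. The one genuinely non-routine point is the interface term: since $\Gipolk v\notin H^2(\Omega)$ there is no trace of its second derivatives, so one must pass to the jump of $\nabla\Gipolk v$, subtract a common affine polynomial, and combine trace and inverse estimates to fall back on the first-order bounds already obtained.
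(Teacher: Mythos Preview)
Your approach is essentially the paper's: subtract an affine polynomial, use the invariance of $\Gipolk$ on linears together with inverse estimates and the stability of $\Gipolk$, and close with the broken \Poincare/Friedrichs Lemmas~\ref{lem:Poincare_estimate_local} and~\ref{lem:poincare_local_limit}. Two technical points are worth tightening.

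First, the stability you invoke is local $L^2$-stability, but Lemma~\ref{lem:properties_interpolation_operator}\ref{it:stability_interpolation} states local $H^1$-seminorm stability (and only global $L^p$-boundedness). The paper exploits exactly this: after one inverse estimate it passes from $\norm[\elm]{D^2\Gipolk(v-Q)}$ directly to $\hG[\elm]^{-1}\norm[\elm]{\nabla\Gipolk(v-Q)}$ and then to $\hG[\elm]^{-1}\norm[\omega_k(\elm)]{\nabla(v-Q)}$ via the $H^1$-stability, landing immediately on the quantity controlled by Lemma~\ref{lem:Poincare_estimate_local}. Your detour through $L^2$ plus an additional \Poincare--Wirtinger step works (local $L^2$-stability is standard for Cl\'ement operators), but it is not what the referenced lemma provides, and it makes the jump-term argument more involved than necessary: with the $H^1$-stability one bounds $\int_{\partial\elm}\hG[k]^{-1}\jumpn{}{\Gipolk(v-Q)}^2$ by $\hG[\elm]^{-2}\norm[\omega_k(\elm)]{\nabla\Gipolk(v-Q)}^2\Cleq\hG[\elm]^{-2}\norm[\omega_k^2(\elm)]{\nabla(v-Q)}^2$ in one shot, without treating each neighbour separately.

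Second, your case distinction is not exhaustive: when $\omega_k^2(\elm)\cap\partial\Omega$ is a single vertex, neither the interior hypothesis nor the assumption that $\omega_k^2(\elm)$ ``carries a boundary face'' holds. The paper resolves this by observing that whenever $\omega_k^2(\elm)\cap\partial\Omega\neq\emptyset$ there exists a side $\side\in\sidesk$ with $\side\subset\omega_k^3(\elm)\cap\partial\Omega$, and applies the Friedrichs variant~\eqref{eq:Friedrichs} on $\omega_k^3(\elm)$. This, and not the jump argument (where $\omega_k^2(\elm)$ already suffices in the interior case), is the actual source of the third neighbourhood in the statement.
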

\begin{proof}
  %
  Let $\ell < \infty$ and assume that $\elm \in \gridk$ such
that ${\omega_k^2(\elm)}
\cap \partial \Omega= \emptyset$. Let  $Q$ be the linear
polynomial from Lemma~\ref{lem:Poincare_estimate_local}
with  $\grid=\gridk$, $\grid^\star=\grid_\ell$, and
$D_\elm=\omega_k(\elm)$. Then Lemma~\ref{lem:properties_interpolation_operator}\ref{it:stability_interpolation} and
\ref{it:interpol_preserves_polynomials_locally} yield
\begin{align*}
  \int_\elm \abs{D^2 \Gipolk v}^2 \dx&=\int_\elm \abs{D^2 \Gipolk
                                       (v-Q)}^2 \dx 
\Cleq \int_\elm \hG[k]^{-2}\abs{\nabla \Gipolk
                                       (v-Q)}^2 \dx
  \\
&\Cleq  \int_{\omega_k(\elm)} \hG[k]^{-2}\abs{\nabla 
                                                         (v-Q)}^2 \dx
  \\
&\Cleq  \int_{\omega_k(\elm)}
   \abs{\DG^2 v}^2  \dx+  
  \sum_{\substack{ \side \in \sidesk[\ell] \\  \side \subset
  \omega_k(\elm)}} 
\int_{  \side}  \hG[\ell]^{-1}  \jumpn{}{v}^2 \ds.
\end{align*}

In order to bound the jump terms, let $Q$ be the linear polynomial from
Lemma~\ref{lem:Poincare_estimate_local} with  $\grid=\gridk$,
$\V(\grid^\star)=\V_\ell$, and $D_\elm=\omega^2_k(\elm)$. We observe
that $\nabla Q\equiv 
const$ and hence does not jump across interelement
boundaries. Consequently, using  Lemma~\ref{lem:properties_interpolation_operator}\ref{it:stability_interpolation} and
\ref{it:interpol_preserves_polynomials_locally}, together with
a scaled trace theorem and  inverse estimates, we obtain
\begin{align*}
  \int_{\partial \elm} \hG[k]^{-1}\jumpn{}{\Gipolk v}^2 \ds&=
  \int_{\partial
    \elm} \hG[k]^{-1}
  \jumpn{}{\Gipolk( v-Q)}^2 \ds
  \\
  & \Cleq 
  \int_{\omega_k(\elm)} \hG[k]^{-2}\abs{\nabla \Gipolk( v-Q)}^2 \dx
  \Cleq  \hG[\elm]^{-2} \int_{\omega^2_k(\elm)} \abs{ \nabla(v-Q)}^2 \dx
  \\
  &\Cleq  \int_{\omega^2_k(\elm)}
  \abs{\DG^2 v}^2  \dx+  
  \sum_{\substack{ \side \in \sidesk[\ell] \\  \side \subset
  \omega^2_k(\elm)}} 
\int_{  \side}  \hG[\ell]^{-1}  \jumpn{}{v}^2 \ds,
\end{align*}
where we also used $\bigcup\set{\omega_k(\side) \colon \side
  \subset \partial \elm} \subset \omega_k(\elm)$ and
Lemma~\ref{lem:Poincare_estimate_local}\eqref{eq:Poincare}.

If ${\omega_k^2(\elm)} \cap \partial \Omega \not = \emptyset$, then
there exists a side $\side \in\sidesk[k]$ with $\side \subset{\omega_k^3(\elm)} \cap \partial \Omega$. Now applying 
\eqref{eq:Friedrichs} instead of~\eqref{eq:Poincare} the desired
assertion follows similarly as above.

For $\ell =\infty$ we replace Lemma~\ref{lem:Poincare_estimate_local} by Lemma~\ref{lem:poincare_local_limit} and proceed as before.

For $w\in H_0^2(\Omega)$ the estimate follows by analogous arguments
replacing Lemma~\ref{lem:Poincare_estimate_local} by the classical
Poincar\'{e}-Friedrichs inequality for functions in $H^2_0(\Omega)$ 
together with scaling arguments.
\end{proof}

\begin{cor}
\label{cor:interpolation_estimate}
  Let $v \in \V_\ell$, $\ell \in \N_0
  \cup \set{\infty}$ and $\elm \in \gridk$ for some $k\le\ell$. Then
\begin{align*}
    \int_\elm \abs{D^2 \Gipolk v -\DG^2 v }^2 \dx&+ \int_{\partial \elm}
    \hG[k]^{-1} \jumpn{}{(\Gipolk v-v)}^2 \ds \\
&\Cleq \int_{\omega^3_k(\elm)} \abs{\DG^2 v}^2 \dx+
 \sum_{\substack{ \side \in \sidesk[\ell] \\  \side \subset
  \omega^3_{k}(\elm)}} 
\int_{  \side}  \hG[\ell]^{-1}  \jumpn{}{v}^2 \ds,
  \end{align*}
where we write $\sidesk[\ell]\definedas\sides^+$ and $\hG[\ell]\definedas\hG[+]$ if $\ell
=\infty$ as in Lemma~\ref{lem:stability_interpolation}.
\end{cor}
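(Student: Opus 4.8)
The plan is to obtain Corollary~\ref{cor:interpolation_estimate} from Lemma~\ref{lem:stability_interpolation} by a plain triangle inequality, the only additional work being to absorb the ``missing'' contributions $\DG^2 v$ and $\jumpn{}{v}$ into the stated right-hand side.

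For the volume term I would estimate $\norm[\elm]{D^2\Gipolk v-\DG^2 v}^2\le 2\norm[\elm]{D^2\Gipolk v}^2+2\norm[\elm]{\DG^2 v}^2$, bound the first summand directly by Lemma~\ref{lem:stability_interpolation} (whose right-hand side is exactly the one claimed here), and observe that the second summand is trivially at most $\int_{\omega^3_k(\elm)}\abs{\DG^2 v}^2\dx$ since $\elm\subset\omega^3_k(\elm)$. (One could instead subtract the linear polynomial $Q$ from Lemma~\ref{lem:Poincare_estimate_local}, resp.\ Lemma~\ref{lem:poincare_local_limit}, use $D^2 Q\equiv 0$ and $\Gipolk Q=Q$ on interior elements, and re-run the inverse-estimate/$H^1$-stability/Poincar\'e chain from the proof of Lemma~\ref{lem:stability_interpolation}; this is marginally sharper but needs a separate boundary treatment that Lemma~\ref{lem:stability_interpolation} already contains, so it is not worth it here.)

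For the jump term I would similarly write $\int_{\partial\elm}\hG[k]^{-1}\jumpn{}{(\Gipolk v-v)}^2\ds\le 2\int_{\partial\elm}\hG[k]^{-1}\jumpn{}{\Gipolk v}^2\ds+2\int_{\partial\elm}\hG[k]^{-1}\jumpn{}{v}^2\ds$, control the first integral by Lemma~\ref{lem:stability_interpolation}, and for the second use that every face of $\gridk$ inside $\partial\elm$ is a union of faces of $\grid_\ell$ and that refinement gives $\hG[k]|_\side\ge\hG[\ell]|_{\side'}$ whenever $\sidesk[\ell]\ni\side'\subset\side\subset\partial\elm$; hence $\int_{\partial\elm}\hG[k]^{-1}\jumpn{}{v}^2\ds\le\sum_{\side\in\sidesk[\ell],\,\side\subset\partial\elm}\int_\side\hG[\ell]^{-1}\jumpn{}{v}^2\ds$, and since $\partial\elm\subset\omega_k(\elm)\subset\omega^3_k(\elm)$ this is dominated by the right-hand side of the corollary. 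Collecting the two estimates and using $\hG[\elm]\Ceq\hG[k]$ on $\omega^3_k(\elm)$ by shape regularity proves the claim for every finite $\ell$.

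The case $\ell=\infty$ is the only point that deserves a second look, and I would handle it exactly as in the $\ell=\infty$ parts of Lemma~\ref{lem:stability_interpolation} and Lemma~\ref{lem:poincare_local_limit}: with $\sidesk[\ell]$, $\hG[\ell]$ replaced by $\sides^+$, $\hG[+]$, the bound on the first (purely discrete) integral above goes through verbatim, while for $\int_{\partial\elm}\hG[k]^{-1}\jumpn{}{v}^2\ds$ --- where now $\partial\elm$ need no longer be covered by faces of $\grid^+$ --- I would pick an approximating sequence $v_m\in\V_m$ with $\enorm[m]{v-v_m}\to0$, apply the finite-$\ell$ estimate to $v_m$, and pass to the limit using the monotone convergence of the jump contributions from Proposition~\ref{prop:boundedness_of_energy_norm}. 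I expect this limiting step, together with the bookkeeping of which faces of $\grid_\ell$ (or $\grid^+$) lie in $\partial\elm$ and in $\omega^3_k(\elm)$, to be the only mildly delicate part; everything else is just the triangle inequality and Lemma~\ref{lem:stability_interpolation}.
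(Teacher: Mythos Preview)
Your proposal is correct and is precisely the argument the paper has in mind: the corollary is stated without proof immediately after Lemma~\ref{lem:stability_interpolation}, and the intended derivation is exactly the triangle inequality you describe, bounding $\norm[\elm]{D^2\Gipolk v}^2$ and $\int_{\partial\elm}\hG[k]^{-1}\jumpn{}{\Gipolk v}^2\ds$ by the lemma and absorbing $\norm[\elm]{\DG^2 v}^2$ and $\int_{\partial\elm}\hG[k]^{-1}\jumpn{}{v}^2\ds$ into the right-hand side via $\elm\subset\omega_k^3(\elm)$ and $\hG[k]^{-1}\le\hG[\ell]^{-1}$ on refined faces. Your handling of the $\ell=\infty$ case via an approximating sequence and Proposition~\ref{prop:boundedness_of_energy_norm} is sound, though one can also argue directly that for $v\in\V_\infty$ the jump $\jumpn{}{v}$ vanishes on $\partial\elm\setminus\Gamma^+$ (since $v$ is $H^2$ on $\Omega^-$), so only faces $\side'\in\sides^+$ with $\side'\subset\partial\elm$ contribute.
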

The next corollary states the convergence of the interpolation.

\begin{cor}
\label{cor:convergence_interpolation}
  Let $v \in \V_\infty$, then
  $ \enorm[k]{\Gipolk v-v} \to 0$ as $k \to \infty$.
\end{cor}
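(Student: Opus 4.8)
\emph{Plan.} The plan is to reduce the claim to the local interpolation bound of Corollary~\ref{cor:interpolation_estimate}. Summing that estimate (with $\ell=\infty$, so that $\sidesk[\ell]=\sides^+$ and $h_\ell=h_+$) over all $\elm\in\gridk$ and using the finite overlap of the patches $\omega_k^3(\elm)$ gives
\[
  \enorm[k]{\Gipolk v-v}^2\Cleq\sum_{\elm\in\gridk}\int_{\omega_k^3(\elm)}\abs{\DG^2 v}^2\dx
  +\sum_{\elm\in\gridk}\ \sum_{\substack{\side\in\sides^+\\ \side\subset\omega_k^3(\elm)}}\int_\side h_+^{-1}\jumpn{}{v}^2\ds ,
\]
so it suffices to show that both sums tend to $0$. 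I would analyse the right-hand side over three disjoint groups of elements: those on which $\gridk$ has already stabilised, those deep inside $\Omega^-$, and the transition layer in between, since the reason for decay is different in each.

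\emph{Stationary part.} If $\elm\in\gridk^{2+}$, then $\omega_k(\elm')\subset\Omega^+$ for every $\elm'\in\neighk(\elm)$, where $v$ is a continuous piecewise quadratic vanishing on $\partial\Omega$; hence $\Gipolk v$ reproduces $v$ on a neighbourhood of $\elm$ by Lemma~\ref{lem:properties_interpolation_operator}\ref{it:interpol_preserves_polynomials_locally} together with the boundary case of \ref{it:interpol_is_left_inverse_of_enriching}, so that $\DG^2\Gipolk v=\DG^2 v$ on $\elm$, $\jumpn{}{\Gipolk v-v}=0$ on $\partial\elm$, and such $\elm$ contribute nothing. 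By Lemma~\ref{lem:neighbourhood_G+} every $\elm\in\grid^+$ eventually belongs to $\gridk^{2+}$, so $\gridk^{2-}:=\gridk\setminus\gridk^{2+}$ eventually consists only of elements in an arbitrarily small neighbourhood of $\Omega^-$, and the two sums above may be restricted to $\elm\in\gridk^{2-}$.

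\emph{Interior of $\Omega^-$.} For $\elm\in\gridk^{2-}$ whose patch lies in $\Omega^-$ the $\sides^+$--jump sum is empty, and since $v|_{\Omega^-}$ is the restriction of some $\tilde v\in H_0^2(\Omega)$ we have $\Gipolk v=\Gipolk\tilde v$ on $\elm$. Moreover such $\elm$ lie in $\gridk^{1-}$, where $\norm[L^\infty(\Omega)]{h_k\chi_{\Omega_k^{1-}}}\to0$ by Lemma~\ref{lem:Omega_star_and_h_vanishing}, i.e. the mesh-size is uniformly small there. I would then run a standard density argument: choose $\hat v\in C_0^\infty(\Omega)$ with $\norm[\Omega]{D^2(\tilde v-\hat v)}\le\epsilon$, bound the contribution of $\hat v$ by the usual $\Landau(\norm[L^\infty(\Omega)]{h_k\chi_{\Omega_k^{1-}}})$ interpolation estimate, and control the remainder by the $H_0^2$--stability of $\Gipolk$ from Lemma~\ref{lem:stability_interpolation} together with $\enorm[k]{w}=\norm[\Omega]{D^2 w}$ for $w\in H_0^2(\Omega)$; letting $k\to\infty$ and then $\epsilon\to0$ makes this contribution vanish.

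\emph{Transition layer (the main obstacle).} What remains, and is the genuinely delicate point, is the set $\J_k$ of $\elm\in\gridk^{2-}$ whose patch meets $\Omega^+$: there neither the exact reproduction (valid well inside $\Omega^+$) nor the smoothing estimate (valid where $h_k$ is small) applies directly, and Corollary~\ref{cor:interpolation_estimate} by itself contributes a fixed, non-vanishing piece of $\int\abs{\DG^2 v}^2$ along $\partial\Omega^+\cap\partial\Omega^-$. I would split $\J_k$ further. For $\elm\in\J_k\cap\grid^+$ I would use that each nodal functional of $\Gipolk$ is supported on a one-vertex patch and converges to the point evaluation of the continuous function $v$ as the diameters of the $\Omega^-$--elements entering that patch tend to $0$ (Lemma~\ref{lem:Omega_star_and_h_vanishing}); hence $\Gipolk v|_\elm\to v|_\elm$ in the finite-dimensional space $\polspace[2](\elm)$, and since there are uniformly finitely many such $\elm$, a dominated-convergence argument with the summable majorant from Corollary~\ref{cor:interpolation_estimate} removes their contribution. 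For $\elm\in\J_k\setminus\grid^+$ the patches are contained in $\Omega_k^{5-}$, and $\abs{\Omega_k^{5-}\setminus\Omega^-}\to0$ by Lemma~\ref{lem:Omega_star_and_h_vanishing}, so by uniform integrability of $\DG^2 v$ and the fact that these patches eventually meet only an arbitrarily thin $\grid^+$--collar of the interface their contribution to the first sum vanishes too. Finally, every $\sides^+$--jump term surviving for $\elm\in\gridk^{2-}$ lies in the tail $\sum_{\side\in\sides^+,\ \side\subset\Omega_k^{5-}}\int_\side h_+^{-1}\jumpn{}{v}^2\ds$ of the convergent series $\sum_{\side\in\sides^+}\int_\side h_+^{-1}\jumpn{}{v}^2\ds\le\sigma^{-1}\enorm[\infty]{v}^2$ (Proposition~\ref{prop:boundedness_of_energy_norm}), indexed by a family of faces decreasing to $\emptyset$ (Lemma~\ref{lem:neighbourhood_G+}), and hence vanishes. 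Collecting all pieces yields $\enorm[k]{\Gipolk v-v}\to0$; I expect the bookkeeping near the collar --- separating the finitely many $\grid^+$--elements of fixed size from the elements whose mesh-size already decays --- to be the only real difficulty.
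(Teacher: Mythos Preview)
The paper's proof is two sentences: by \emph{definition} of $\V_\infty$ there already exists a sequence $v_k\in\V_k$ with $\enorm[k]{v-v_k}\to0$; since $\Gipolk$ is a projection onto $\V_k$ (Lemma~\ref{lem:properties_interpolation_operator}\ref{it:stability_interpolation}, so $\Gipolk v_k=v_k$) and is stable in the energy norm (Lemma~\ref{lem:stability_interpolation}), the triangle inequality gives
\[
  \enorm[k]{\Gipolk v-v}\le\enorm[k]{\Gipolk(v-v_k)}+\enorm[k]{v-v_k}\Cleq\enorm[k]{v-v_k}\to0.
\]
You overlooked that an approximating sequence is \emph{built into the hypothesis} $v\in\V_\infty$; nothing has to be constructed.

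Your route can be made to work, but it essentially reproduces step~\boxed{4} of the proof of Lemma~\ref{lem:limit_space_is_Hilbert}, where exactly this region decomposition (stationary part $\gridk^{2+}$, interior of $\Omega^-$ via density of $H_0^3$ in $H_0^2$, transition collar) is genuinely needed because there one is \emph{constructing} the approximating sequence from scratch in order to verify membership in $\V_\infty$. Here that work is already done and is encoded in the hypothesis, so repeating it is unnecessary. Note also that your opening claim ``it suffices to show that both sums tend to $0$'' is false as written: the first sum dominates $\int_\Omega|\DG^2 v|^2$, which is a fixed positive number. You implicitly repair this by switching, in the stationary part, from bounding the right-hand side to observing that the \emph{left-hand side} vanishes on $\gridk^{2+}$ and then restricting the estimate to $\gridk^{2-}$; the plan should say so explicitly.
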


\begin{proof}
  Thanks to the definition of $ \V_\infty$ there exist a sequence  $\set{v_k}_{k
  \in \N_0}$, $v_k \in \V_k$ with $\enorm[k]{v_k - v} \to 0$ as $ k \to
\infty$. Consequently, the claim follows from stability and invariance of the interpolation
operator. 
 \end{proof}

\subsection{Proof of Lemma \ref{lem:limit_space_is_Hilbert}}
\label{sec:limit_space_is_Hilbert}
From the definition of the space $\V_\infty$ it is clear that we only need to check for completeness in order to conclude the assertion.

Let $\{v^\ell\}_{\ell\in\N_0}$ be a Cauchy sequence in
$(\V_\infty,\enorm[\infty]{\cdot})$. Note that, the broken
\Poincare-Friedrichs inequality
(Proposition~\ref{prop:Friedrich_type_estimate_global}) is inherited
to $\V_\infty$, and thus  we have 
$v^\ell\to v \in  H^1_0(\Omega)$ as $\ell\to\infty$ and it remains to
prove that $v\in \V_\infty$. 
Using norm equivalence on finite dimensional spaces,
we readily conclude that $v|_\elm \in \polspace[2](\elm)$ for all $\elm \in
\grid^+$. Moreover, thanks to 
Propositions~\ref{prop:total_variation_of_gradient_bounded_by_energy}
and~\ref{prop:boundedness_of_energy_norm}, we have that 
$\{\nabla v^\ell\}_{\ell\in\N_0}$ is also a Cauchy sequence in
$BV(\Omega)^2$, i.e. $\nabla v\in BV(\Omega)^2$. Consequently, 
$\nabla v\in BV(\Omega)^2$ has an $L^1$ trace on sides 
$\side \in \sidesk[k]$, $k \in \N_0$; see e.g.~\cite[Theorem
3.88]{Ambrosio:2000}.

\boxed{1} We shall first deal with the jumps of the normal
derivatives. To this end, we first observe that for $k\in\N_0$,
$\{v^\ell\}_{\ell\in\N_0}$ is also a Cauchy sequence with respect to
the $\enorm[k]{\cdot}$-norm
(Proposition~\ref{prop:boundedness_of_energy_norm}) and thus
uniqueness of limits imply on $\Gamma_k=\Gamma(\gridk)$ that 
$ \nabla v^\ell\to  \nabla v$ in $L^2(\Gamma_k)$ as
$\ell\to\infty$ in the sense of traces. Moreover, we have that
$\int_{\sides_k}h_k^{-1}\jumpn{}{v}^2\ds$ is uniformly bounded.   
Let $\epsilon>0$ arbitrary fixed, then there exists $L=L(\epsilon)$, such that
$\enorm[k]{v^\ell-v^j}\le\enorm[\infty]{v^\ell-v^j}\le \epsilon$ for all $j, \ell\ge
L$. Thanks to
Proposition~\ref{prop:boundedness_of_energy_norm}, there exists
$K=K(\epsilon,L)$ such that for all $m\ge k\ge K$, we have
\begin{align}\label{k-jump<eps}
  \int_{\sides_m\setminus\sides_k^+}h_+^{-1} \jumpn{}{v^L}^2\ds\le   \epsilon^2.
\end{align}
In particular, for $m=k\ge K$, we have
\begin{align*}
    \int_{\sidesk} \hG[k]^{-1} \jumpn{}{v}^2 \ds &=
    \int_{\sidesk\setminus\sidesk^+} \hG[k]^{-1} \jumpn{}{v}^2
    \ds+\int_{\sidesk^+} \hG[k]^{-1} \jumpn{}{v}^2 \ds
    \\
    &=\lim_{\ell\to\infty}\int_{\sidesk\setminus\sidesk^+} \hG[k]^{-1}
    \jumpn{}{v^\ell}^2 \ds+\int_{\sidesk^+} \hG[k]^{-1} \jumpn{}{v}^2
    \ds
\end{align*}
and 
\begin{align*}
  \int_{\sidesk\setminus\sidesk^+} \hG[k]^{-1}
    \jumpn{}{v^\ell}^2 \ds&\le 
    2\enorm[k]{v^\ell-v^L}^2+2\int_{\sidesk\setminus\sidesk^+}
    \hG[k]^{-1} \jumpn{}{v^L}^2 \ds
      \le 4\epsilon^2
\end{align*}
provided $\ell\ge L$,
which together with Proposition~\ref{prop:boundedness_of_energy_norm}
leads to
\begin{align}\label{conv:jumps_CS}
  \int_{\sidesk} \hG[k]^{-1}
  \jumpn{}{v}^2 \ds \to
  \int_{\sides^+} \hG[+]^{-1}
  \jumpn{}{v}^2 \ds \qquad\text{as}~ k\to\infty,
\end{align}
since $\epsilon>0$ was arbitrary.

  \boxed{2}  In order to prove  $v|_{\Omega^-} \in
  H^2_{\partial \Omega \cap \partial \Omega^-}(\Omega^-)$ we need to
  show that $v$ is a restriction of a 
  $H_0^2(\Omega)$-function. To this end, thanks to Corollary~\ref{cor:convergence_interpolation},
  there exists $\{m_\ell\}_{\ell\in\N_0}\subset\N_0$
  such that $\enorm[m_\ell]{v^\ell-v^\ell_{m_\ell}}\leq \frac 1 \ell $
  for $v_{m_\ell}\definedas\Gipolk[m_\ell] v^\ell \in
  \V_{m_\ell}$, where $\Gipolk[m_\ell] v^\ell $ is the
    interpolant from \eqref{df:interpolation_operator_global}
    with respect to $\grid_{m_\ell}$.
  Consequently, since $\{v^\ell\}$ is a Cauchy sequence and thus
  bounded, we infer from
  Proposition~\ref{prop:boundedness_of_energy_norm} that
  \begin{align*}
     \enorm[m_\ell]{v^\ell_{m_\ell}} &\leq   \enorm[m_\ell]{v^\ell_{m_\ell}-v^\ell}
                                      + \enorm[\infty]{v^\ell}
                                     \leq \frac{1}{\ell} +\enorm[\infty]{v^\ell} ,  
  \end{align*}
  i.e., the uniform boundedness of $
  \enorm[m_\ell]{v^\ell_{m_\ell}}$. 
 We now apply the smoothing
  operator defined in~\eqref{df:enriching_operator} to 
  $v^\ell_{m_\ell}\in \V_{m_\ell}$ together with Lemma~\ref{lem:stability_enriching} ($\alpha=2$) and obtain  
\begin{align*}
  \norm[\Omega]{ D^2 \Cipolk[m_\ell]( v^\ell_{m_\ell})} \Cleq
  \norm[\Omega]{\DG^2 (\Cipolk[m_\ell] 
  (v^\ell_{m_\ell}) -v^\ell_{m_\ell})} +\norm[\Omega] {\DG^2 v^\ell_{m_\ell}
 }  \Cleq  \enorm[m_\ell]{v^\ell_{m_\ell}}.
\end{align*}
Hence, there exists $w
\in H^2_0(\Omega)$ such that, for a not relabelled subsequence
\begin{align} \label{conv:enrich_FEM_function}
  \Cipolk[m_\ell] (v^\ell_{m_\ell}) \wconv w \quad \text{weakly in }H^2_0(\Omega),\qquad\text{as}~\ell\to\infty.
\end{align}
Again from Lemma~\ref{lem:stability_enriching} (for $\alpha=0$) and the scaled trace theorem, we have that 
\begin{align}\label{eq:Eml->vml}
  \begin{aligned}
    \norm[\Omega^{-}_{m_\ell}]{\Cipolk[m_\ell] (v^\ell_{m_\ell})
      -v^\ell_{m_\ell}}^2 &\Cleq \int_{{\sidesk[m_\ell] ^{2-}}}
    \hG[m_\ell]^3\jumpn{}{ v^\ell_{m_\ell}}^2 \ds
    \\
    &\Cleq
    \norm[L^\infty(\Omega)]{h_{m_\ell}\chi_{\Omega^{2-}_{m_\ell}}}^4
    \enorm[m_\ell]{v^\ell_{m_\ell}}^2,
  \end{aligned}
\end{align}
where we used $\norm[L^\infty({\sidesk[m_\ell] ^{2-}})]{h_{m_\ell}} \Cleq
\big\|{h_{m_\ell}\chi_{\Omega^{2-}_{m_\ell}}}\big\|_{L^\infty(\Omega)}$. Applying
Lemma~\ref{lem:Omega_star_and_h_vanishing}, the last term vanishes as
$\ell \to \infty$. Thanks to a
Poincar\'e-Friedrichs' inequality and
Proposition~\ref{prop:total_variation_of_gradient_bounded_by_energy},
we have 
\begin{align*}
  \norm[\Omega]{v - v^\ell_{m_\ell}} \Cleq \snorm[H^1_0(\Omega)]{v -
  v^\ell_{m_\ell}} \Cleq \snorm[H^1_0(\Omega)]{v-v^\ell} + 
  \enorm[m_\ell]{v^\ell - v^\ell_{m_\ell}} \to 0 \qquad \text{as}~\ell \to \infty.
 \end{align*}
and thus $v|_{\Omega^-}=w|_{\Omega^-}$ from $\Omega^-
\subset \Omega^{2-}_{m_\ell}$, i.e., $v|_{\Omega^-} \in
H^2_{\partial \Omega \cap \partial \Omega^-}(\Omega^-)$.
Therefore, we can use the definition~\eqref{df:D2pw} of the piecewise
Hessian also for $v$.

\boxed{3} We shall use the construction of \step{2} in order to show
that $v$ is the limit of $v^\ell$,
i.e., that $\enorm[\infty]{v-v^\ell}\to0$ as $\ell\to\infty$.
To this end, arguing similar as for~\eqref{conv:jumps_CS} we have 
\begin{align}\label{jump(v-v^ell)->0}
  \int_{\sides^+}h_+^{-1}\jumpn{}{(v-v^\ell)}^2\ds\to 0
  \qquad\text{as}~\ell\to \infty.
\end{align}
It therefore remains to prove that $\norm{\DG^2 v-\DG^2 v^\ell}\to 0$
as $\ell\to 0$. Since the Cauchy sequence property implies that
$\norm[L^2(\Omega)]{\DG^2 v^\ell- \vec d}\to 0$ for some $\vec d\in
L^2(\Omega)^{2\times 2}$ as $\ell\to\infty$, it thus suffices to prove
$\DG^2 v=\vec d$.

To this end, we first conclude as
for~\eqref{eq:Eml->vml} from Lemma~\ref{lem:stability_enriching} (but
this time
for $\alpha=2$)
that
\begin{align*}
  \norm[\Omega^{-}_{k}]{D^2\Cipolk[m_\ell] (v^\ell_{m_\ell})
      -\DG^2v^\ell_{m_\ell}}^2 &\Cleq \int_{{\sidesk[m_\ell]\setminus\sidesk^{2+}}}
    \hG[m_\ell]^{-1}\jumpn{}{ v^\ell_{m_\ell}}^2 \ds\le 4\epsilon^2+ \frac2{\ell^2},
\end{align*}
where $m_\ell\ge k\ge K$ as for \eqref{k-jump<eps} as well as
$\enorm[m_\ell]{v^\ell_{m_\ell}-v^\ell}<1/\ell$, $\ell\ge L$, and
Proposition~\ref{prop:boundedness_of_energy_norm} in the last step.
We apply this now to the distributional Hessian 
\begin{align*}
  \langle D^2 v^\ell,\vec\varphi \rangle &= \int_{\Omega_k^-} \DG^2
  v^\ell \colon \vec\varphi  \dx+\int_{\Omega_k^+} \DG^2
  v^\ell \colon \vec\varphi  \dx
  -\int_{\sidesk^+}\jumpn{}{v^\ell}\vec\varphi \normal\cdot\normal\ds\\
  &= \int_{\Omega_k^-} \DG^2\Cipolk[m_\ell] (v^\ell_{m_\ell})
  \colon \vec\varphi  \dx - \int_{\Omega_k^-} (\DG^2\Cipolk[m_\ell] (v^\ell_{m_\ell})-\DG^2v^\ell)
    \colon \vec\varphi \dx
  \\
  &\quad+\int_{\Omega_k^+} \DG^2
  v^\ell \colon \vec\varphi  \dx
  -\int_{\sidesk^+}\jumpn{}{v^\ell}\vec\varphi \normal\cdot\normal\ds
\end{align*}
with $\vec\varphi\in
C_0^\infty(\Omega)^{2\times 2}$. In fact in combination
with~\eqref{jump(v-v^ell)->0} and \eqref{conv:enrich_FEM_function}, we
obtain
\begin{gather*}
  \int_{\Omega_k^-} \DG^2\Cipolk[m_\ell] (v^\ell_{m_\ell})
  \colon \vec\varphi  \dx\to \int_{\Omega_k^-} D^2w
  \colon \vec\varphi  \dx\quad\text{as}~\ell\to\infty
  \\
  \lim_{\ell\to\infty}\abs{\int_{\Omega_k^-} (\DG^2\Cipolk[m_\ell] (v^\ell_{m_\ell})-\DG^2v^\ell)
    \colon \vec\varphi  \dx}\Cleq \epsilon\norm[L^2(\Omega)]{\vec\varphi}
  \intertext{and}
  \int_{\Omega_k^+} \DG^2
  v^\ell \colon \vec\varphi  \dx
  -\int_{\sidesk^+}\jumpn{}{v^\ell}\vec\varphi
  \normal\cdot\normal\ds\to \int_{\Omega_k^+} \DG^2
  v\colon \vec\varphi  \dx
  -\int_{\sidesk^+}\jumpn{}{v}\vec\varphi \normal\cdot\normal\ds
\end{gather*}
as $\ell\to\infty$, where we have used 
strong covergence $ v^\ell|_{\Omega_k^+}\to
v|_{\Omega_k^+}$ in $\P_2(\gridk^+)$ for the last estimate. We thus have
for all $\vec\varphi\in
C_0^\infty(\Omega)^{2\times 2}$
\begin{align*}
  \abs{\int_\Omega(\chi_{\Omega_k^-} D^2w+\chi_{\Omega_k^+}\DG^2v -
  \vec d)\colon\vec\varphi  \dx}\Cleq
  \epsilon  \norm[L^2(\Omega)]{\vec\varphi}
\end{align*}
Now using uniform integrability as $k\to\infty$ and recalling that
$\epsilon>0$ was arbitrary, we conclude the assertion since
$\DG^2v|_{\Omega^-}=\DG^2 w|_{\Omega^-}$.

\boxed{4} We conclude by showing that that for  $v_k := \Gipolk w \in \V_k$, $k\in\N_0$,
we have $\enorm[k]{v-v_k} \to 0 $ as $k \to \infty$, and
$\limsup_{k \to \infty}\enorm[k]{v_k}<\infty$;
here $w \in H^2_0(\Omega)$ is the function defined
in~\eqref{conv:enrich_FEM_function}.
The uniform boundedness follows since from Lemma~\ref{lem:stability_interpolation}, we
have
\begin{align*}
  \enorm[k]{v_k} \Cleq \sum_{\elm \in \gridk}
\int_{\omega_k^3(\elm)} \abs{D^2w}^2 \dx \Cleq
  \norm[H^2_0(\Omega)]{w}< \infty.
\end{align*}

We split 
$\enorm[k]{v-v_k}^2$ according to
$\gridk=\gridk^{2-} \cup\gridk^{2+} $ and consider the corresponding terms
  separately. 
On the set $\grid_k^{2-}$ we use the density of $H^3_0(\Omega)$ in
$H^2_0(\Omega)$ and choose for arbitrarily fixed $\epsilon>0$ some $w_\epsilon \in H^3_0(\Omega)$ such that
$\norm[H^2(\Omega^-)]{w-w_\epsilon} \leq
\norm[H^2(\Omega)]{w-w_\epsilon} <\epsilon$. Thanks to the triangle
inequality and the stability of $ \Gipolk$ (Lemma~\ref{lem:stability_interpolation}), we have
\begin{align}\label{eq:|v-v_k|Tk-}
  \begin{aligned}
   &\sum_{\elm \in \gridk^{2-}} \bigg[
          \int_\elm  \abs{D^2 \Gipolk w -\DG^2 v }^2 \dx+\int_{\partial \elm}
          \hG[k]^{-1} \jumpn{}{(\Gipolk w-v)}^2 \ds \bigg]
\\
&\quad  \Cleq \sum_{\elm \in \gridk^{2-}} \bigg[\int_\elm \abs{D^2 \Gipolk( w
  -w_\epsilon) }^2 +  \abs{D^2 (\Gipolk w_\epsilon
  -w_\epsilon) }^2 +
\abs{D^2 ( w_\epsilon
  -v) }^2 \dx  \\
& \qquad\qquad\quad   +\int_{\partial\elm} h_k^{-1}\jumpn{}{\Gipolk(w-w_\epsilon)} ^2
  +h_k^{-1}\jumpn{}{\Gipolk w_\epsilon} ^2  + h_k^{-1}\jumpn{}{v} ^2  \ds\bigg] \\
& \quad \Cleq \int_{N_k^3(\grid_k^{2-} ) }  \abs{D^2 (w-w_\epsilon)}^2 
\dx  +  \int_{\grid_k^{2-}}  \abs{D^2 ( w_\epsilon
  -v) }^2 + \abs{D^2(\Gipolk w_\epsilon
  -w_\epsilon) }^2 \dx\\
& \qquad +\sum_{\elm \in \grid_k^{2-}} 
  \int_{\partial\elm}  h_k^{-1}\big[
  \jumpn{}{\Gipolk w_\epsilon} ^2  + \jumpn{}{v} ^2  \big] \ds.
  \end{aligned}
\end{align}
In order to bound the terms concerning the interpolation operator, we 
employ a scaled trace theorem together with Lemma~\ref{lem:properties_interpolation_operator}\ref{it:stability_interpolation} and
\ref{it:interpol_preserves_polynomials_locally} to obtain 
\begin{align}\label{eq:|v-v_k|Tk-a}
  \begin{aligned}
    \sum_{\elm\in \gridk^{2-}} & \int_\elm \abs{D^2(\Gipolk w_\epsilon
      -w_\epsilon) }^2 + \int_{\partial \elm}\hG[k]^{-1}
    \jumpn{}{\Gipolk w_{\epsilon} }^2
    \ds \\
    &\leq 2\sum_{\elm\in \gridk^{2-}} \int_\elm \abs{D^2 \Gipolk(
      w_{\epsilon} -Q_\elm ) }^2 +\abs{D^2 ( w_{\epsilon} -Q _\elm)
    }^2 \dx
    \\
    &\quad+ \sum_{\elm\in \gridk^{2-}} \int_{\partial \elm}\hG[k]^{-1}
    \jumpn{}{\Gipolk ( w_{\epsilon} -Q_\elm) }^2
    \ds \\
    &\Cleq \sum_{\elm\in \gridk^{2-}} \int_{\omega^3_k(\elm)}
    \hG[k]^{-2}\abs{\nabla ( w_{\epsilon} -Q_\elm ) }^2 +\abs{\DG^2 (
      w_{\epsilon} -Q _\elm) }^2 \dx
    \\
    &\Cleq \int_{N^3_k(\grid_k^{2-})} \hG[k]^2 \sum_{|\alpha| =3}
    \abs{D^\alpha w_\epsilon}^2 \Cleq \norm[L^\infty(\Omega)]{\hG[k]
      \chi_{\Omega^{5-}_k}}^2 \int_{\Omega} \sum_{\abs{\alpha}=3}
    \abs{D^\alpha w_{\epsilon}}^2 \dx.
  \end{aligned}
\end{align}
Here, we have used the Bramble-Hilbert
Lemma (\cite{DupontScott:80}) for suitable chosen $Q_\elm\in\P_2(\omega_k^3(\elm))$,
$\elm\in\gridk$ in the penultimate estimate as well as $\Omega(N^3_k(\grid_k^{2-})\subset
\Omega^{5-}_k$ and 
the finite overlap of neighbourhoods in the last step. Thanks to
Lemma~\ref{lem:Omega_star_and_h_vanishing} the last term vanishes as $k\to\infty$.

Recalling $v|_{\Omega^-}=w|_{\Omega^-}$, we conclude from
Lemma~\ref{lem:Omega_star_and_h_vanishing} from  the uniform integrability of $\DG^2 v$ and $D^2
w_\epsilon$ that 
\begin{align}\label{eq:|v-v_k|Tk-b}
  \begin{aligned}
    \lim_{k\to\infty}\sum_{\elm \in \gridk^{2-}} \int_\elm \abs{D^2 
      w_\epsilon -\DG^2v }^2 \dx &+ \int_{N_k^3(\grid_k^{2-} ) } \abs{D^2
      (w-w_\epsilon)}^2
    \dx \\
    &\lesssim  \norm[H^2(\Omega)]{w_\epsilon -w}^2\le \epsilon^2,
  \end{aligned}
\end{align}
thanks to the finite overlap of neighbourhoods.

For the remaining jump term in \eqref{eq:|v-v_k|Tk-}, we infer that 
\begin{multline*}
  \sum_{\elm \in \gridk^{2-}} \int_{\partial \elm} \hG[k]^{-1} \jumpn{}{ v
  }^2
  \ds =\sum_{\elm \in \gridk} \int_{\partial \elm} \hG[k]^{-1} \jumpn{}{ v
  }^2\ds-\sum_{\elm \in \gridk^{2+}} \int_{\partial \elm} \hG[+]^{-1} \jumpn{}{ v
  }^2 \ds
  \\
  \to \sum_{\elm \in \grid^+} \int_{\partial \elm} \hG[+]^{-1} \jumpn{}{ v
  }^2\ds-\sum_{\elm \in \grid^{+}} \int_{\partial \elm} \hG[+]^{-1} \jumpn{}{ v
  }^2 \ds =0
\end{multline*}
as $k \to \infty$, thanks to
\eqref{conv:jumps_CS} and Lemma~\ref{lem:Omega_star_and_h_vanishing}.
Inserting this,~\eqref{eq:|v-v_k|Tk-a} and~\eqref{eq:|v-v_k|Tk-b}
into~\eqref{eq:|v-v_k|Tk-}, and recalling that that $\epsilon>0$ was chosen
arbitrary, we have proved 
\begin{align}\label{est:V_infty_is_Hilbert_S3_2}
  \lim_{k\to\infty}\sum_{\elm \in \gridk^{2-}} \bigg[
          \int_\elm  \abs{D^2 \Gipolk w -\DG^2 v }^2 \dx+\int_{\partial \elm}
          \hG[k]^{-1} \jumpn{}{(\Gipolk w-v)}^2 \ds \bigg] =0.
\end{align}

Let now $\elm\in \gridk^{2+}$. Then we have for all $m_\ell\ge k$
that $\gridk^{+}\subset\grid_{m_\ell}^{+}$ and thus $v_{m_\ell}^\ell|_{\omega_k(\elm)} \in \polspace[2](\neighk(\elm))\cap
C(\omega_k(\elm))$ (see
step \boxed{2} for the definition of $ v_{m_\ell}^\ell$ and
$m_\ell$). Therefore, Lemma~\ref{lem:properties_interpolation_operator}\ref{it:interpol_is_left_inverse_of_enriching} 
implies
\begin{align*}
  v_k=\Gipolk w\leftarrow \Gipolk
  \Cipolk[m_\ell]v_{m_\ell}^\ell=\Gipolk
  \Cipolk v_{m_\ell}^\ell =v_{m_\ell}^\ell\rightarrow v \quad \text{in}~\polspace[2](\elm)
\end{align*}
as $\ell\to\infty$.
Consequently, for all $k\in\N_0$, we have
\begin{align*}
  \sum_{\elm\in\gridk^{2+}} 
  \int_\elm  \abs{D^2 v_k -D^2 v }^2 \dx+\int_{\partial \elm}
  \hG[k]^{-1} \jumpn{}{(v_k-v)}^2 \ds =0.
\end{align*}
Combining this with~\eqref{est:V_infty_is_Hilbert_S3_2} we have
constructed a sequence $\{v_k\}_{k\in \N_0}$ with $v_k=\Gipolk w\in\V_k$ such
that that $\enorm[k]{v_k-v}^2\to 0$ as $k \to \infty$.  This proves
$v\in\V_\infty$.

Overall, we have thus showed that $\lim_{\ell \to \infty} v^\ell = v \in
\V_\infty$, which concludes the proof. \qed

\subsection{Proof of Theorem \ref{thm:u_k_to_u_infty}}
To identify a candidate for the limit of the sequence $\{u_k\}_{k\in\N_0}$ of discrete
approximations computed by the \ACIPGM, we employ
Proposition~\ref{prop:Friedrich_type_estimate_global}  and
\eqref{est:uniform_stab_solution}, and conclude  that
\begin{align}
  \label{conv:u_k_to_ovu_wH1}
  u_{k_j} \wconv \ovu \qquad\text{weakly in}~H^1_0(\Omega)\quad\text{as}~j\to\infty
\end{align}
for some subsequence $\{k_j\}_{j\in\N_0}\subset\{k\}_{k\in\N_0}$ and
$\ovu\in H_0^1(\Omega)$. In the following, we shall see that
in fact $u_\infty=\overline u_\infty \in \V_\infty$. Thus
$\{u_k\}_{k\in\N_0}$ has only one weak accumulation point and the whole
sequence converges. Finally we shall
conclude the section with proving  the strong convergence $\lim_{k \to \infty}\enorm[k]{u_k
  -u_\infty} = 0$ claimed in Theorem 
\ref{thm:u_k_to_u_infty}.

\begin{lem}\label{lem:u_infty_in_limit_space}
  We have $\ovu \in \V_\infty$.
  
\end{lem}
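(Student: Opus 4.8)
The plan is to extract from the weakly convergent subsequence in~\eqref{conv:u_k_to_ovu_wH1} a further (not relabelled) subsequence along which every object needed to verify the defining properties of $\V_\infty$ converges, and then to imitate the constructions used in the proof of Lemma~\ref{lem:limit_space_is_Hilbert}. First I would combine the stability bound~\eqref{est:uniform_stab_solution} with Proposition~\ref{prop:Friedrich_type_estimate_global} and Proposition~\ref{prop:total_variation_of_gradient_bounded_by_energy} to see that $\{\enorm[k]{u_k}\}_k$, and hence $\{\DG^2 u_{k_j}\}_j$ in $L^2(\Omega)^{2\times2}$ and $\{\nabla u_{k_j}\}_j$ in $BV(\Omega)^2$, are bounded. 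Passing to a subsequence, $\nabla u_{k_j}\to\nabla\ovu$ strongly in $L^1(\Omega)^2$ by compactness of $BV(\Omega)^2\hookrightarrow L^1(\Omega)^2$ (the limit being $\nabla\ovu$ because of distributional convergence), so $\nabla\ovu\in BV(\Omega)^2$; by Rellich, $u_{k_j}\to\ovu$ strongly in $L^2(\Omega)$, whence on every fixed $\elm\in\grid^+$, where $u_{k_j}|_\elm\in\polspace[2](\elm)$ for $k_j$ large, also $\ovu|_\elm\in\polspace[2](\elm)$ and $u_{k_j}|_\elm\to\ovu|_\elm$ in every norm on the finite-dimensional space $\polspace[2](\elm)$. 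This already settles $\ovu\in H^1_0(\Omega)$, $\nabla\ovu\in BV(\Omega)^2$, $\ovu|_\elm\in\P_r(\elm)$ for $\elm\in\grid^+$, and the existence of $L^1$-traces of $\nabla\ovu$ on all mesh faces.

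Next I would treat the regularity of $\ovu$ on $\Omega^-$. Applying the smoothing operator~\eqref{df:enriching_operator} to $u_k\in\V_k$ and using Lemma~\ref{lem:stability_enriching} with $\alpha=2$, the sequence $\{\Cipolk[k_j] u_{k_j}\}_j$ is bounded in $H^2_0(\Omega)$; after a further extraction $\Cipolk[k_j] u_{k_j}\wconv w$ in $H^2_0(\Omega)$ for some $w\in H^2_0(\Omega)$ (and strongly in $L^2(\Omega)$ by Rellich). Lemma~\ref{lem:stability_enriching} with $\alpha=0$, together with $\int_{\sides_k}\hG[k]^{-1}\jumpn{}{u_k}^2\ds\Cleq\|f\|_{L^2(\Omega)}^2$ and Lemma~\ref{lem:Omega_star_and_h_vanishing}, gives $\|u_{k_j}-\Cipolk[k_j] u_{k_j}\|_{\Omega^{2-}_{k_j}}\to0$. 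Since $\Omega^-\subset\Omega^{2-}_k$ for every $k$, passing to the limit on $\Omega^-$ yields $\ovu|_{\Omega^-}=w|_{\Omega^-}$, i.e.\ $\ovu|_{\Omega^-}\in H^2_{\partial\Omega\cap\partial\Omega^-}(\Omega^-)$, so that $\DG^2\ovu$ is well defined by~\eqref{df:D2pw}. A bound on the limit jumps comes along the way: for $\side\in\sides_k^+$ the two adjacent elements are eventually never refined, so $\hG[k]|_\side=h_+|_\side$ and $\jumpn{}{u_{m_j}}|_\side\to\jumpn{}{\ovu}|_\side$; hence, with $\sides_k^+\subseteq\sides_m^+$ for $m\ge k$, Fatou and $\int_{\sides_m^+}h_+^{-1}\jumpn{}{u_m}^2\ds\le\sigma^{-1}\enorm[m]{u_m}^2\Cleq\|f\|_{L^2(\Omega)}^2$ give $\int_{\sides_k^+}h_+^{-1}\jumpn{}{\ovu}^2\ds\Cleq\|f\|_{L^2(\Omega)}^2$, whence $\int_{\sides^+}h_+^{-1}\jumpn{}{\ovu}^2\ds<\infty$ by monotone convergence.

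It then remains to produce an admissible recovery sequence. I would take $v_k:=\Gipolk[k] w\in\V_k$ as in~\eqref{df:interpolation_operator_global}; Lemma~\ref{lem:stability_interpolation} gives at once $\enorm[k]{v_k}\Cleq\|w\|_{H^2_0(\Omega)}$, so $\limsup_k\enorm[k]{v_k}<\infty$. To prove $\enorm[k]{\ovu-v_k}\to0$ I would split $\gridk=\gridk^{2-}\cup\gridk^{2+}$. On $\gridk^{2+}$ one argues as in the final step of the proof of Lemma~\ref{lem:limit_space_is_Hilbert}: for $\elm\in\gridk^{2+}$ and $m_j\ge k$ large one has $u_{m_j}|_{\omega_k(\elm)}\in\polspace[2](\neighk(\elm))\cap C(\omega_k(\elm))$ and $\gridk$ coincides with $\grid_{m_j}$ on $\neighk^2(\elm)$, so Lemma~\ref{lem:properties_interpolation_operator}\ref{it:interpol_is_left_inverse_of_enriching} yields $\Gipolk[k]\Cipolk[m_j] u_{m_j}|_\elm=u_{m_j}|_\elm$; letting $j\to\infty$, the left-hand side tends to $\Gipolk[k] w|_\elm=v_k|_\elm$ (weak $H^2_0$-convergence together with $H^2_0(\Omega)\hookrightarrow C(\overline\Omega)$ gives convergence of the nodal values defining $\Gipolk[k]$) and the right-hand side to $\ovu|_\elm$, so $v_k=\ovu$ on each $\elm\in\gridk^{2+}$ and this part of the energy norm vanishes identically. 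On $\gridk^{2-}$ the contribution is controlled exactly by the computation leading to~\eqref{est:V_infty_is_Hilbert_S3_2} in the proof of Lemma~\ref{lem:limit_space_is_Hilbert}, with $v$ replaced by $\ovu$, using $\ovu|_{\Omega^-}=w|_{\Omega^-}$, the density of $H^3_0(\Omega)$ in $H^2_0(\Omega)$, Corollary~\ref{cor:interpolation_estimate}, Lemma~\ref{lem:Omega_star_and_h_vanishing}, and the jump convergence $\int_{\sides_k}\hG[k]^{-1}\jumpn{}{\ovu}^2\ds\to\int_{\sides^+}h_+^{-1}\jumpn{}{\ovu}^2\ds$ (the analogue of~\eqref{conv:jumps_CS} for $\ovu$), which I would deduce, using that the jump set of $\nabla\ovu$ is contained in $\Gamma^+$ together with $\ovu|_{\Omega^-}=w|_{\Omega^-}$, from a dominated-convergence argument over the faces in $\sides_k\setminus\sides_k^+$ and Lemma~\ref{lem:Omega_star_and_h_vanishing}. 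Combining the two parts gives $\enorm[k]{\ovu-v_k}\to0$, so $\ovu$ satisfies all defining conditions of $\V_\infty$ and $\ovu\in\V_\infty$.

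The main obstacle is the jump bookkeeping for the weak limit $\ovu$. In Lemma~\ref{lem:limit_space_is_Hilbert} the approximants form a Cauchy sequence in $\enorm[\infty]{\cdot}$, and the first step there transfers all jump information directly; here $\{u_{k_j}\}$ converges only weakly, so one must first identify the $H^2_0(\Omega)$-function $w$ via the smoothing operator, prove $\ovu|_{\Omega^-}=w|_{\Omega^-}$, and exploit the localisation of the jump set of $\nabla\ovu$ to $\Gamma^+$ in order to establish both the finiteness of $\int_{\sides^+}h_+^{-1}\jumpn{}{\ovu}^2\ds$ and the convergence $\int_{\sides_k}\hG[k]^{-1}\jumpn{}{\ovu}^2\ds\to\int_{\sides^+}h_+^{-1}\jumpn{}{\ovu}^2\ds$. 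The second delicate point is the $\gridk^{2+}$ identity $v_k|_\elm=\ovu|_\elm$, whose proof requires recovering the local $\polspace[2]$-values of $\ovu$ from the only weakly convergent sequence through the smoothed functions $\Cipolk[m_j] u_{m_j}$ and a careful check that smoothing and quasi-interpolation commute on the locally unrefined patches. Everything else is essentially a transcription of arguments already carried out for Lemma~\ref{lem:limit_space_is_Hilbert}.
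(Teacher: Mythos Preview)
Your proposal is correct and follows essentially the same route as the paper: extract $w\in H_0^2(\Omega)$ as the weak limit of the smoothed sequence $\Cipolk[k_j]u_{k_j}$, identify $\ovu|_{\Omega^-}=w|_{\Omega^-}$, set $v_k=\Gipolk w$, and split $\gridk=\gridk^{2-}\cup\gridk^{2+}$ exactly as in step~\boxed{4} of Lemma~\ref{lem:limit_space_is_Hilbert}. One small imprecision: the convergence $\Gipolk\Cipolk[m_j]u_{m_j}\to\Gipolk w$ does not rely on the embedding $H_0^2(\Omega)\hookrightarrow C(\overline\Omega)$ and ``nodal values'', since $\Gipolk$ is defined via the $L^2$-pairings $\langle\phi_z^*,\cdot\rangle_{L^2(\Omega)}$; weak $H_0^2$-convergence (hence weak, or by compactness even strong, $L^2$-convergence) against the fixed $\phi_z^*\in L^2(\Omega)$ is what is actually used.
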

\begin{proof}
  \boxed{1}  Thanks to the uniform boundedness
\eqref{est:uniform_stab_solution}  of $\enorm[k_j]{u_{k_j}}$, we conclude
with  Propositions
\ref{prop:Friedrich_type_estimate_global} and 
\ref{prop:total_variation_of_gradient_bounded_by_energy} that 
\begin{align}\label{eq:uk->ovuBV}
  \nabla u_{k_j} \wconv^* \nabla \ovu\qquad\text{weakly* in}~BV(\Omega)^2\quad\text{as}~j\to\infty;
\end{align}
compare also with~\cite[Theorem~3.23]{Ambrosio:2000}.
Moreover, Lemma~\ref{lem:stability_enriching} ($\alpha=2$) yields for
the smoothing operator from~\eqref{df:enriching_operator} that 
\begin{align*}
  \norm[\Omega]{D^2\Cipolk[k_j] (u_{k_j})} \leq
  \norm[\Omega]{\DG^2 (\Cipolk[k_j] (u_{k_j}) -u_{k_j})}+\norm[\Omega]{\DG^2 u_{k_j}}
  \Cleq \enorm[k_j]{u_{k_j}}.
\end{align*}
We thus have 
\begin{align}\label{eq:w}
  \Cipolk[k_j] (u_{k_j}) \wconv w \quad \text{weakly in } H^2_0(\Omega)\quad\text{as}~j\to\infty
\end{align}
for a not relabelled subsequence.  Arguing as in step \boxed{2} in the
proof of Lemma~\ref{lem:limit_space_is_Hilbert}, we obtain, that
$\norm[\Omega^{2-}_{k_j}]{\Cipolk[k_j]
  (u_{k_j})-u_{k_j}} \to 0$ as $j \to \infty$ and
thus~\eqref{eq:uk->ovuBV} implies
\begin{align*}
  \ovu |_{\Omega^-}=w|_{\Omega^-} \in H^2_{\partial \Omega \cap
  \partial \Omega^-}(\Omega^-). 
\end{align*}

\boxed{2}~For $w$ from~\eqref{eq:w}, defining
\begin{align*}
  v_k:=\Gipolk w \in \V_k,
\end{align*}
we have by Lemma~\ref{lem:stability_interpolation} that
$\enorm[k]{v_k}\Cleq\norm{D^2 w}<\infty$. Therefore, in order to
conclude the proof, it remains to
show that $\enorm[k]{v_k -\ovu} \to 0$ as $k \to \infty$. In order to
see this, we observe 
that the weak
convergence~\eqref{conv:u_k_to_ovu_wH1} implies
strong convergence of the restrictions $u_{k_j}|_\elm$ 
in the finite
dimensional $\polspace[2](\elm)$ 
and thus $\ovu |_{\elm} \in \polspace[2](\elm)$.
Moreover,  thanks to
Lemma~\ref{lem:properties_interpolation_operator}\ref{it:interpol_is_left_inverse_of_enriching},
we have 
$\Gipolk[k]\Cipolk[k_j]u_{k_j}|_\elm=\Gipolk[k_j]\Cipolk[k_j]u_{k_j}|_\elm$
for $K\in \gridk^{1+}$ and 
 $k\le k_j$. Therefore, we have 
\begin{align*}
  v_k=\Gipolk[k]w\leftarrow \Gipolk[k]\Cipolk[k_j]u_{k_j}=\Gipolk[k_j]\Cipolk[k_j]u_{k_j} =u_{k_j}\to
  \ovu\qquad \text{on}~\elm\in \gridk^{1+}
\end{align*}
as $j\to\infty$ and thus
\begin{align*}
  \sum_{\side\in\sides^+}\int_{\side} \hG[+]^{-1} \jumpn{}{\ovu}^2
  \ds&=\lim_{k\to\infty}\sum_{\side\in\sides^{1+}_k}\int_{\side}
  \hG[k]^{-1} \jumpn{}{\ovu}^2 \ds
  \\
  &=\lim_{k\to\infty}\sum_{\side\in\sides^{1+}_k}\int_{\side}
  \hG[k]^{-1} \jumpn{}{v_k}^2 \ds\le \sup_{k}\enorm[k]{v_k}^2<\infty.
\end{align*}
In the same vein, we have that $\DG^2 v_k|_{\Omega_k^{1+}} = \DG^2
\ovu|_{\Omega_k^{1+}}$,
which implies $\DG^2 v_k \to \DG^2 \ovu$
a.e. in $\Omega^+$ as $k\to\infty$ and thus $\DG^2 \ovu\in
L^2(\Omega^+)$ thanks to Fatou's Lemma.
Together with $D^2 \ovu=D^2 w$ in $\Omega^-$ from \step{1}, this yields $\DG^2
\ovu\in L^2(\Omega)$ and we conclude $\enorm[\infty]{\ovu}<\infty$.

The assertion follows now by
splitting $\enorm[k]{v_k -\ovu}^2$ according to  $
  \gridk= \gridk^{2-} \cup
  \gridk^{2+} $ and investigating the resulting terms separately
similar to step \step{4} in the proof of
  Lemma~\ref{lem:limit_space_is_Hilbert}.
\end{proof}
In order to prove that $\ovu$ solves \eqref{eq:C0IP_limit}, we need to
identify the limit of its distributional derivatives. To this end, we note that
by~\eqref{est:global_liftings} and~\eqref{est:uniform_stab_solution}
we have $\norm[\Omega]{\DG^2 
  u_{k_j}}\Cleq 1$ and $\norm[\Omega]{\liftG[k_j](\jump{\pn u_{k_j}}
  )}\Cleq 1$. Consequently, there exist $\vec{T_r}, \vec{T_s}\in
L^2(\Omega)^{2 \times 2} $ such that  for a not relabelled
subsequence we obtain 
\begin{align}\label{eq:TsTr}
  \DG^2 u_{k_j} \wconv \vec{T_r}\qquad \text{and}\qquad   \liftG[k_j](\jump{\pn u_{k_j}} )\wconv \vec{T_s}
 \end{align}
weakly in $L^2(\Omega)^{2 \times 2}$ as $j \to \infty$.
\begin{lem}\label{lem:weak_convergence_Omega^-}
  Let $\set{u_{k_j}}_{j \in \N_0}$ be the subsequence of discrete solutions
  with weak $H^1_0(\Omega)$ limit $\ovu \in \V_\infty$
  from~\eqref{conv:u_k_to_ovu_wH1}. 
  Then, we have for $\vec{T_s},\vec{T_r}\in L^2(\Omega)^{2 \times 2}$
  from~\eqref{eq:TsTr} that
  \begin{align*}
     (\vec{T_r} -\vec{T_s})|_{\Omega^-}= D^2
     \ovu|_{\Omega^ -}\qquad \text{a.e. in}~\Omega^-.
  \end{align*}
\end{lem}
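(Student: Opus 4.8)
The plan is to compute the distributional Hessian of $u_{k_j}$, test it against an arbitrary $\vec\varphi\in C_0^\infty(\Omega^-)^{2\times2}$ (regarded, via extension by zero, as an element of $C_0^\infty(\Omega)^{2\times2}$), and pass to the limit $j\to\infty$ term by term. Since $u_{k_j}\in\V_{k_j}\subset H^1_0(\Omega)$ is only piecewise $H^2$, we have
\[
  -\int_\Omega\nabla u_{k_j}\cdot\divo\vec\varphi\dx=\int_\Omega\DG^2u_{k_j}\colon\vec\varphi\dx-\int_{\sidesk[k_j]}\jumpn{}{u_{k_j}}\,(\vec\varphi\normal\cdot\normal)\ds .
\]
The weak convergence $u_{k_j}\wconv\ovu$ in $H^1_0(\Omega)$ gives $\nabla u_{k_j}\wconv\nabla\ovu$ in $L^2(\Omega)^2$, so the left‑hand side tends to $-\int_\Omega\nabla\ovu\cdot\divo\vec\varphi\dx$; because $\supp\vec\varphi\subset\Omega^-$ and, by Lemma~\ref{lem:u_infty_in_limit_space}, $\ovu|_{\Omega^-}\in H^2_{\partial\Omega\cap\partial\Omega^-}(\Omega^-)$, an integration by parts with vanishing boundary contribution rewrites this limit as $\int_{\Omega^-}D^2\ovu\colon\vec\varphi\dx$. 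By~\eqref{eq:TsTr} the first term on the right converges to $\int_{\Omega^-}\vec{T_r}\colon\vec\varphi\dx$. Hence it remains to show that the jump term converges to $\int_{\Omega^-}\vec{T_s}\colon\vec\varphi\dx$; since $\vec\varphi$ is arbitrary and $C_0^\infty(\Omega^-)^{2\times2}$ is dense in $L^2(\Omega^-)^{2\times2}$, the assertion follows.

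Identifying the limit of the jump term is the main obstacle, because the lifting operator $\liftG[k_j]$ is, through its defining relation~\eqref{df:local_liftings}--\eqref{est:global_liftings}, only compatible with piecewise constant tensor fields (here $\polspace[r-2]=\polspace[0]$), while $\vec\varphi$ is not piecewise constant. To bridge this gap I would introduce $\vec\varphi_{k_j}\in\polspace[0](\grid_{k_j})^{2\times2}$, the elementwise $L^2(\Omega)$‑average of $\vec\varphi$, and use~\eqref{df:local_liftings} together with the continuity of $\vec\varphi$ (so that $\mean{\vec\varphi\normal\cdot\normal}=\vec\varphi\normal\cdot\normal$) to write
\[
  \int_{\sidesk[k_j]}\jumpn{}{u_{k_j}}\,(\vec\varphi\normal\cdot\normal)\ds=\int_\Omega\liftG[k_j](\jump{\pn{u_{k_j}}})\colon\vec\varphi_{k_j}\dx+\int_{\sidesk[k_j]}\jumpn{}{u_{k_j}}\,\mean{(\vec\varphi-\vec\varphi_{k_j})\normal\cdot\normal}\ds .
\]
Every $\elm\in\grid_{k_j}$ meeting $\supp\vec\varphi$ must lie in $\grid_{k_j}^{1-}$ (otherwise $\elm\subset\Omega^{1+}_{k_j}\subset\Omega^+$, contradicting $\supp\vec\varphi\subset\Omega^-$), so elementwise approximation gives $\norm[L^\infty(\Omega)]{\vec\varphi-\vec\varphi_{k_j}}\Cleq\norm[L^\infty(\Omega)]{h_{k_j}\chi_{\Omega^{1-}_{k_j}}}\,\norm[L^\infty(\Omega)]{\nabla\vec\varphi}$, which tends to $0$ by Lemma~\ref{lem:Omega_star_and_h_vanishing}; in particular $\vec\varphi_{k_j}\to\vec\varphi$ strongly in $L^2(\Omega)^{2\times2}$.

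Combining this bound with $\int_{\sidesk[k_j]}\abs{\jumpn{}{u_{k_j}}}\ds\Cleq\enorm[k_j]{u_{k_j}}\Cleq\norm{f}$, which holds by Proposition~\ref{prop:total_variation_of_gradient_bounded_by_energy} and~\eqref{est:uniform_stab_solution}, shows that the second term on the right of the last display vanishes as $j\to\infty$. For the first term, $\liftG[k_j](\jump{\pn{u_{k_j}}})\wconv\vec{T_s}$ weakly in $L^2(\Omega)^{2\times2}$ by~\eqref{eq:TsTr} while $\vec\varphi_{k_j}\to\vec\varphi$ strongly in $L^2(\Omega)^{2\times2}$, so a weak--strong pairing yields $\int_\Omega\liftG[k_j](\jump{\pn{u_{k_j}}})\colon\vec\varphi_{k_j}\dx\to\int_\Omega\vec{T_s}\colon\vec\varphi\dx=\int_{\Omega^-}\vec{T_s}\colon\vec\varphi\dx$. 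Putting the three limits together gives $\int_{\Omega^-}D^2\ovu\colon\vec\varphi\dx=\int_{\Omega^-}(\vec{T_r}-\vec{T_s})\colon\vec\varphi\dx$ for every admissible $\vec\varphi$, hence $(\vec{T_r}-\vec{T_s})|_{\Omega^-}=D^2\ovu|_{\Omega^-}$ a.e.\ in $\Omega^-$, as claimed.
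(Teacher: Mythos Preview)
Your strategy mirrors the paper's: compute the distributional Hessian of $u_{k_j}$, rewrite the skeleton jump term via the lifting against the piecewise-constant projection $\vec\varphi_{k_j}$, and pass to the limit by a weak--strong pairing. The individual estimates you give (the support argument forcing the relevant elements into $\gridk[k_j]^{1-}$, the $L^\infty$ bound via Lemma~\ref{lem:Omega_star_and_h_vanishing}, the $L^1$ control of the jumps via Proposition~\ref{prop:total_variation_of_gradient_bounded_by_energy}) are all correct.

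The gap is the restriction $\vec\varphi\in C_0^\infty(\Omega^-)^{2\times2}$. This tacitly assumes that $\Omega^-$ is open---needed both for the zero extension to land in $C_0^\infty(\Omega)^{2\times2}$ and for the density of such $\vec\varphi$ in $L^2(\Omega^-)^{2\times2}$ that you invoke at the end. But $\Omega^+=\bigcup_k\Omega_k^+$ is only an increasing union of closed sets, so $\Omega^-=\Omega\setminus\Omega^+$ is merely $G_\delta$; nothing in the construction excludes refinement histories with $|\overline{\Omega^+}\setminus\Omega^+|>0$, and then your argument yields the identity only a.e.\ on $\operatorname{int}(\Omega^-)$, not on all of $\Omega^-$. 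The paper avoids this by testing against arbitrary $\vec\varphi\in C_0^\infty(\Omega)^{2\times2}$ and deriving the \emph{global} identity $\DG^2\ovu-\vec{T_r}+\vec{T_s}\chi_{\Omega^-}=0$ a.e.\ in $\Omega$, from which the claim on $\Omega^-$ follows by restriction. The price is that the jump contributions on $\sides^+$ no longer drop out; the paper handles them by splitting the skeleton as $\sidesk[\ell]^+\cup(\sidesk[k_j]\setminus\sidesk[\ell]^+)$, using strong convergence of $u_{k_j}$ in the finite-dimensional space $\P_2(\grid_\ell^+)$ on the first part and essentially your remainder estimate on the second, and finally letting $\ell\to\infty$.
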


\begin{proof}
Proposition \ref{prop:total_variation_of_gradient_bounded_by_energy}
and~\eqref{est:uniform_stab_solution} imply that $\{\nabla u_{k_j}\}_{j\in\N_0}$ is
uniformly bounded in $BV(\Omega)^2$. Hence, as
in~\eqref{eq:uk->ovuBV}, we have that $\nabla u_{k_j} \wconv^* \nabla
\ovu$  in $BV(\Omega)^2$, which implies that the Hessian $D^2
u_{k_j}$ converges to $D^2 \ovu$ in the sense of
distributions; 
compare e.g. with \cite[Chapter 3.1]{Ambrosio:2000}. In particular, for $\vec\varphi  \in  C^\infty_0(\Omega)^{2 \times
  2} $, we have
\begin{align}\label{distri-uk->ovu}
  \qquad\int_{\Omega} \divo\vec \varphi \cdot \nabla u_{k_j}\dx \to
\int_{\Omega} \divo\vec \varphi \cdot\nabla\ovu\dx,\quad\text{as}~j\to\infty.
\end{align}
Using the fact that $\ovu\in\V_\infty$, we have that there exists a
sequence $\{v_k\}_{k\in\N_0}$ with $v_k\in\V_k$, $k\in\N_0$, and
$\enorm[k]{\ovu-v_k}\to 0$ as $k\to \infty$. This implies that
\begin{align}\label{eq:distri-ovu}
  \begin{aligned}
    -\int_{\Omega} \divo\vec \varphi \cdot \nabla \ovu\dx
    &=-\lim_{k\to\infty}\int_{\Omega} \divo\vec \varphi \cdot \nabla
    v_k\dx
    \\
    &= \lim_{k\to\infty}\int_{\Omega} \DG^2 v_k \colon \vec \varphi  
    \dx - \int_{\sidesk[k_j]} \jumpn{}{ v_k} \vec\varphi \normal \cdot
    \normal\ds
    \\
    &= \int_{\Omega} \DG^2 \ovu \colon \vec \varphi  \dx
    - 
    \int_{\sides^+} \jumpn{}{ \ovu}\vec\varphi \normal \cdot \normal
    \ds,
  \end{aligned}
\end{align}
where we have used that $\sum_{\side\in\sidesk\setminus\sidesk^+
  }\int_{\side}
  \jumpn{}{
  \ovu}\vec\varphi \normal \cdot \normal \ds\to 0 $ as $k\to\infty$,
thanks to Proposition~\ref{prop:boundedness_of_energy_norm}.

On the other hand, fix $\ell\in\N_0$, and let
$\vec\pi_{k_j}=\vec\pi_{k_j}(\vec\varphi )$ be the $L^2$-projection 
of $\vec\varphi $ onto $\P_0(\grid_{k_j})^{2\times2}$. Then
for the first  term in \eqref{distri-uk->ovu}
\begin{align*}
  -\int_{\Omega} \divo\vec \varphi \cdot \nabla u_{k_j}\dx &=\int_{\Omega} \DG^2 u_{k_j}
  \colon \vec \varphi  \dx - 
  \int_{\sidesk[k_j]}
  \jumpn{}{
  u_{k_j}} \vec\varphi \normal \cdot \normal \ds
\end{align*}
we have, thanks to the definition of the lifting~\eqref{df:local_liftings}, that
\begin{align}\label{eq:distri-uk}
   \begin{aligned}
  &
\int_{\sidesk[k_j]} \jump{\pn
  u_{k_j}} \vec\varphi \normal \cdot \normal\ds
  \\
    &= 
    \int_{\sidesk[\ell]^+}\jump{\pn u_{k_j}} \vec\varphi\normal \cdot \normal 
    \ds+
    \int_{\sidesk[k_j]\setminus\sidesk[\ell]^+}\jump{\pn u_{k_j}}
    \mean{(\vec\varphi-\vec\pi_{k_j} ) \normal \cdot \normal} 
    \ds
    \\
    &\quad+\int_{\Omega_{\ell}^-} \liftG[k_j](\jump{\pn u_{k_j}})
    \colon(\vec \pi_{k_j} -\vec\varphi )\dx+\int_{\Omega_{\ell}^-} \liftG[k_j](\jump{\pn u_{k_j}})
    \colon\vec\varphi \dx
  \end{aligned}
\end{align}
for all $\ell\le k_j$. Thanks to
Lemma~\ref{lem:Omega_star_and_h_vanishing}, for $\epsilon>0$, we have 
\[\norm[L^\infty(\Omega_\ell^-)]{\vec\varphi-\vec\pi_{k_j}}\leq
\norm[L^\infty(\Omega)]{h_{k_j}\chi_{\Omega_\ell^-}}\norm[L^\infty(\Omega)]{\nabla\vec\varphi}\le
\norm[L^\infty(\Omega)]{h_{\ell}\chi_{\Omega_\ell^-}}\norm[L^\infty(\Omega)]{\nabla\vec\varphi}<\epsilon\]
for sufficiently large $\ell=\ell(\epsilon,\vec\varphi)\le k_j$ and thus
\begin{multline*}
    \abs{\int_{\sidesk[k_j]\setminus\sidesk[\ell]^+}\jump{\pn u_{k_j}} \mean{(\vec\varphi-\vec\pi_{k_j})\normal \cdot \normal }
    \ds
    +\int_{\Omega_{\ell}^-} \liftG[k_j](\jump{\pn u_{k_j}})
    \colon(\vec \pi_{k_j} -\vec\varphi )\dx}
  \\\Cleq \epsilon \norm{f} \norm[L^\infty(\Omega)]{\nabla\vec\phi}.
\end{multline*}
As a consequence of~\eqref{eq:uk->ovuBV} and the fact, that
$u_{k_j}|_{\Omega_\ell^+}\in \P_2(\grid_\ell^+)$ is finite
dimensional, we have that
\begin{multline*}
    \int_{\sidesk[\ell]^+}\jump{\pn u_{k_j}} \vec\varphi\normal \cdot \normal 
    \ds+\int_{\Omega_{\ell}^-} \liftG[k_j](\jump{\pn u_{k_j}})
    \colon\vec\varphi \dx\\
    \to \int_{\sidesk[\ell]^+}\jump{\pn \ovu} \vec\varphi\normal \cdot \normal 
    \ds+\int_{\Omega_{\ell}^-} \vec{ T_s}
    \colon\vec\varphi \dx
\end{multline*}
as $j\to\infty$. Upon choosing $\ell$ even larger, we have also
\begin{align*}
  \abs{\int_{\sides^+\setminus\sidesk[\ell]^+}\jump{\pn \ovu} \vec\varphi\normal \cdot \normal 
    \ds+\int_{\Omega_{\ell}^-\setminus\Omega^-} \vec{ T_s}
    \colon\vec\varphi \dx}<\epsilon.
\end{align*}
Inserting this in~\eqref{eq:distri-uk}, we have thanks to the fact
that $\epsilon>0$ was arbitrary, that
\begin{align*}
  -\int_{\Omega} \divo\vec \varphi \cdot \nabla u_{k_j}\dx &\to
                                                             \int_\Omega
                                                             \vec{T_r}
                                                             \colon\vec\varphi
                                                              \dx-\int_{\Omega^-}\vec{T_s}
                                                             \colon\vec\varphi
                                                              \dx-\int_{\sides^+}\jump{\pn\ovu}\colon
                                                             \vec\varphi\normal\cdot\normal\ds. 
\end{align*}
In view of~\eqref{distri-uk->ovu} and~\eqref{eq:distri-ovu}, this thus
implies that
\begin{align*}
 0&= \lim_{j\to\infty}\int_{\Omega} \divo\vec \varphi \cdot \nabla
    (u_{k_j}-\ovu)\dx =\int_\Omega (\DG^2\ovu-\vec{T_r}+\vec
    {T_s}\chi_{\Omega^-})\colon\vec\varphi \dx
\end{align*}
for all $\vec\varphi\in C_0^\infty(\Omega)^{2\times 2}$. The desired
assertion follows from the density of $C_0^\infty(\Omega)$
in $L^2(\Omega)$.
\end{proof}

Now, we are in the position to conclude that $\ovu$ and $u_\infty$ coincide.

\begin{lem}\label{lem:ovu=u_infty}
  We have that $\ovu \in \V_\infty$ 
  solves~\eqref{eq:C0IP_limit} and
  thus $\ovu=u_\infty$. In particular, the
  limit in~\eqref{conv:u_k_to_ovu_wH1} is unique and the full sequence
  $\{u_k\}_{k\in\N_0}$ converges to $u_\infty$ weakly in $H_0^1(\Omega)$.
\end{lem}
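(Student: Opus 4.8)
The plan is to show that $\ovu$ solves the limit problem~\eqref{eq:C0IP_limit}; granting this, uniqueness of its solution (Corollary~\ref{cor:unique_solution_limit}) yields $\ovu=u_\infty$, and since $\{u_k\}$ is bounded in $H^1_0(\Omega)$ by~\eqref{est:uniform_stab_solution} and Proposition~\ref{prop:Friedrich_type_estimate_global}, running the same argument along an arbitrary $H^1_0(\Omega)$-weakly convergent subsequence — extracting along it the further limits $\vec{T_r},\vec{T_s}$ of~\eqref{eq:TsTr} and the function $w$ from Lemma~\ref{lem:u_infty_in_limit_space} — shows that every weak accumulation point equals $u_\infty$, whence the whole sequence converges weakly to $u_\infty$ in $H^1_0(\Omega)$.

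To verify $\bilin[\infty]{\ovu}{v}=\int_\Omega fv\dx$ for fixed but arbitrary $v\in\V_\infty$, I would choose $v_k\in\V_k$ with $\enorm[k]{v-v_k}\to0$ and $\limsup_{k\to\infty}\enorm[k]{v_k}<\infty$ (available by definition of $\V_\infty$) and pass to the limit $j\to\infty$ in the discrete identities $\bilin[k_j]{u_{k_j}}{v_{k_j}}=\int_\Omega fv_{k_j}\dx$. The right-hand side tends to $\int_\Omega fv\dx$ since $v_{k_j}\to v$ in $L^2(\Omega)$ by the broken Poincar\'e--Friedrichs inequality (Proposition~\ref{prop:Friedrich_type_estimate_global}). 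On the left-hand side I would treat the four contributions of~\eqref{df:bilinear_form_VG} separately, exploiting that $\DG^2v_{k_j}\to\DG^2v$ \emph{strongly} in $L^2(\Omega)^{2\times2}$ (a direct consequence of $\enorm[k]{v-v_k}\to0$), whereas $\DG^2u_{k_j}\wconv\vec{T_r}$ and $\liftG[k_j](\jumpn{}{u_{k_j}})\wconv\vec{T_s}$ only weakly by~\eqref{eq:TsTr}. Then the Hessian term converges to $\int_\Omega\vec{T_r}\colon\DG^2v\dx$, the term $-\int_\Omega\liftG[k_j](\jumpn{}{u_{k_j}})\colon\DG^2v_{k_j}\dx$ to $-\int_\Omega\vec{T_s}\colon\DG^2v\dx$, the term $-\int_\Omega\liftG[k_j](\jumpn{}{v_{k_j}})\colon\DG^2u_{k_j}\dx$ to $-\int_\Omega\liftG[\infty](\jumpn{}{v})\colon\vec{T_r}\dx$ (once the strong convergence $\liftG[k_j](\jumpn{}{v_{k_j}})\to\liftG[\infty](\jumpn{}{v})$ in $L^2(\Omega)^{2\times2}$ is established), and the penalty term $\sigma\int_{\sidesk[k_j]}h_{k_j}^{-1}\jumpn{}{u_{k_j}}\jumpn{}{v_{k_j}}\ds$ to $\sigma\int_{\sides^+}h_+^{-1}\jumpn{}{\ovu}\jumpn{}{v}\ds$.

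Both delicate limits are handled by the decomposition $\sidesk[k_j]=\sides^+\cup(\sidesk[k_j]\setminus\sides^+)$. On $\sidesk[k_j]\setminus\sides^+$ the weighted jump mass $\int_{\sidesk[k_j]\setminus\sides^+}h_{k_j}^{-1}|\jumpn{}{v_{k_j}}|^2\ds$ vanishes as $j\to\infty$ by Proposition~\ref{prop:boundedness_of_energy_norm} together with $\enorm[k]{v-v_k}\to0$, so by the lifting bound~\eqref{est:global_liftings} and Cauchy--Schwarz against the uniformly bounded $\enorm[k_j]{u_{k_j}}$ the corresponding parts of the penalty term and of the $v$-lifting disappear. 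On $\sides^+$, Lemma~\ref{lem:neighbourhood_G+} guarantees that each local lifting $\liftG[k_j]^\side$ eventually freezes to $\liftG[\infty]^\side$, and, as in the proof of Lemma~\ref{lem:u_infty_in_limit_space}, the restrictions $u_{k_j}|_\elm$ converge strongly in the finite-dimensional space $\polspace[2](\elm)$ for every $\elm\in\grid^+$; combining this face-by-face convergence with a tail estimate for the ($L^2$-convergent) series defining $\liftG[\infty]$ gives the strong convergence of the $v$-liftings, the stated limit of the penalty term, and the identifications $\vec{T_r}|_{\Omega^+}=\DG^2\ovu|_{\Omega^+}$ and $\vec{T_s}|_{\Omega^+}=\liftG[\infty](\jumpn{}{\ovu})|_{\Omega^+}$. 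On $\Omega^-$ I would invoke Lemma~\ref{lem:weak_convergence_Omega^-}, $(\vec{T_r}-\vec{T_s})|_{\Omega^-}=D^2\ovu|_{\Omega^-}$, and use that $\liftG[\infty](\jumpn{}{\cdot})$ is supported in $\Omega^+$; assembling the four limits then reproduces exactly $\bilin[\infty]{\ovu}{v}$, so $\bilin[\infty]{\ovu}{v}=\int_\Omega fv\dx$ for all $v\in\V_\infty$ and the proof is complete. I expect the main obstacle to be precisely this bookkeeping — the clean split of the skeleton into the stabilized part $\sides^+$ (where Lemma~\ref{lem:neighbourhood_G+} and finite dimensionality force convergence) and the vanishing part $\sidesk[k_j]\setminus\sides^+$, together with the tail estimates for the infinite lifting sums — rather than any single inequality; conceptually, the definition of $\V_\infty$ is used exactly to supply a test sequence $v_{k_j}$ converging strongly in the energy norm, which compensates for the merely weak convergence of the discrete solutions $u_{k_j}$.
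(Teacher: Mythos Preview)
Your proposal is correct and follows essentially the same route as the paper, with only cosmetic differences in organisation. The paper first uses the continuity of $\bilin[k_j]{\cdot}{\cdot}$ together with $\enorm[k_j]{v_{k_j}-v}\to0$ to replace $v_{k_j}$ by the fixed $v\in\V_\infty$ and then analyses $\bilin[k_j]{u_{k_j}}{v}$, whereas you carry $v_{k_j}$ through and pair the weak limits $\vec{T_r},\vec{T_s}$ against the \emph{strongly} convergent $\DG^2v_{k_j}$; both arrive at the same identifications. Your explicit observation $\vec{T_r}|_{\Omega^+}=\DG^2\ovu|_{\Omega^+}$, $\vec{T_s}|_{\Omega^+}=\liftG[\infty](\jumpn{}{\ovu})|_{\Omega^+}$ (together with $\supp\liftG[\infty](\cdot)\subset\Omega^+$) is a clean way to assemble the pieces; the paper instead computes the limits of the three groups of terms directly.

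One point of care: the decomposition $\sidesk[k_j]=\sides^+\cup(\sidesk[k_j]\setminus\sides^+)$ is not literally true, since $\sides^+\not\subset\sidesk[k_j]$ in general. What you actually need --- and what your ``face-by-face convergence plus tail estimate'' remark is implicitly invoking --- is an intermediate level $\ell\le k_j$: one works on the \emph{fixed finite} set $\sidesk[\ell]^+$ (where strong convergence in finite-dimensional $\polspace[2](\elm)$ and the frozen liftings $\liftG[k_j]^\side=\liftG[\infty]^\side$ apply), bounds the remainder over $\sidesk[k_j]\setminus\sidesk[\ell]^+$ by Cauchy--Schwarz against the uniformly bounded $\enorm[k_j]{u_{k_j}}$ and the vanishing weighted jump mass of $v$, and finally lets $\ell\to\infty$ using Proposition~\ref{prop:boundedness_of_energy_norm}. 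This two-parameter $\epsilon$--$\ell$ device is precisely what the paper makes explicit throughout its proof; once you spell it out, your sketch goes through without change.
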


\begin{proof}
  Let  $v \in \V_\infty$ and 
  $\set{v_k}_{k \in\N_0}$, $v_k \in \V_k$ such that $\enorm[k]{v_k-v}\to 0$ as $k \to
  \infty$. Consequently, for the subsequence~\eqref{conv:u_k_to_ovu_wH1} of discrete
  solutions   $\set{u_{k_j}}_{j \in \N_0}$, we
have  
\begin{align}\label{eq:conv_right_hand_side_bilf}
\bilin[k_j]{u_{k_j}}{v_{k_j}}=\scp[\Omega]{f}{v_{k_j}}\to
  \scp[L^2(\Omega)]{f}{v} \quad \text{as }j \to \infty.
\end{align}
Using $\enorm[k]{v_k-v}\to 0$ as $k \to \infty$ again, it suffices to
prove $  \bilin[k_j]{u_{k_j}}{v} 
\to\bilin[\infty]{\ovu}{v}$ as $j \to \infty$. 

To see this, we 
split the bilinear form according to
  \begin{align*}
    \begin{aligned}
    \bilin[k_j]{u_{k_j}}{v}
&= \int_\Omega (\DG^2 u_{k_j}-\liftG[k_j](\jump{\pn u_{k_j}}) \colon \DG^2v \dx
    \\
&\quad-\int_\Omega \liftG[k_j](\jump{\pn v}) \colon \DG^2 u_{k_j} \dx+
\int_{\sidesk[k_j]} \frac{\sigma}{\hG[k_j]}\jumpn{}{u_{k_j}}\jumpn{}{v }\ds \\
&\asdefined I_j- II_j+III_j.
    \end{aligned}
  \end{align*}
and consider the limit of each term separately.

\boxed{1}~From Lemma~\ref{lem:weak_convergence_Omega^-}, we have 
\begin{align}\label{dist->Omega-}
  \int_{\Omega^-} \big(\DG^2 u_{k_j}-\liftG[k_j](\jump{\pn u_{k_j}})\big)
  \colon \DG^2v \dx \to \int_{\Omega^-} D^2 \ovu \colon \DG^2v \dx
\end{align}

For $\ell\le k_j$ we split the domain $\Omega$ according to  
\begin{align*}
  \overline{\Omega}=\overline{\Omega^-} \cup\overline{\Omega_\ell^{1-}\setminus\Omega^-}\cup \overline{\Omega_\ell^{1+}}.
\end{align*}

On $\Omega_\ell^-\setminus\Omega^-$, by uniform integrability of
$\DG^2v$,  Lemma~\ref{lem:Omega_star_and_h_vanishing} and the 
stability of liftings~\eqref{est:global_liftings}, for
$\epsilon>0$ there exists $K(\epsilon)$ such that for all $\ell\ge
K(\epsilon)$, we have 
\begin{multline*}
  \Big|\int_{ \Omega_\ell^{1-} \setminus\Omega^-
 } \Big(\DG^2
  u_{k_j}-\liftG[k_j](\jump{\pn u_{k_j}})-\DG^2
  \ovu+\liftG[\infty](\jump{\pn \ovu}) \Big) \colon \DG^2v \dx \Big|\\
  \begin{aligned}
    & \Cleq \left(
      \enorm[k_j]{u_{k_j}}
      +\enorm[\infty]{\ovu}\right) \norm[\Omega_\ell^{1-} \setminus\Omega^-]{\DG^2v} \le \epsilon.
  \end{aligned}
\end{multline*}
From~\eqref{conv:u_k_to_ovu_wH1} we observe on $\Omega_\ell^+$ that 
$\DG^2u_{k_j}|_{\Omega_\ell^{1+}} \to \DG^2\ovu|_{\Omega_\ell^{1+}}$
strongly in $L^2(\Omega_\ell^{1+})$ as
$j\to\infty$ since
$\polspace[0](\gridk[\ell]^{1+})^{2\times2}$ is finite dimensional for fixed
$\ell$. Therefore, we have
\begin{align*}
  \int_{\Omega_\ell^{1+}} \DG^2 u_{k_j}\colon
  \DG^2v \dx \to  \int_{\Omega_\ell^{1+}} \DG^2 \ovu  \colon
  \DG^2v \dx \quad \text{as }j \to \infty.
\end{align*}
Similar arguments prove $\jump{\pn u_{k_j}}|_{\sides_\ell^{1+}} \to
\jump{\pn \ovu}|_{\sides_\ell^{1+}}$ strongly in
$L^2(\sides_\ell^{1+})$ as $j\to\infty$  and, thanks to the fact
that the local definition~\eqref{df:local_liftings} of the liftings eventually does
not change on $\grid_\ell^{1+}$, we have 
\begin{align*}
  \int_{\Omega_\ell^{1+}}\liftG[k_j](\jump{\pn u_{k_j}}) \colon \DG^2v
  \dx&=\int_{\Omega_\ell^{1+}}\liftG[\infty](\jump{\pn u_{k_j}}) \colon
  \DG^2v \dx
  \\
  &\to
  \int_{\Omega_\ell^{1+}}\liftG[\infty](\jump{\pn \ovu}) \colon \DG^2v \qquad\text{as $j\to\infty$.}
\end{align*}
From the estimate 
\begin{multline*}
   \Big|\int_\Omega \bigg(\DG^2 u_{k_j}-\liftG[k_j](\jump{\pn
     u_{k_j}})  - \DG^2 \ovu  + \liftG[\infty](\jump{\pn \ovu}) \bigg)
  \colon \DG^2v \dx\Big|\\
  \begin{aligned}
  & \leq    
\Big|\int_{\Omega^-} \bigg(\DG^2 u_{k_j}-\liftG[k_j](\jump{\pn
     u_{k_j}})  - \DG^2 \ovu \bigg)
  \colon \DG^2v \dx\Big| \\
& \quad +  
\Big|\int_{\Omega_\ell^{1- }  \setminus \Omega^-}  \bigg(\DG^2 u_{k_j}-\liftG[k_j](\jump{\pn
     u_{k_j}})  - \DG^2 \ovu  + \liftG[\infty](\jump{\pn \ovu}) \bigg)
  \colon \DG^2v \dx\Big| \\
& \quad +  
\Big|\int_{\Omega_\ell^{1+ }}  \bigg(\DG^2 u_{k_j}-\liftG[k_j](\jump{\pn
     u_{k_j}})  - \DG^2 \ovu  + \liftG[\infty](\jump{\pn \ovu}) \bigg)
  \colon \DG^2v \dx\Big|,
  \end{aligned}
\end{multline*}
we finally observe
that the first and third terms vanish as $ j \to \infty$, and arrive at 
\begin{align*}
   \lim_{j \to \infty} \Big|\int_\Omega \bigg(\DG^2 u_{k_j}-\liftG[k_j](\jump{\pn
     u_{k_j}})  - \DG^2 \ovu  + \liftG[\infty](\jump{\pn \ovu}) \bigg)
  \colon \DG^2v \dx\Big|< \epsilon.
\end{align*}
Since $\epsilon>0$ was chosen arbitrarily, for $j \to \infty$, we conclude 
\begin{align}\label{eq:3}
  \int_\Omega (\DG^2 u_{k_j}-\liftG[k_j](\jump{\pn
     u_{k_j}})\colon \DG^2 v \dx   \to \int_\Omega  (\DG^2 \ovu  + \liftG[\infty](\jump{\pn \ovu}) )
  \colon \DG^2v \dx
\end{align}

\boxed{2}~In order to identify the limit of $II_j$, we split the
domain
$\Omega$ according to
 \begin{align*}
   \Omega=(\Omega \setminus
 \Omega_\ell^{1+} )
\cup \Omega_\ell^{1+}
 \end{align*}
 for some $\ell\le k_j$.
Thanks to uniform boundedness 
$\enorm[k]{u_k}\Cleq \norm[\Omega]{f}$, for $\epsilon>0$, we have 
\begin{align}\label{eq:lift_k_v_D2uk}
  \Big|\int_{\Omega\setminus \Omega_\ell^{1+}} \liftG[k_j](\jump{\pn v}) \colon \DG^2 u_{k_j}
  \dx \Big| \Cleq \norm[\Omega \setminus
  \Omega_\ell^{1+}]{\liftG[k_j](\jump{\pn v})}  \norm[\Omega]{f}<\epsilon
\end{align}
for all $k_j\ge\ell\ge K(\epsilon)$. Indeed,  the stability of the lifting
operator~\eqref{est:local_liftings} together with Proposition
\ref{prop:boundedness_of_energy_norm} 
yields
\begin{align*}
\norm[\Omega
                                                 \setminus
                                                 \Omega_\ell^{1+}]{\liftG[k_j](\jump{\pn
                                                 v})}
 \Cleq \left( \int_{\sidesk[k_j]\setminus \sidesk[\ell]^{2+}} \hG[k_j]^{-1}
  \jumpn{}{v}^2   \ds\right)^{1/2}
 \to 0\qquad\text{as}~k_j\ge\ell\to\infty.
\end{align*}
As in \step{1}, on $\Omega_\ell^{1+} $ we employ the strong convergence 
$\DG^2u_{k_j}|_{\Omega_\ell^{1+}}\to \DG^2\ovu|_{\Omega_\ell^{1+}}\in
\polspace[0](\gridk[\ell]^{1+})^{2\times 2}$ in
$L^2(\Omega_\ell^{1+})$ as
$j\to\infty$, in order to obtain from the local definitions of the liftings
\eqref{df:local_liftings} and 
\eqref{eq:df_local_lifting_infty} that
\begin{align*}
  \int_{\Omega_\ell^{1+}} 
\liftG[k_j](\jump{\pn v}) \colon \DG^2 u_{k_j} \dx&=   \int_{\Omega_\ell^{1+}} 
                                              \liftG[\infty](\jump{\pn v}) \colon \DG^2 u_{k_j} \dx
  \\
                                            &\to \int_{\Omega_\ell^{1+}} 
\liftG[\infty](\jump{\pn v}) \colon \DG^2 \ovu \dx \qquad\text{as}~j\to\infty.
\end{align*}
Combining this with~\eqref{eq:lift_k_v_D2uk}~
yields
\begin{align}
\int_\Omega \liftG[k_j](\jump{\pn v}) \colon \DG^2 u_{k_j}\dx\to
\int_\Omega \liftG[\infty](\jump{\pn v}) \colon \DG^2
\ovu\dx\quad\text{as $k\to\infty$.}\label{eq:4}
\end{align}

\boxed{3}~For the last term $III_j$, we observe from
$\sides_{\ell}^+\subset\sidesk[k_j]^+$, $\ell\le k_j$, that 
\begin{align*}
    \int_{\sidesk[k_j]} \frac{\sigma}{\hG[k_j]}\jumpn{}{u_{k_j}}\jumpn{}{v
    }\ds
  &= \int_{\sidesk[\ell]^+ }
  \frac{\sigma}{\hG[k_j]}\jumpn{}{u_{k_j}}\jumpn{}{v }\ds
  \\
  &\quad+ \int_{ \sidesk[k_j]
          \setminus \sidesk[\ell]^+ }
\frac{\sigma}{\hG[k_j]}\jumpn{}{u_{k_j}}\jumpn{}{v }\ds.
\end{align*}
For the second term on the right-hand side, we conclude from
Proposition~\ref{prop:boundedness_of_energy_norm} that for arbitrary fixed
$\epsilon$ there exists $K(\epsilon)>0$  such that 
\begin{multline*}
\int_{ \sidesk[k_j]
          \setminus \sidesk[\ell]^+ }
  \frac{\sigma}{\hG[k_j]}\jumpn{}{u_{k_j}}\jumpn{}{v }\ds\\
  \begin{aligned}
    &\leq \left( \int_{ \sidesk[k_j] \setminus \sidesk[\ell]^+ }
      \frac{\sigma}{\hG[k_j]}\jumpn{}{u_{k_j}}^2\ds\right)^{1/2}\left(
      \int_{ \sidesk[k_j] \setminus \sidesk[\ell]^+ }
      \frac{\sigma}{\hG[k_j]}\jumpn{}{v}^2\ds\right)^{1/2} \\
    &\Cleq \enorm[k_j]{u_{k_j}}\left( \int_{ \sidesk[k_j] \setminus
        \sidesk[\ell]^+ }
      \frac{\sigma}{\hG[k_j]}\jumpn{}{v}^2\ds\right)^{1/2}\\
    &\Cleq
    \norm[L^2(\Omega)]{f} \left( \int_{ \sides^+ \setminus
        \sidesk[\ell]^+ }
      \frac{\sigma}{\hG[+]}\jumpn{}{v}^2\ds\right)^{1/2}
    \le \epsilon
  \end{aligned}
\end{multline*}
whenever $k_j\ge\ell\ge K(\epsilon)$. As in \step{1}, we use for fixed
$\ell$  that
$\jump{\pn u_{k_j}}|_{\sides_\ell^{1+}} \to 
\jump{\pn \ovu}|_{\sides_\ell^{1+}}$ as $j\to\infty$ strongly in $L^2(\sides_\ell^{1+})$
 and consequently
\begin{align*}
   \int_{\sidesk[\ell]^+ }
\frac{\sigma}{\hG[k_j]}\jumpn{}{u_{k_j}}\jumpn{}{v }\ds \to    \int_{\sidesk[\ell]^+ }
\frac{\sigma}{\hG[+]}\jumpn{}{\ovu}\jumpn{}{v }\ds \qquad\text{as $j\to\infty$}.
\end{align*}
Since $\epsilon>0$ was arbitrary, the desired convergence
\begin{align}\label{eq:5}
  \int_{\sidesk[k_j]} \frac{\sigma}{\hG[k_j]}\jumpn{}{u_{k_j}}\jumpn{}{v
    }\ds\to \int_{\sidesk[+]} \frac{\sigma}{\hG[+]}\jumpn{}{\ovu}\jumpn{}{v
    }\ds\qquad\text{as}~j\to\infty
\end{align}
follows from $\int_{\sidesk[+]\setminus\sidesk[\ell]^+} \frac{\sigma}{\hG[+]}\jumpn{}{\ovu}\jumpn{}{v
    }\ds \to 0$ as $\ell\to\infty$.

\boxed{4}
Finally, combining~\eqref{eq:3},~\eqref{eq:4} and~\eqref{eq:5}, we have proved 
\begin{align*}
  \bilin[k_j]{u_{k_j}}{v}
&\to \int_{\Omega^-} D^2 \ovu \colon D^2v \dx +   \int_{\Omega^+}(
  \DG^2 \ovu -\liftG[\infty](\jump{\pn \ovu})\colon \DG^2v \dx \\
&\quad + \int_{\Omega^+} \liftG[\infty](\jump{\pn v}) \colon \DG^2 \ovu
  \dx +\int_{\sides^+ }
\frac{\sigma}{\hG[+]}\jumpn{}{\ovu}\jumpn{}{v }\ds \\
&=\bilin[\infty]{\ovu}{v} \qquad \text{as }j \to \infty.
\end{align*}
Hence, by~\eqref{eq:conv_right_hand_side_bilf} we have
$\ovu=u_\infty$, thanks to  $\ovu \in \V_\infty$ and the
uniqueness of the generalised Galerkin solution~\eqref{eq:C0IP_limit}.
\end{proof}

We conclude the section by finally proving Theorem \ref{thm:u_k_to_u_infty}.

\begin{proof}[Proof of Theorem \ref{thm:u_k_to_u_infty}]
  Using the coercivity of the bilinear form,
  Corollary~\ref{cor:convergence_interpolation} and Lemma~\ref{lem:ovu=u_infty},
    and the interpolation
  operator $\ipolk u_\infty \in \V_k$, we
  observe 
  \begin{align*}
      \begin{aligned}
    C_{coer} \enorm[k]{\ipolk u_\infty -u_k}^2 &\leq  \bilin[k]{\ipolk
                                                    u_\infty
                                                    -u_k}{\ipolk
                                                    u_\infty -u_k}\\
&= \bilin[k]{\ipolk u_\infty}{\ipolk
   u_\infty}-2\bilin[k]{\ipolk u_\infty}{u_k}+\bilin[k]{u_k}{u_k}\\
    &=  \bilin[k]{\ipolk u_\infty}{\ipolk
   u_\infty}-2\scp[L^2(\Omega)]{f}{\ipolk
       u_\infty}+\scp[L^2(\Omega)]{f}{u_k}\\
    &\to
    \bilin[\infty]{u_\infty}{u_\infty}-\scp[L^2(\Omega)]{f}{u_\infty}=0 \qquad
  \text{as } k \to \infty.
      \end{aligned}
  \end{align*}
Hence, again with  Corollary~\ref{cor:convergence_interpolation}, we conclude
\begin{align*}
  \enorm[k]{ u_\infty -u_k}^2\leq \enorm[k]{\ipolk
  u_\infty -u_\infty}^2+\enorm[k]{\ipolk u_\infty -u_k}^2 \to
  0 
\end{align*} 
as $k \to \infty$.
\end{proof}

\section*{Acknowledgments}
Alexander Dominicus and Christian Kreuzer gratefully acknowledge
partial support by the DFG research grant KR 3984/5-1 ``Convergence
Analysis for Adaptive Discontinuous Galerkin Methods''.

We also thank the anonymous referees for finding a highly non-trivial
counterexample to a statement in a previous version of this article.


\newcommand{\etalchar}[1]{$^{#1}$}
\providecommand{\bysame}{\leavevmode\hbox to3em{\hrulefill}\thinspace}
\providecommand{\MR}{\relax\ifhmode\unskip\space\fi MR }
\providecommand{\MRhref}[2]{%
  \href{http://www.ams.org/mathscinet-getitem?mr=#1}{#2}
}
\providecommand{\href}[2]{#2}


\begin{thebibliography}{EGH{\etalchar{+}}02}

\bibitem[AFP00]{Ambrosio:2000}
L.~Ambrosio, N.~Fusco, and D.~Pallara, \emph{Functions of bounded
  variation and free discontinuity problems}, Oxford Mathematical Monographs,
  The Clarendon Press, Oxford University Press, New York, 2000. 

\bibitem[AFS68]{Argyris1968:The-TUBA-family}
J.~H Argyris, I.~Fried, and D.~W.~Scharpf, \emph{The tuba family of plate
  elements for the matrix displacement method}, Aero J. Roy. Aero. Soc.
  \textbf{72} (1968), 701--709.

\bibitem[Bae91]{Baensch:91}
E.~B\"{a}nsch, \emph{Local mesh refinement in {$2$} and {$3$}
  dimensions}, Impact Comput. Sci. Engrg. \textbf{3} (1991), no.~3, 181--191.
  

\bibitem[BBF13]{Boffi2013:Mixed-finite-el}
D.~Boffi, F.~Brezzi, and M.~Fortin, \emph{Mixed finite element
  methods and applications}, Springer Series in Computational Mathematics,
  vol.~44, Springer, Heidelberg, 2013. 

\bibitem[BCI65]{Bazeley1965:0.-C.-Zienkiewi}
G.P.~Bazeley, Y.K.~Cheung, and BM~Irons, O. C. Zienkiewicz, \emph{Triangular elements in bending–conforming and nonconforming solutions}, Proc. of the Conference on Matrix Methods in Structural Mechanics, Air Force Ins. Tech., Wright-Patterson A.F. Base, Ohio, 1965.

\bibitem[BGS10]{BrennerGudiSung2009}
S.~C.~Brenner, T.~Gudi, and L.-Y.~Sung, \emph{An a posteriori
  error estimator for a quadratic {$C^0$}-interior penalty method for the
  biharmonic problem}, IMA J. Numer. Anal. \textbf{30} (2010), no.~3, 777--798.
  

\bibitem[BN10]{BonitoNochetto:10}
A.~Bonito and R.~H.~Nochetto, \emph{Quasi-optimal convergence rate of
  an adaptive discontinuous {G}alerkin method}, SIAM J. Numer. Anal.
  \textbf{48} (2010), no.~2, 734--771. 

\bibitem[BO09]{BuffaOrtner:09}
A.~Buffa and C.~Ortner, \emph{Compact embeddings of broken
  {S}obolev spaces and applications}, IMA J. Numer. Anal. \textbf{29} (2009),
  no.~4, 827--855. 

\bibitem[Bre03]{Brenner:2003}
S.~C. Brenner, \emph{Poincar\'{e}-{F}riedrichs inequalities for piecewise
  {$H^1$} functions}, SIAM J. Numer. Anal. \textbf{41} (2003), no.~1, 306--324.
 

\bibitem[BS05]{BrennerSung:05}
S.~C. Brenner and L.-Y.~Sung, \emph{{$C^0$} interior penalty methods for
  fourth order elliptic boundary value problems on polygonal domains}, J. Sci.
  Comput. \textbf{22/23} (2005), 83--118. 

\bibitem[Cia74]{Ciarlet1974:Sur-lelement-de}
P.~G.~Ciarlet, \emph{Sur l'\'{e}l\'{e}ment de {C}lough et {T}ocher}, Rev.
  Fran\c{c}aise Automat. Informat. Recherche Op\'{e}rationnelle S\'{e}r. Rouge
  \textbf{8} (1974), no.~{\rm R}-2, 19--27.
  
\bibitem[CKNS08]{Cascon2008:Quasi-optimal-c}
J.~M.~Cascon, C.~Kreuzer, R.~H.~Nochetto, and K.~G.~Siebert, \emph{Quasi-optimal convergence rate for an adaptive finite element
  method}, SIAM J. Numer. Anal. \textbf{46} (2008), no.~5, 2524--2550.
  

\bibitem[Cle75]{Clement1975:Approximation-b}
Ph.~Clement, \emph{Approximation by finite element functions using local
  regularization}, Rev. Fran\c{c}aise Automat. Informat. Recherche
  Op\'{e}rationnelle S\'{e}r. \textbf{9} (1975), no.~{\rm R}-2, 77--84.

\bibitem[CNZ16]{Cockburn2016:Contraction-pro}
B.~Cockburn, R.~H. Nochetto, and W.~Zhang, \emph{Contraction
  property of adaptive hybridizable discontinuous {G}alerkin methods}, Math.
  Comp. \textbf{85} (2016), no.~299, 1113--1141. 

\bibitem[Com89]{Comodi1989:The-Hellan-Herr}
M.~I.~Comodi, \emph{The {H}ellan-{H}errmann-{J}ohnson method: some new error
  estimates and postprocessing}, Math. Comp. \textbf{52} (1989), no.~185,
  17--29. 

\bibitem[D96]{Doerfler:96}
W.~D\"{o}rfler, \emph{A convergent adaptive algorithm for {P}oisson's
  equation}, SIAM J. Numer. Anal. \textbf{33} (1996), no.~3, 1106--1124.
 

\bibitem[dB74]{Ciarlet1974}
C.~de~Boor (ed.), \emph{Mathematical aspects of finite elements in partial
  differential equations}, Academic Press [A subsidiary of Harcourt Brace
  Jovanovich, Publishers], New York-London, 1974, Publication No. 33 of the
  Mathematics Research Center, The University of Wisconsin-Madison.
 

\bibitem[dBD83]{Boor1983:Approximation-b}
C.~de~Boor and R.~DeVore, \emph{Approximation by smooth multivariate splines},
  Trans. Amer. Math. Soc. \textbf{276} (1983), no.~2, 775--788.

\bibitem[DDPS79]{DouglasDupontPercellScott:79}
J.~Douglas, Jr., T.~Dupont, P.~Percell, and R.~Scott, \emph{A family
  of {$C^{1}$} finite elements with optimal approximation properties for
  various {G}alerkin methods for 2nd and 4th order problems}, RAIRO Anal.
  Num\'{e}r. \textbf{13} (1979), no.~3, 227--255. 

\bibitem[DPE12]{DiPietroErn:12}
D.~A.~Di~Pietro and A.~Ern, \emph{Mathematical aspects of
  discontinuous {G}alerkin methods}, Math\'{e}matiques \& Applications (Berlin)
  [Mathematics \& Applications], vol.~69, Springer, Heidelberg, 2012.
  

\bibitem[DS80]{DupontScott:80}
T.~Dupont and R.~Scott, \emph{Polynomial approximation of functions in
  {S}obolev spaces}, Math. Comp. \textbf{34} (1980), no.~150, 441--463.
  

\bibitem[EGH{\etalchar{+}}02]{EngelGarikipati:2002}
G.~Engel, K.~Garikipati, T.~J.~R. Hughes, M.~G. Larson, L.~Mazzei, and R.~L.
  Taylor, \emph{Continuous/discontinuous finite element approximations of
  fourth-order elliptic problems in structural and continuum mechanics with
  applications to thin beams and plates, and strain gradient elasticity},
  Comput. Methods Appl. Mech. Engrg. \textbf{191} (2002), no.~34, 3669--3750.
  

\bibitem[FHP15]{Fraunholz:2015}
T.~Fraunholz, R.~H.~W. Hoppe, and M.~Peter, \emph{Convergence
  analysis of an adaptive interior penalty discontinuous {G}alerkin method for
  the biharmonic problem}, J. Numer. Math. \textbf{23} (2015), no.~4, 317--330.
  

\bibitem[GHV11]{GeorgoulisHoustonVirtanen:09}
E.~H.~Georgoulis, P.~Houston, and J.~Virtanen, \emph{An {\it a
  posteriori} error indicator for discontinuous {G}alerkin approximations of
  fourth-order elliptic problems}, IMA J. Numer. Anal. \textbf{31} (2011),
  no.~1, 281--298. 

\bibitem[GS02]{GiraultScott:2002}
V.~Girault and L.~R. Scott, \emph{Hermite interpolation of nonsmooth functions
  preserving boundary conditions}, Math. Comp. \textbf{71} (2002), no.~239,
  1043--1074. 

\bibitem[Gud10]{Gudi:10}
T.~Gudi, \emph{A new error analysis for discontinuous finite element
  methods for linear elliptic problems}, Math. Comp. \textbf{79} (2010),
  no.~272, 2169--2189. 

\bibitem[HHX11]{Huang2011:Convergence-of-}
J.~Huang, X.~Huang, and Y.~Xu, \emph{Convergence of an adaptive
  mixed finite element method for {K}irchhoff plate bending problems}, SIAM J.
  Numer. Anal. \textbf{49} (2011), no.~2, 574--607.

\bibitem[HKW09]{HoppeKanschatWarburton:2008}
R.~H.~W. Hoppe, G.~Kanschat, and T.~Warburton, \emph{Convergence analysis of an
  adaptive interior penalty discontinuous {G}alerkin method}, SIAM J. Numer.
  Anal. \textbf{47} (2008/09), no.~1, 534--550.

\bibitem[HL02]{HansboLarson:2002}
P.~Hansbo and M.~G. Larson, \emph{A discontinuous {G}alerkin method for the
  plate equation}, Calcolo \textbf{39} (2002), no.~1, 41--59. 

\bibitem[Joh73]{Johnson1973:On-the-converge}
C.~Johnson, \emph{On the convergence of a mixed finite-element method for
  plate bending problems}, Numer. Math. \textbf{21} (1973), 43--62.
  

\bibitem[KG18]{KreuzerGeorgoulis:17}
C.~Kreuzer and E.~H.~Georgoulis, \emph{Convergence of adaptive
  discontinuous {G}alerkin methods}, Math. Comp. \textbf{87} (2018), no.~314,
  2611--2640. 

\bibitem[KG19]{KreuzerGeorgoulis:19}
C.~Kreuzer and E.~H.~Georgoulis, \emph{Convergence of adaptive
  discontinuous {G}alerkin methods -- corrected version}, arXiv:1909.12665 (2019). 
  
\bibitem[Kos94]{Kossaczky:94}
I.~Kossaczk\'{y}, \emph{A recursive approach to local mesh refinement in two
  and three dimensions}, J. Comput. Appl. Math. \textbf{55} (1994), no.~3,
  275--288. 

\bibitem[KP07]{KarakashianPascal:07}
O.~A.~Karakashian and F. Pascal, \emph{Convergence of adaptive
  discontinuous {G}alerkin approximations of second-order elliptic problems},
  SIAM J. Numer. Anal. \textbf{45} (2007), no.~2, 641--665. 

\bibitem[LNSO04]{LewNeff:2004}
A.~Lew, P.~Neff, D.~Sulsky, and M.~Ortiz, \emph{Optimal
  {BV} estimates for a discontinuous {G}alerkin method for linear elasticity},
  AMRX Appl. Math. Res. Express (2004), no.~3, 73--106. 

\bibitem[Mau95]{Maubach:95}
J.~M.~Maubach, \emph{Local bisection refinement for {$n$}-simplicial grids
  generated by reflection}, SIAM J. Sci. Comput. \textbf{16} (1995), no.~1,
  210--227. 

\bibitem[MNS00]{MoNoSi:00}
P.~Morin, R.~H.~Nochetto, and K.~G.~Siebert, \emph{Data
  oscillation and convergence of adaptive {FEM}}, SIAM J. Numer. Anal.
  \textbf{38} (2000), no.~2, 466--488. 

\bibitem[Mor68]{Morley1968:The-triangular-}
L.~S.~D.~Morley, \emph{The triangular equilibrium element in the solution of plate
  bending problems}, The Aeronautical Quarterly \textbf{19} (1968), no.~2,
  149--169.

\bibitem[MSV08]{MorinSiebertVeeser:08}
P.~Morin, K.~G.~Siebert, and A.~Veeser, \emph{A basic convergence
  result for conforming adaptive finite elements}, Math. Models Methods Appl.
  Sci. \textbf{18} (2008), no.~5, 707--737. 

\bibitem[SH18]{Sun2018:Quasi-optimal-c}
P.~Sun and X.~Huang, \emph{Quasi-optimal convergence rate for an
  adaptive hybridizable {$C^0$} discontinuous {G}alerkin method for {K}irchhoff
  plates}, Numer. Math. \textbf{139} (2018), no.~4, 795--829.

\bibitem[Sie11]{Siebert:11}
K.~G.~Siebert, \emph{A convergence proof for adaptive finite elements
  without lower bound}, IMA J. Numer. Anal. \textbf{31} (2011), no.~3,
  947--970. 

\bibitem[SZ90]{ScottZhang:90}
L.~R.~Scott and S.~Zhang, \emph{Finite element interpolation of
  nonsmooth functions satisfying boundary conditions}, Math. Comp. \textbf{54}
  (1990), no.~190, 483--493. 

\bibitem[Ver13]{Verfurth:2013}
R.~Verf\"{u}rth, \emph{A posteriori error estimation techniques for
  finite element methods}, Numerical Mathematics and Scientific Computation,
  Oxford University Press, Oxford, 2013. 

\end{thebibliography}

\appendix
\printnomenclature[5em]
\end{document}